\title{Subspace-constrained randomized coordinate descent for linear systems with good low-rank matrix approximations}
\author{
    Jackie Lok\thanks{    
        ORFE Department,
        Princeton University
        (\texttt{jackie.lok@princeton.edu}, \texttt{elre@princeton.edu}).
    }
    \and
    Elizaveta Rebrova\footnotemark[1]
}
\date{}
\begin{document}

\maketitle

\begin{abstract}
The randomized coordinate descent (RCD) method is a classical algorithm with simple, lightweight iterations that is widely used for various optimization problems, including the solution of positive semidefinite linear systems. As a linear solver, RCD is particularly effective when the matrix is well-conditioned; however, its convergence rate deteriorates rapidly in the presence of large spectral outliers.
In this paper, we introduce the subspace-constrained randomized coordinate descent (SC-RCD) method, in which the dynamics of RCD are restricted to an affine subspace corresponding to a column Nystr\"{o}m approximation, efficiently computed using the recently analyzed RPCholesky algorithm. We prove that SC-RCD converges at a rate that is unaffected by large spectral outliers, making it an effective and memory-efficient solver for large-scale, dense linear systems with rapidly decaying spectra, such as those encountered in kernel ridge regression. Experimental validation and comparisons with related solvers based on coordinate descent and the conjugate gradient method demonstrate the efficiency of SC-RCD.
Our theoretical results are derived by developing a more general subspace-constrained framework for the sketch-and-project method. This framework, which may be of independent interest, generalizes popular algorithms such as randomized Kaczmarz and coordinate descent, and provides a flexible, implicit preconditioning strategy for a variety of iterative solvers.
\end{abstract}

\section{Introduction} \label{sec:intro}

The problem of solving large-scale systems of linear equations $\mA \vx = \vb$ is ubiquitous in machine learning and scientific computing.
The growing size of datasets presents new challenges and demands on the algorithms used. For instance, the entire matrix $\mA$ may not fit into memory, which means that memory-efficient iterative methods are practically advantageous or even necessary. Furthermore, if evaluating entries of the matrix $\mA$ is associated with a nontrivial cost, then it may also be desirable to seek algorithms that are based on accessing small parts of the matrix or are more easily parallelizable, compared to algorithms based on other primitives such as matrix--vector products.

\emph{A key motivation for this work is the development of an efficient solver for dense, positive semidefinite \emph{(psd)} linear systems that requires a limited number of entry accesses at a time.}
As a primary example, consider the problem of kernel ridge regression (KRR)~\cite{ScholkopfSmola2002}. Given $n$ data points, this is a method for nonparametric regression that is equivalent to solving a linear system $(\mK + \lambda \mI) \vx = \vy$, where $\mK \in \reals^{n \times n}$ is a positive definite kernel matrix, which has entries measuring the similarity between each pair of input feature vectors, and $\lambda \geq 0$ is a chosen regularization parameter. The matrix $\mK$ is typically dense, and the regularized system $\mK + \lambda \mI$ can be very poorly conditioned for small values of $\lambda$.
Loading the entire kernel matrix in memory requires storing $O(n^2)$ entries, and solving the linear system using a standard direct method requires $O(n^3)$ arithmetic operations; for large $n$, both can be infeasible. 

\paragraph{Good low-rank matrix approximations.}
Fortunately, matrices arising from many real-life datasets often possess fast spectral decay~\cite{UdellTownsend2019}, and so can be well-approximated by much smaller low-rank matrices.
A particularly effective method for computing a good low-rank approximation of a psd $n \times n$ matrix $\mA$ is the \emph{randomly pivoted Cholesky} (RPCholesky) algorithm, recently proposed and analyzed by Chen et al.~\cite{ChenEtAl2024}.
Given an approximation rank parameter $d \geq 0$, RPCholesky efficiently finds a near-optimal low-rank approximation of $\mA$ using only $O(d^2 n)$ arithmetic operations, $O(dn)$ entry evaluations, and $O(dn)$ storage. It can also be practically accelerated using block computations and rejection sampling~\cite{EpperlyEtAl2024rejection}.

Briefly, the RPCholesky algorithm selects a random set of $d$ pivots $\mathcal{S} \subseteq [n] := \{ 1, 2, \ldots, n \}$ using adaptive diagonal sampling, and outputs a (column) \emph{Nystr\"{o}m approximation}
\[
    \mA \langle \mathcal{S} \rangle = \mF \mF^{\tran}
\]
of $\mA$ in factorized form, where $\mF \in \reals^{n \times d}$ is a lower-triangular matrix after permuting the rows to bring the pivots $\mathcal{S}$ to the top, and the residual matrix $\mA - \mA \langle \mathcal{S} \rangle$ has trace-norm error comparable with the best rank-$r$ approximation of $\mA$ (from truncated SVD) for some $r \approx d$.
More precisely, the approximation quality is described by 

\begin{theorem}[{\cite[Theorem~5.1]{ChenEtAl2024}}] \label{thm:rpcholesky_error}
Let $\mA \in \reals^{n \times n}$ be a psd matrix with eigenvalues $\lambda_1(\mA) \geq \ldots \geq \lambda_n(\mA) \geq 0$, and denote $\log_+(t) = \max \{ \log t, 0 \}$ for $t > 0$.
Suppose that for some fixed integer $r \geq 0$ and real $\delta > 0$, the approximation rank parameter $d$ satisfies
\[
    d \geq \frac{r}{\delta} + \min\left\{ r \log\left( \frac{1}{\delta \eta_r} \right),\, r + r \log_+\left( \frac{2^r}{\delta} \right) \right\}, \quad \text{where} \quad
    \eta_r := \frac{\sum_{i > r} \lambda_i(\mA)}{\sum_{i=1}^n \lambda_i(\mA)}.
\]
Then, the rank-$d$ Nystr\"{o}m approximation $\mA \langle \mathcal{S} \rangle$ output by RPCholesky satisfies
\[
    \E[\tr(\mA - \mA \langle \mathcal{S} \rangle)] \leq (1 + \delta) \cdot \sum_{i > r} \lambda_i(\mA).
\]
\end{theorem}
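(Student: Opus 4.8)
The plan is to analyze the \emph{expected residual} produced by a single RPCholesky pivot and then iterate. Write $\mathbf{A}^{(0)} = \mA$ and let $\mathbf{A}^{(t)}$ denote the residual (Schur complement) after $t$ pivots have been selected, so that $\mA - \mA\langle\mathcal{S}\rangle = \mathbf{A}^{(d)}$ and $\E[\tr(\mA - \mA\langle\mathcal{S}\rangle)] = \E[\tr \mathbf{A}^{(d)}]$. First I would compute the one-step conditional expectation. Since RPCholesky samples a pivot $i$ with probability $\mathbf{A}^{(t)}_{ii}/\tr(\mathbf{A}^{(t)})$ and replaces $\mathbf{A}^{(t)}$ by the Schur complement $\mathbf{A}^{(t)} - \mathbf{A}^{(t)}_{:,i}\mathbf{A}^{(t)}_{i,:}/\mathbf{A}^{(t)}_{ii}$, averaging over $i$ against the sampling probabilities and using $\sum_i \mathbf{A}^{(t)}_{:,i}\mathbf{A}^{(t)}_{i,:} = (\mathbf{A}^{(t)})^2$ gives the clean formula
\[
    \E[\mathbf{A}^{(t+1)} \mid \mathbf{A}^{(t)}] = \Phi(\mathbf{A}^{(t)}), \qquad \Phi(\mathbf{M}) := \mathbf{M} - \frac{\mathbf{M}^2}{\tr(\mathbf{M})}.
\]
In particular, the expected trace drop in one step is $\tr(\mathbf{M}^2)/\tr(\mathbf{M})$, which is largest when the spectrum of $\mathbf{M}$ is dominated by a few big eigenvalues.

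Second, I would pass from the random trajectory to a deterministic one, aiming to show $\E[\mathbf{A}^{(d)}] \preceq \Phi^{\circ d}(\mA)$, the $d$-fold composition applied to $\mA$. This follows by induction from the one-step identity provided $\Phi$ is \emph{monotone} and \emph{concave} with respect to the Loewner order: monotonicity lets one compose the inductive bound, while operator-concavity lets one move the expectation inside via an operator Jensen inequality, $\E[\Phi(\mathbf{A}^{(t)})] \preceq \Phi(\E[\mathbf{A}^{(t)}])$. A matrix-analysis lemma verifying these two properties of $\mathbf{M}\mapsto \mathbf{M}^2/\tr(\mathbf{M})$ is needed here. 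The payoff is a decisive simplification: because $\Phi$ leaves the eigenvectors of its argument fixed, the eigenvalues of the deterministic iterate evolve by the \emph{scalar} recursion $\mu_i \mapsto \mu_i\bigl(1 - \mu_i/T\bigr)$ with common denominator $T = \sum_j \mu_j$, started from $\mu_i = \lambda_i$.

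Third, and this is where the bulk of the work and the main obstacle lie, I would analyze this coupled scalar recursion to show $\tr \Phi^{\circ d}(\mA) = \sum_i \mu_i^{(d)} \leq (1+\delta)\sum_{i>r}\lambda_i$ under the stated bound on $d$. I would split the spectrum (by fixed eigenvector) into a head ($i \leq r$) and a tail ($i > r$). Since $\mu_i\bigl(1-\mu_i/T\bigr)\leq \mu_i$, each eigenvalue is nonincreasing, so the tail already contributes at most $\sum_{i>r}\lambda_i$; it remains to drive the head mass $\sum_{i\leq r}\mu_i^{(d)}$ below $\delta \sum_{i>r}\lambda_i$. A large head eigenvalue contracts geometrically, but the factor $1-\mu_i/T$ degrades as the head eigenvalues equalize and as the slowly shrinking tail inflates the shared denominator $T$; this coupling through $T$ is exactly what blocks a one-line geometric-series estimate and forces a careful amortized, potential-function accounting. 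Tracking two regimes separately — an initial phase that rapidly suppresses the head, and a final phase tuned to the target accuracy — should produce the $r/\delta$ term (the precision cost) together with the logarithmic terms involving $\eta$ and $2^r/\delta$ (the cost of geometric equalization), yielding the hypothesized budget on $d$. The central difficulty throughout is handling the nonlinear, eigenvalue-coupling denominator $T$ rigorously rather than heuristically.
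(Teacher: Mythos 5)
This statement is imported verbatim from \cite[Theorem~5.1]{ChenEtAl2024}; the present paper gives no proof of its own, so the relevant comparison is with Chen et al.'s argument, and your sketch follows it essentially step for step: the one-step expected-residual identity $\E[\mathbf{A}^{(t+1)} \mid \mathbf{A}^{(t)}] = \Phi(\mathbf{A}^{(t)})$ with $\Phi(\mathbf{M}) = \mathbf{M} - \mathbf{M}^2/\tr(\mathbf{M})$, the Loewner monotonicity and concavity of $\Phi$ yielding $\E[\mathbf{A}^{(d)}] \preceq \Phi^{\circ d}(\mA)$ via induction and an operator Jensen inequality, and the reduction to the coupled scalar eigenvalue recursion $\mu_i \mapsto \mu_i(1 - \mu_i/T)$ analyzed by a head/tail split. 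Your third stage is left as a plan rather than a proof, but it correctly identifies both the structure of the bound (the $r/\delta$ precision term plus the logarithmic equalization terms giving the two branches of the $\min$) and the genuine difficulty (the coupling of head and tail through the common denominator $T$), which is exactly where the technical work in \cite{ChenEtAl2024} resides.
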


Among other applications, matrix approximation techniques based on RPCholesky have recently been used to build efficient preconditioners for conjugate gradient-based linear solvers for KRR~\cite{DiazEtAl2023robust}.

\paragraph{Randomized block coordinate descent.}
A convenient and widely-used optimization approach that requires a limited number of local entry accesses is the coordinate descent method~\cite{LeventhalLewis2010, Nesterov2012}.
For solving the psd system $\mA \vx = \vb$, the algorithm has basic iterations of the form
\begin{equation} \label{eq:rcd_update}
    \vx^{k+1} = \vx^k - \frac{(\mA_{:,j})^{\tran} \vx^k - \vb_j}{\mA_{j,j}} \ve_j,
\end{equation}
where $\ve_j \in \reals^n$ denotes the $j$th standard basis vector in $\reals^n$.
Observe that the updates~\eqref{eq:rcd_update} are very lightweight, requiring only access to a single column of $\mA$ and $O(n)$ arithmetic operations.
Leventhal and Lewis~\cite{LeventhalLewis2010} showed that if the coordinate $j \in [n]$ is sampled with probability $\mA_{j,j} / \tr(\mA)$, then the corresponding \emph{randomized coordinate descent} (RCD) method converges linearly with a rate that depends on the spectrum of $\mA$. More precisely, if $\vx^*$ is any solution of $\mA \vx = \vb$ and $\lambda_{\mathrm{min}}^+(\mA)$ is the smallest non-zero eigenvalue of $\mA$, then
\begin{equation} \label{eq:rcd_conv_bdd}
    \E \norm{\vx^k - \vx^*}_{\mA}^2 \leq \left( 1 - \frac{\lambda^+_{\mathrm{min}}(\mA)}{\tr(\mA)} \right)^k \cdot \norm{\vx^0 - \vx^*}_{\mA}^2,
\end{equation}
where $\norm{\vz}_{\mA} = \sqrt{\vz^{\tran} \mA \vz}$ is the norm induced by $\mA$. The iteration~\eqref{eq:rcd_update} readily generalizes to the \emph{block coordinate descent} update
\begin{equation} \label{eq:block_rcd_update}
    \vx^{k+1} = \vx^k - \ve_{\mathcal{J}} (\mA_{\mathcal{J}, \mathcal{J}})^{\dagger} \bigl( (\mA_{:, \mathcal{J}})^{\tran} \vx^k - \vb_{\mathcal{J}} \bigr),
\end{equation}
where $\ve_{\mathcal{J}} \in \reals^{n \times |\mathcal{J}|}$ denotes the matrix whose columns are the standard basis vectors in $\reals^n$ corresponding to a subset $\mathcal{J} \subseteq [n]$ (i.e., only the coordinates in $\mathcal{J}$ are updated).
However, with block size $\ell = |\mathcal{J}| > 1$, bounds on the convergence rate have only been obtained in special cases: e.g., if given a well-conditioned partitioning of the columns~\cite{NeedellTropp2014, WuNeedell2018}, or if the blocks $\mathcal{J}$ are sampled with a special distribution (which is not easy to compute)~\cite{rodomanov2020block, mutny2020block}.

The convergence rate bound~\eqref{eq:rcd_conv_bdd} implies that if the matrix $\mA$ is very well-conditioned, in the sense that $\lambda^+_{\mathrm{min}}(\mA)$ is of the same order as the maximum eigenvalue and so $\lambda^+_{\mathrm{min}}(\mA) / \tr(\mA) = O(n)$, then $O(n \log(1 / \epsilon))$ iterations and $O(n^2 \log(1 / \epsilon))$ arithmetic operations suffice to compute an approximate solution with relative error in $\norm{\cdot}_{\mA}$ bounded by $\epsilon$.
However, if $\mA$ is not so well-conditioned, then the bound rapidly deteriorates and RCD can easily be demonstrated to converge very slowly. 
Recent works such as \cite{DerezinskiEtAl2024fine, DerezinskiEtAl2025beyond} show that properly designed block generalizations of RCD can produce competitive linear solvers with implicit preconditioning properties for systems with large spectral outliers.

\paragraph{Key question.}
Motivated by these observations, we focus on the following question: \emph{Can we use an efficiently-computable low-rank approximation to improve the rate of convergence for solving a psd system $\mA \vx = \vb$ with a lightweight, memory-efficient iterative algorithm such as randomized block coordinate descent, especially when $\mA$ exhibits rapid spectral decay?}

\subsection{Notation}

We write vectors and matrices in boldface. We denote the identity matrix by $\mI$, and the Moore--Penrose pseudoinverse of a matrix $\mA$ by $\mA^{\dagger}$.
Given a subset $\mathcal{J} \subseteq [n] := \{ 1, 2, \ldots, n \}$, we denote the column submatrix of $\mA \in \reals^{n \times n}$ indexed by $\mathcal{J}$ by $\mA_{:, \mathcal{J}}$. Similarly, we indicate the row and principal submatrix indexed by $\mathcal{J}$ by $\mA_{\mathcal{J},:}$ and $\mA_{\mathcal{J}, \mathcal{J}}$ respectively.
Furthermore, we write $\ve_{\mathcal{J}} \in \reals^{n \times |\mathcal{J}|}$ to denote the matrix whose columns are the standard basis vectors in $\reals^n$ corresponding to the coordinates in $\mathcal{J}$.
We denote the eigenvalues of a symmetric matrix $\mA \in \reals^{n \times n}$ in non-increasing order by $\lambda_1(\mA) \geq \ldots \geq \lambda_{n}(\mA) = \lambda_{\mathrm{min}}(\mA)$, and the smallest non-zero eigenvalue by $\lambda_{\mathrm{min}}^+(\mA)$.
We use the Loewner order $\mA \preceq \mB$ for symmetric matrices $\mA$, $\mB$ to mean that $\mB - \mA$ is positive semidefinite (psd).
Given a psd matrix $\mA \succeq \mzero$, the $\mA$-(semi)norm is defined by $\norm{\vz}_{\mA} = \sqrt{\vz^{\tran} \mA \vz}$.

\subsection{Main results} \label{sec:main_results}

In this section, we will describe an efficient algorithm based on randomized block coordinate descent that provides a solution to our key question. We will also describe a more general subspace-constrained framework that we developed to obtain our theoretical results.

\paragraph{\textit{I. Subspace-constrained randomized coordinate descent (SC-RCD)}.}
We show that a column Nystr\"{o}m approximation $\mA \langle \mathcal{S} \rangle := \mA_{:,\mathcal{S}} (\mA_{\mathcal{S},\mathcal{S}})^{\dagger} \mA_{\mathcal{S},:}$ of the psd matrix $\mA \in \reals^{n \times n}$ can be combined with randomized block coordinate descent by constraining the iterates $\vx^k$ of RCD within the following affine subspace, parameterized by the pivot set $\mathcal{S} \subseteq [n]$:
\[
    \vx^k \in \{ \vx \in \reals^n: \mA_{\mathcal{S},:} \vx = \vb_{\mathcal{S}} \} \quad \text{for all } k \geq 0.
\]
We show that when the blocks $\mathcal{J} \subseteq [n]$ in each iteration consist of $\ell$ coordinates independently sampled with probability proportional to the diagonal of the \emph{residual matrix}
\begin{equation}\label{eq:a-circ-def}
    \mA^{\circ} := \mA - \mA \langle \mathcal{S} \rangle,
\end{equation}
then the convergence rate depends on the spectrum of $\mA^{\circ}$ instead of $\mA$.
That is, \emph{the restriction of the dynamics to the subspace corresponding to a given low-rank approximation effectively preconditions the system}, and a significant improvement in the convergence rate can be achieved when a good low-rank matrix approximation is efficiently computable, using an algorithm such as RPCholesky to select $\mathcal{S}$.

\paragraph{The SC-RCD method.}
Given an approximation rank parameter $d \geq 1$ and block size parameter $\ell \geq 1$, the resulting algorithm, which we refer to as \emph{subspace-constrained randomized coordinate descent} (SC-RCD)\footnote{If the block size $\ell > 1$, then the updates of the algorithm are more accurately described as randomized \emph{block} coordinate descent, but we will use the same acronym SC-RCD for brevity.}, can be described as follows:

\smallskip

\begin{itemize}[leftmargin=2em]
    \item A rank-$d$ Nystr\"{o}m approximation $\mA \langle \mathcal{S} \rangle = \mF \mF^{\tran}$ with corresponding pivots $\mathcal{S}$ is efficiently computed using an algorithm such as RPCholesky, and an initial iterate $\vx^0$ is obtained by solving $\mA_{\mathcal{S},:} \vx^0 = \vb_{\mathcal{S}}$.
    Let $\mC := (\mA_{\mathcal{S},\mathcal{S}})^{\dagger} \mA_{\mathcal{S},:} \in \reals^{d \times n}$.

    \smallskip
    
    \item In the $(k+1)$st iteration, given the iterate $\vx^k$ and corresponding residual vector $\vr^k = \mA \vx^k - \vb$, we form a random subset $\mathcal{J} = \{ j_1, \ldots, j_\ell \} \subseteq [n] \setminus \mathcal{S}$ of $\ell$ coordinates, each sampled independently with probability proportional to the corresponding diagonal entry of the residual matrix $\mA^{\circ} = \mA - \mA \langle \mathcal{S} \rangle$. Then, we compute
    \[
        \valpha^k = (\mA^{\circ}_{\mathcal{J}, \mathcal{J}})^{\dagger} \vr^k_{\mathcal{J}} \in \reals^\ell 
        \quad\text{and}\quad
        \vbeta^k = \mC_{:, \mathcal{J}} \valpha^k \in \reals^d,
    \]
    and use these vectors to update the coordinates of $\vx^k$ in $\mathcal{J}$ and $\mathcal{S}$:
    \begin{equation} \label{eq:sc-rcd_update_intro}
        \vx^{k+1}_{\mathcal{J}} = \vx^{k}_{\mathcal{J}} - \valpha^k,
        \quad
        \vx^{k+1}_{\mathcal{S}} = \vx^{k}_{\mathcal{S}} + \vbeta^k.
    \end{equation}
    The other coordinates of $\vx^k$ remain unchanged. Furthermore, the residual vector is updated by 
    \begin{equation} \label{eq:sc-rcd_update_resid_intro}
        \vr^{k+1} = \vr^k - \mA^{\circ}_{:, \mathcal{J}} \valpha^k.
    \end{equation}
\end{itemize}

\begin{remark}
If we denote a solution of the system $\mA \vx = \vb$ by $\vx^*$, then it can be shown that given $\vx^k$, the next iterate $\vx^{k+1}$ produced by the update~\eqref{eq:sc-rcd_update_intro} satisfies
\begin{equation} \label{eq:sc-rcd_descent_interpretation}
    \vx^{k+1} = \argmin_{\vx \in \reals^n}
    \norm{\vx - \vx^*}_{\mA}
    \quad \text{such that} \quad \vx = \vx^k + \ve_{\mathcal{J}} \valpha + \ve_{\mathcal{S}} \vbeta,
\end{equation}
where only $\valpha \in \reals^{\ell}$ and $\vbeta \in \reals^d$ are free to vary, i.e., we minimize the $\mA$-norm error with respect to the coordinates in $\mathcal{J}$ and $\mathcal{S}$.
Without the subspace constraint, we would only optimize over the coordinates in the sampled subset $\mathcal{J}$, which corresponds to the usual block coordinate descent update~\eqref{eq:block_rcd_update}.
With the additional subspace constraint, the coordinates in $\mathcal{S}$ must also be modified to ensure that the iterates continue to satisfy $\mA_{\mathcal{S},:} \vx = \vb_{\mathcal{S}}$.
What distinguishes the iteration-dependent subset $\mathcal{J}$ from the fixed subset $\mathcal{S}$ is that we know that $\vx^k$ satisfies $\mA_{\mathcal{S},:} \vx^k = \vb_{\mathcal{S}}$ a priori, which allows for the update for $\mathcal{S}$ to be computed more efficiently using the update for $\mathcal{J}$.
\end{remark}

\begin{algorithm}[!htb]
\caption{Subspace-constrained randomized coordinate descent (SC-RCD)} \label{alg:sc-rcd}
\begin{algorithmic}[1]
    \Require{Psd matrix $\mA \in \reals^{n \times n}$, vector $\vb \in \reals^{n}$, approximation rank $d$, block size $\ell$}
    \Ensure{Approximate solution $\vx \in \reals^n$ of $\mA \vx = \vb$, residual vector $\vr = \mA \vx - \vb \in \reals^n$}
    \State Compute pivot set $\mathcal{S} \subseteq [n]$ and partial pivoted Cholesky factor $\mF \in \reals^{n \times d}$ defining the Nystr\"{o}m approximation $\mA \langle \mathcal{S} \rangle = \mF \mF^{\tran}$  \Comment{E.g., RPCholesky~\cite{ChenEtAl2024, EpperlyEtAl2024rejection}}
    \State Compute $\vbeta \leftarrow (\mF_{\mathcal{S},:})^{-\tran} (\mF_{\mathcal{S},:})^{-1} \vb_{\mathcal{S}} \in \reals^{d}$  \Comment{Triangular solves with $\mF$} \label{alg:sc-rcd_initial_solve0}
    \State Set $\vx \leftarrow \vzero_{n \times 1}$, $\vx_{\mathcal{S}} \leftarrow \vbeta$, and $\vr \leftarrow \mA_{:,\mathcal{S}} \vbeta - \vb \in \reals^n$  \Comment{Initial iterate (\ref{rmk:sc-rcd_implementation_initial})} \label{alg:sc-rcd_initial_solve}
    \State Compute $\mC \leftarrow (\mF_{\mathcal{S},:})^{-\tran} \mF^{\tran} \in \reals^{d \times n}$  \Comment{Triangular solves with $\mF$ (\ref{rmk:sc-rcd_implementation_compute_B})} \label{alg:sc-rcd_compute_B}
    \State Set $\vp \leftarrow \vzero_{n \times 1}$, compute $\vp_j \leftarrow \mA_{j,j} - \mF_{j,:} (\mF_{j,:})^{\tran}$ for $j \in [n] \setminus \mathcal{S}$, and normalize: $\vp \leftarrow \vp / \sum_{j} \vp_j$
    \For{$k = 1, 2, \ldots$}
        \State Sample $\mathcal{J} = \{ j_1, \ldots, j_\ell \}$ with $j_1, \ldots, j_\ell \sim \vp$ i.i.d.  \Comment{Possibly without replacement (\ref{rmk:sc-rcd_implementation_sampling_withoutrep})} \label{alg:sc-rcd_sampling_step}
        \State Solve $\bigl( \mA_{\mathcal{J}, \mathcal{J}} - \mF_{\mathcal{J},:} (\mF_{\mathcal{J},:})^{\tran} \bigr) \valpha = \vr_{\mathcal{J}}$ for $\valpha \in \reals^\ell$  \Comment{Possibly inexactly (\ref{rmk:sc-rcd_implementation_inexact})} \label{alg:sc-rcd_projection_step}
        \State $\vbeta \leftarrow \mC_{:, \mathcal{J}} \valpha \in \reals^d$  \label{alg:sc-rcd_constraint_step}
        \State $\vx_{\mathcal{J}} \leftarrow \vx_{\mathcal{J}} - \valpha$, $\vx_{\mathcal{S}} \leftarrow \vx_{\mathcal{S}} + \vbeta$  \Comment{Update coordinates of iterate in $\mathcal{J}$ and $\mathcal{S}$}
        \State $\vr \leftarrow \vr - \bigl( \mA_{:, \mathcal{J}} \valpha - \mF ((\mF_{\mathcal{J},:})^{\tran} \valpha) \bigr)$  \Comment{Update residual vector} \label{alg:sc-rcd_residual}
    \EndFor
\end{algorithmic}
\end{algorithm}

See Algorithm~\ref{alg:sc-rcd} for pseudocode for an implementation of SC-RCD. The comments provide pointers to parts of Section~\ref{sec:sc-rcd_implementation}, where more details on efficient implementation (such as taking advantage of the lower triangular structure of $\mF_{\mathcal{S},:}$) and computational costs are discussed.

While the updates in~\eqref{eq:sc-rcd_update_intro} have been described in terms of the residual matrix $\mA^{\circ}$, we emphasize that neither $\mA$ nor $\mA^{\circ}$ actually need to be stored in memory.
We only assume that the auxiliary matrix $\mC \in \reals^{d \times n}$ is precomputed and stored, which is feasible if $d \ll n$.
Each iteration of SC-RCD only needs to access the entries $\mA_{:, \mathcal{J}} \in \reals^{n \times \ell}$ in the $\ell$ columns indexed by the sampled subset $\mathcal{J}$.
Furthermore, each iteration can be performed in $O((\ell + d) n)$ arithmetic operations, provided that the block size $\ell$ is at most $O(\sqrt{n})$.

\paragraph{SC-RCD convergence analysis.}
Conditional on the quality of the low-rank approximation output by the RPCholesky algorithm, we show that the SC-RCD method converges linearly in expectation with a rate that depends on the eigenvalues of $\mA$ without the largest spectral outliers:

\begin{theorem} \label{thm:sc-rcd_conv_main}
Let $\mA \vx = \vb$ be a consistent linear system with psd $\mA \in \reals^{n \times n}$ and solution $\vx^*$. Let the eigenvalues of $\mA$ be $\lambda_1(\mA) \geq \dots \geq \lambda_n(\mA) \geq 0$ with smallest non-zero eigenvalue $\lambda_{\mathrm{min}}^+(\mA)$.
Let $\mA \langle \mathcal{S} \rangle$ be a rank-$d$ Nystr\"{o}m approximation computed using the RPCholesky algorithm, and $\{ \vx^k \}_{k \geq 0}$ denote the corresponding sequence of SC-RCD iterates with block size $\ell \geq 1$.
Fix an integer $r \geq 0$ and real $\delta > 0$, $\rho \in (0, 1)$. If the approximation rank $d$ satisfies
\begin{equation} \label{eq:sc-rcd_conv_main_d_cond}
    d \geq \frac{r}{\delta} + r \log\left( \frac{1}{\delta \eta_r} \right), \quad \text{where} \quad
    \eta_r := \frac{\sum_{i > r} \lambda_i(\mA)}{\sum_{i=1}^n \lambda_i(\mA)},
\end{equation}
then, with probability at least $1 - \rho$, the residual matrix $\mA^{\circ} = \mA - \mA \langle \mathcal{S} \rangle$ satisfies
\begin{equation} \label{eq:sc-rcd_conv_main_eventE}
    \tr(\mA^{\circ}) \leq \rho^{-1} (1 + \delta) \sum_{i > r} \lambda_i(\mA).
\end{equation}
Furthermore, conditional on the event that~\eqref{eq:sc-rcd_conv_main_eventE} holds, denoted by $\mathcal{E}$, the expected relative error in the $\mA$-norm satisfies
\begin{equation} \label{eq:sc-rcd_conv_main_rate}
    \E\left[ \norm{\vx^k - \vx^*}_{\mA}^2 \mid \mathcal{E} \right] \leq \left( 1 - \frac{\lambda^+_{\mathrm{min}}(\mA)}{\rho^{-1} (1 + \delta) \sum_{i > r} \lambda_i(\mA)} \right)^{k \ell} \cdot \norm{\vx^0 - \vx^*}_{\mA}^2.
\end{equation}
\end{theorem}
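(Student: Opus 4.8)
The plan is to decouple the statement into two independent pieces that are glued together by conditioning. The first piece is purely about the quality of the RPCholesky approximation and is essentially a Markov-inequality wrapper around Theorem~\ref{thm:rpcholesky_error}. The second piece is the deterministic-given-$\mathcal{S}$ convergence rate of the subspace-constrained dynamics, which the excerpt attributes to a forthcoming Theorem~\ref{thm:sc-rcd_convrate}; I would invoke that result as a black box and only need to verify that its rate specializes correctly to the claimed bound~\eqref{eq:sc-rcd_conv_main_rate}.

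\textbf{Step 1: the high-probability trace bound.} Under hypothesis~\eqref{eq:sc-rcd_conv_main_d_cond}, the rank condition is (at least) as strong as the one in Theorem~\ref{thm:rpcholesky_error} with the first term in the $\min$, so that theorem gives $\E[\tr(\mA^{\circ})] \leq (1+\delta)\sum_{i>r}\lambda_i(\mA)$. Since $\mA^{\circ}=\mA-\mA\langle\mathcal{S}\rangle$ is psd (a Schur-complement/Nystr\"om residual is psd), its trace is a nonnegative random variable, so Markov's inequality yields
\[
    \prob{\tr(\mA^{\circ}) > \rho^{-1}(1+\delta)\textstyle\sum_{i>r}\lambda_i(\mA)} \leq \rho.
\]
Defining $\mathcal{E}$ to be the complementary event establishes~\eqref{eq:sc-rcd_conv_main_eventE} with probability at least $1-\rho$. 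I should flag that the whole statement is conditioned on $\mathcal{E}$, so the event must only depend on the (random) pivot set $\mathcal{S}$ produced by RPCholesky, not on the later coordinate sampling; this is consistent because $\mA^{\circ}$ is a deterministic function of $\mathcal{S}$.

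\textbf{Step 2: the conditional convergence rate.} Conditioning on $\mathcal{E}$ fixes $\mathcal{S}$ (more precisely, restricts to pivot sets satisfying the trace bound), hence fixes the residual matrix $\mA^{\circ}$. I would then apply the per-$\mathcal{S}$ convergence theorem (Theorem~\ref{thm:sc-rcd_convrate}), which should give a single-iteration contraction of the form
\[
    \E\norm{\vx^{k+1}-\vx^*}_{\mA}^2 \leq \Bigl(1 - \tfrac{\lambda^+_{\mathrm{min}}(\mA)}{\tr(\mA^{\circ})}\Bigr)\norm{\vx^k-\vx^*}_{\mA}^2
\]
for block size $\ell=1$, with the $\ell>1$ case contributing an exponent of $\ell$ per iteration (reflecting that each block-$\ell$ step behaves like $\ell$ coordinate steps in the rate bound, hence the $k\ell$ exponent in~\eqref{eq:sc-rcd_conv_main_rate}). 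The key point is that the smallest-nonzero-eigenvalue in the numerator is $\lambda^+_{\mathrm{min}}(\mA)$ and \emph{not} of $\mA^{\circ}$: the subspace constraint means the effective operator on the residual space retains $\mA$'s bottom eigenvalue while the normalization (coming from the coordinate sampling proportional to $\mathrm{diag}(\mA^{\circ})$) contributes $\tr(\mA^{\circ})$ in the denominator. Substituting the bound $\tr(\mA^{\circ}) \leq \rho^{-1}(1+\delta)\sum_{i>r}\lambda_i(\mA)$ from $\mathcal{E}$ (which only decreases the contraction factor, since a larger trace gives slower convergence) and iterating $k$ times gives~\eqref{eq:sc-rcd_conv_main_rate}.

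\textbf{Main obstacle.} The genuinely substantive mathematics lives inside Theorem~\ref{thm:sc-rcd_convrate}, which this result merely packages; assuming that theorem, the proof here is a short composition. The one subtle point I would be careful about is the direction of the inequality when substituting the trace bound: because $\tr(\mA^{\circ})$ appears in a denominator that is subtracted from $1$, replacing it by the larger quantity $\rho^{-1}(1+\delta)\sum_{i>r}\lambda_i(\mA)$ makes the contraction factor larger (closer to $1$), which correctly preserves the upper bound on the error. I would also double-check that the conditional expectation $\E_{\mathcal{E}}$ is taken only over the coordinate randomness with $\mathcal{S}$ frozen, so that the tower property cleanly separates the two sources of randomness and no independence assumption is silently needed.
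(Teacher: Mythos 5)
Your overall architecture is exactly the paper's: a Markov-inequality wrapper around Theorem~\ref{thm:rpcholesky_error} to define the event $\mathcal{E}$ (noting, correctly, that $\mathcal{E}$ depends only on the pivot set $\mathcal{S}$), then the per-$\mathcal{S}$ convergence rate of Theorem~\ref{thm:sc-rcd_convrate} with $\mathcal{S}$ frozen, then substitution of the trace bound with the right direction check. However, there is one genuine inaccuracy in Step~2: you misquote the black box. Theorem~\ref{thm:sc-rcd_convrate} gives, for a fixed pivot set,
\[
    \E \norm{\vx^k - \vx^*}_{\mA}^2 \leq \left( 1 - \frac{\lambda_{\mathrm{min}}^+(\mA^{\circ})}{\tr(\mA^{\circ})} \right)^{k \ell} \cdot \norm{\vx^0 - \vx^*}_{\mA}^2,
\]
with the smallest non-zero eigenvalue of the \emph{residual matrix} $\mA^{\circ}$ in the numerator, not $\lambda^+_{\mathrm{min}}(\mA)$. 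Your stated "key point" --- that the subspace constraint makes the effective operator "retain $\mA$'s bottom eigenvalue" --- is not what the theorem says, and as a description it is inaccurate: the effective operator is $\mA^{\circ} = \mA^{1/2} \mP \mA^{1/2}$, whose bottom non-zero eigenvalue can be strictly larger than $\lambda^+_{\mathrm{min}}(\mA)$. Anyone executing your proof literally would invoke the theorem and find a mismatch with the claimed bound.

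The missing bridge is the inequality $\lambda^+_{\mathrm{min}}(\mA^{\circ}) \geq \lambda^+_{\mathrm{min}}(\mA)$, which the paper supplies explicitly (just before the proof of Theorem~\ref{thm:sc-rcd_conv_main}) as a consequence of $\mA \langle \mathcal{S} \rangle \succeq \mzero$ (Remark~\ref{rmk:nystrom_properties}); note this is not a bare Loewner-monotonicity argument, since $\mA^{\circ} \preceq \mA$ pushes the ordered eigenvalues in the \emph{wrong} direction --- one uses that the non-zero spectra of $\mA^{\circ} = \mA^{1/2} \mP \mA^{1/2}$ and $\mP \mA \mP$ coincide and that the relevant eigenvectors lie in $\range(\mP \mA^{1/2})$, orthogonal to $\nullspace(\mA)$. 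With this one-line bridge inserted, both substitutions go the right way --- replacing $\lambda^+_{\mathrm{min}}(\mA^{\circ})$ by the smaller $\lambda^+_{\mathrm{min}}(\mA)$ and $\tr(\mA^{\circ})$ by the larger $\rho^{-1}(1+\delta) \sum_{i > r} \lambda_i(\mA)$ both increase the contraction factor --- and your argument then coincides with the paper's proof. (You correctly flagged the denominator substitution; the numerator substitution needs the same flag and is the step you omitted.)
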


The proof of Theorem~\ref{thm:sc-rcd_conv_main} is given in Section~\ref{sec:sc-rcd}.
More generally, we show that if the subspace constraint is defined by an arbitrary Nystr\"{o}m approximation $\mA \langle \mathcal{S} \rangle$, not necessarily computed using RPCholesky, then the SC-RCD algorithm converges linearly with a rate that depends on the spectrum of the residual matrix $\mA^{\circ} = \mA - \mA \langle \mathcal{S} \rangle$ (see Theorem~\ref{thm:sc-rcd_convrate}).

\begin{remark}[Randomized low-rank approximation] \label{rmk:boosting_prob}
The probability of the event $\mathcal{E}$ in~\eqref{eq:sc-rcd_conv_main_eventE}, which ensures the quality of the randomized low-rank approximation for the iterative phase, can be boosted by the standard trick of running RPCholesky $T \geq 1$ times and choosing the best approximation with the smallest trace-norm. Then, by applying~\eqref{eq:sc-rcd_conv_main_eventE} to each run with probability parameter $\rho = 1/2$, say, we conclude that the best approximation fails to satisfy the same bound in $\mathcal{E}$ with probability at most $2^{-T}$.
Note that this procedure is very easy to run in parallel.
\end{remark}

\begin{remark}[Approximation rank] \label{rmk:on_lowrank_approx}
A technical caveat is the dependence of the approximation rank $d$ on $\eta_r$, the relative trace-norm error of the best rank-$r$ approximation, in~\eqref{eq:sc-rcd_conv_main_d_cond}. Theoretically, it is possible to compute a randomized rank-$d$ Nystr\"{o}m approximation $\widehat{\mA}$ such that
\[
    \E[\tr(\mA - \widehat{\mA})] \leq (1 + \delta) \sum_{i > r} \lambda_i(\mA)
    \quad \text{as long as } d \geq \frac{r}{\delta} + r - 1,
\]
instead of~\eqref{eq:sc-rcd_conv_main_d_cond}, by using a more computationally expensive method for sampling the pivots $\mathcal{S}$ known as \emph{determinantal point process} (DPP) \emph{sampling} (see~\cite[Table~2]{ChenEtAl2024} and the references therein).
Using Markov's inequality to define the event $\mathcal{E}$ in~\eqref{eq:sc-rcd_conv_main_eventE}, the same convergence rate~\eqref{eq:sc-rcd_conv_main_rate} holds with such an approximation.
\end{remark}

\begin{remark}[Block size and sampling] \label{rmk:sc-rcd_blocksize}
Choosing a larger block size $\ell$ for the SC-RCD method means that a heavier $\ell \times \ell$ inner linear system has to be solved (directly or iteratively) in each iteration. This increased cost may be offset by two benefits.
The first is that modern computational architectures can realize computational benefits from using block computations due to parallelism, caching, etc.\ (e.g., see~\cite[App.\ A]{EpperlyEtAl2024rejection}).
The second is that larger blocks results in more progress to be made with each projection; this is reflected by the bound in Theorem~\ref{thm:sc-rcd_conv_main}, which shows that the iteration complexity improves at least linearly in the block size $\ell$. Later, we also show that the entries in each block can be sampled without replacement to obtain a guarantee that is at least as good (see Remark~\ref{rmk:sampling_without_replacement}).

However, we expect (and empirically observe) the actual improvement from projections onto bigger blocks to be even better due to the implicit ``low-rank approximation effect'': e.g., rigorous results showing more precise rates depending on the entire spectrum are known for blocks obtained using DPP sampling~\cite{mutny2020block, rodomanov2020block, XiangXieZhang2025} or subgaussian sketches~\cite{derezinski2024sharp}.
Proving tighter bounds with computationally cheap schemes is more challenging. In a recent line of work, it has been shown that DPP sampling can be approximated by uniform sampling if the input matrix satisfies some incoherence properties~\cite{DerezinskiYang2024, DerezinskiEtAl2024fine, DerezinskiEtAl2025beyond}. When the entries in each block are uniformly sampled in SC-RCD, an analogous result as Theorem~\ref{thm:sc-rcd_conv_main} also holds (see Proposition~\ref{prop:sc-rcd_convrate_unif}).
\end{remark}

\paragraph{SC-RCD complexity.}
Theorem~\ref{thm:sc-rcd_conv_main} shows that the convergence of SC-RCD depends on the approximation rank $d$, the block size $\ell$, and on the \emph{normalized tail condition number} of $\mA$, defined as 
\begin{equation} \label{eq:normalized_demmel_condnum}
    \bar{\kappa}_{r}(\mA) := \frac{1}{n - r} \frac{\sum_{i > r} \lambda_i(\mA)}{\lambda^+_{\mathrm{min}}(\mA)}, \quad \text{ for $r < \mathrm{rank}(\mA)$.}
\end{equation}
Note that $\bar\kappa_r(\mA)$ lower bounds the classical condition number of $\mA - \mA_r$, where $\mA_r$ is the best rank-$r$ approximation of $\mA$ in the Frobenius norm; i.e., $\lambda_{r+1}(\mA) / \lambda^+_{\mathrm{min}}(\mA)$.
When implemented as Algorithm~\ref{alg:sc-rcd}, the SC-RCD method satisfies the following complexity result in terms of the size of the system $n$ and the parameters $d$, $\ell$, and $\bar{\kappa}_{r}(\mA)$, which is proved in Section~\ref{sec:sc-rcd_implementation}.

\begin{theorem} \label{thm:sc-rcd_flat}
Let $\mA \vx = \vb$ be a consistent linear system with psd $\mA \in \reals^{n \times n}$ and solution $\vx^*$. The SC-RCD method (Algorithm~\ref{alg:sc-rcd}) with approximation rank $d \geq 0$ and block size $\ell \geq 1$, combined with the boosting procedure described in Remark~\ref{rmk:boosting_prob}, computes an approximate solution $\vx^k$ that satisfies $\E \norm{\vx^k - \vx^*}_{\mA}^2 \leq \epsilon \cdot \norm{\vx^0 - \vx^*}_{\mA}^2$ after $k = \lceil 4 (n / \ell) \bar{\kappa}_r(\mathbf{A}) \log(2/\epsilon) \rceil$ iterations.
In total,
\begin{equation} \label{eq:thm:sc-rcd_flat_complexity}
    O\bigl( n d^2 \log(1/\epsilon) \bigr) + O\left( \biggl( \frac{n^2(d + \ell)}{\ell} + n \ell^2 + n \ell d \biggr) \cdot \bar{\kappa}_r(\mA) \log(1/\epsilon) \right) \quad \text{operations}
\end{equation}
are required, where $r$ is the largest integer satisfying $d \geq r + r \log\left( 1/\eta_r \right)$ with $\eta_r$ defined as in~\eqref{eq:sc-rcd_conv_main_d_cond}. 
Moreover, $O(nd \log(1/\epsilon)) + O(n^2 \cdot \bar{\kappa}_r(\mA) \log(1/\epsilon))$ entry evaluations of $\mA$, and $O(n(d + \ell))$ storage are required.
Here, big $O$ notation is used to hide absolute constants.
\end{theorem}

In particular, if we choose $\ell = d$, then~\eqref{eq:thm:sc-rcd_flat_complexity} simplifies to
\begin{equation} \label{eq:thm:sc-rcd_flat_complexity_simp}
    O\bigl( (n^2 + n d^2) \cdot \bar{\kappa}_r(\mA) \log(1/\epsilon) \bigr)
    \quad \text{operations}.
\end{equation}
We see that the SC-RCD method takes advantage of the flat-tailed structure of the spectrum of $\mA$ (i.e., when $\bar{\kappa}_{r}(\mA)$ is well-bounded), which can appear in practice due to explicit regularization or the effects of noise in the data. 

\begin{remark} \label{rem:krr}
To illustrate the claim of Theorem~\ref{thm:sc-rcd_flat}, let us suppose that $\mA$ is not extremely ill-conditioned. Specifically, suppose that there exists an absolute constant $\tau > 0$ such that
\begin{equation} \label{eq:thm:sc-rcd_flat_conds}
    \frac{\sum_{i=1}^n \lambda_i(\mA)}{\lambda^+_{\mathrm{min}}(\mA)} \leq O(n^{\tau}).
\end{equation}
Then, $r$ can be taken as large as $r = O(d / \log n)$ in the complexity bounds in Theorem~\ref{thm:sc-rcd_flat}.
An important setting in which~\eqref{eq:thm:sc-rcd_flat_conds} is often satisfied is kernel ridge regression, where $\mA = \mK + \lambda \mI$, the kernel matrix $\mK$ has unit diagonals ($\mK_{ii} = 1$ for all $i$), and the regularization parameter $\lambda$ is not too small (i.e., $\lambda \geq Cn^{-\tau}$).
In addition, if $\mK$ also exhibits rapid spectral decay, then $\mA$ would be expected to have a well-bounded normalized tail condition number (governed by $\lambda$).
In particular, if we assume that $\bar{\kappa}_r(\mA) = O(1)$ for some $r = O(\sqrt{n} / \log n)$, then, Theorem~\ref{thm:sc-rcd_flat} implies that SC-RCD with $d = O(\sqrt{n})$ and $\ell = O(\sqrt{n})$ can solve the KRR problem to $\epsilon$-relative error using $O(n^2 \log(1/\epsilon))$ arithmetic operations. Note that this is optimal in terms of $n$ for solving a dense $n \times n$ linear system.
\end{remark}

\paragraph{\textit{II. A subspace-constrained sketch-and-project framework}.}
To analyze the SC-RCD method, we consider the algorithm as an instance of a more general \emph{subspace-constrained sketch-and-project} framework, which we develop in Section~\ref{sec:sc-sap} and may be of independent interest.
The \emph{sketch-and-project method}~\cite{GowerRichtarik2015} encompasses a class of iterative algorithms for solving linear systems $\mA \vx = \vb$ with $\mA \in \reals^{m \times n}$ (not necessarily psd), including the randomized Kaczmarz~\cite{StrohmerVershynin2009}, randomized coordinate descent~\cite{LeventhalLewis2010}, and randomized Newton~\cite{GKLR2019} methods.

In each iteration of the standard sketch-and-project algorithm, a sketching matrix $\mS \in \reals^{\ell \times m}$ is drawn independently from an input distribution $\mathcal{D}$, and the current iterate $\vx^k \in \reals^n$ is projected onto the sketched linear system $\mS \mA \vx = \mS \vb$ with respect to the norm $\norm{\vx}_{\mB} = \sqrt{\vx^{\tran} \mB \vx}$ induced by a positive definite matrix parameter $\mB \in \reals^{n \times n}$:
\begin{equation} \label{eq:sketch_and_project_intro}
    \vx^{k+1} = \argmin_{\vx \in \reals^n} \norm{\vx - \vx^k}_{\mB} \quad \text{such that} \quad \mS \mA \vx = \mS \vb.
\end{equation}

We propose to constrain the dynamics of the iterates from~\eqref{eq:sketch_and_project_intro} within a particular (affine) subspace parameterized by a matrix parameter $\mQ \in \reals^{d \times m}$:
\[
    \vx^k \in \{ \vx \in \reals^n: \mQ \mA \vx = \mQ \vb \} \quad \text{for all } k \geq 0.
\]
In Section~\ref{sec:sc-sap}, we show that the theory for the subspace-constrained sketch-and-project method parallels the standard sketch-and-project method. The main difference is the emergence of a projector $\mP_{\mB}$ onto the subspace $\nullspace(\mQ \mA)$ that acts to constrain the iterates. We prove that the convergence rate depends on a projected version of the original matrix, $\mA \mP_{\mB}$, instead of $\mA$, which shows that the subspace constraint effectively acts as a preconditioner and can speed up the convergence in various general cases (see Theorem~\ref{thm:convergence_rate} and Remark~\ref{rmk:update_B}).

If the matrix $\mA \in \reals^{n \times n}$ is positive definite, then the SC-RCD method can be derived as a special case of this framework by choosing $\mB = \mA$; extremely sparse sketching matrices of the form $\mS = \ve_{\mathcal{J}}^{\tran}$, defined as in~\eqref{eq:block_rcd_update}; and a subspace constraint defined by the matrix $\mQ = \ve_{\mathcal{S}}^{\tran}$, which can be identified with the set of pivots $\mathcal{S} \subseteq [n]$ corresponding to a Nystr\"{o}m approximation $\mA \langle \mathcal{S} \rangle$ of $\mA$.

\paragraph{\textit{III. Randomized block Kaczmarz convergence rate}.}
As a part of our analysis of the sketch-and-project framework---which is not specific to the subspace-constrained version---we show that the iteration complexity improves at least linearly in the block size $\ell$ when the sketching matrices $\mS \in \reals^{\ell \times m}$ consist of $\ell$ independent and identically distributed rows (in Proposition~\ref{prop:convergence_rate_block}).

Previously, bounds for block sketch-and-project methods have only been obtained for more special sketching matrices that are dense~\cite{DeLiLiMa2020, derezinski2024sharp} or based on more computationally expensive DPP sampling schemes~\cite{mutny2020block, DerezinskiEtAl2024fine}. With more randomness, the rates obtained are generally sharper. However, Proposition~\ref{prop:convergence_rate_block} applies more generically, such as when standard coordinate basis vectors are sampled using a simple scheme.

Concretely, this result applies to the randomized block Kaczmarz method~\cite{Elfving1980, NeedellTropp2014} when the blocks consist of i.i.d.\ rows of $\mA$, sampled with probability proportional to the squared row norms, which is a direct block generalization of the classical randomized Kaczmarz method~\cite{StrohmerVershynin2009}.

\begin{proposition} \label{prop:block_rk_rate}
Consider solving the consistent linear system $\mA \vx = \vb$ with $\mA \in \reals^{m \times n}$ and solution $\vx^*$. Let $\{ \vx^k \}_{k \geq 0}$ be the iterates of the randomized block Kaczmarz method, defined by
\[
    \vx^{k+1} = \vx^k - (\mA_{\mathcal{J},:})^{\dagger} (\mA_{\mathcal{J},:} \vx^k - \vb_{\mathcal{J}}),
\]
where the block $\mathcal{J} = \{ j_1, \ldots, j_\ell \}$ in each iteration consists of $\ell$ rows of $\mA$, independently sampled from the distribution $\{ \norm{\mA_{j,:}}_2^2 / \norm{\mA}_F^2 \}_{j=1}^m$. If $\sigma^+_{\mathrm{min}}(\mA)$ denotes the smallest non-zero singular value of $\mA$, then
\[
    \E \norm{\vx^k - \vx^*}_2^2 \leq \left( 1 - \frac{\sigma_{\mathrm{min}}^+(\mA)^2}{\norm{\mA}_F^2} \right)^{\ell k} \cdot \norm{\vx^0 - \vx^*}_2^2.
\]
\end{proposition}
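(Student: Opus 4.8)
The plan is to recognize the block randomized Kaczmarz method as a special instance of the (unconstrained) sketch-and-project framework and then to invoke the block convergence result of Proposition~\ref{prop:convergence_rate_block}. Concretely, I would take $\mB = \mI$, take the trivial subspace constraint (so the projector $\mP = \mI$ and the projected matrix $\mA\mP$ of Theorem~\ref{thm:convergence_rate} reduces to $\mA$), and use the extremely sparse, single-row sketching matrices $\mS = \ve_j^{\tran}$ that select one row of $\mA$ at a time. For such a sketch, the projection step~\eqref{eq:sketch_and_project_intro} reduces to the orthogonal projection of $\vx^k$ onto the hyperplane $\{ \vx : \mA_{j,:} \vx = \vb_j \}$, which is exactly the classical randomized Kaczmarz update; taking $\ell$ i.i.d.\ rows and combining them through the pseudoinverse $(\mA_{\mathcal{J},:})^{\dagger}$ recovers the block update in the statement. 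Thus block RK is precisely the $\ell$-row i.i.d.\ instance of sketch-and-project to which Proposition~\ref{prop:convergence_rate_block} applies.

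The key computation is the single-row expected projection operator. For the sketch $\mS = \ve_j^{\tran}$ with $\mB = \mI$, the relevant operator appearing in Theorem~\ref{thm:convergence_rate} is the rank-one orthogonal projector $\mZ_j = \mA^{\tran} \ve_j (\ve_j^{\tran} \mA \mA^{\tran} \ve_j)^{\dagger} \ve_j^{\tran} \mA = \mA_{j,:}^{\tran} \mA_{j,:} / \norm{\mA_{j,:}}_2^2$. Sampling row $j$ with probability $p_j = \norm{\mA_{j,:}}_2^2 / \norm{\mA}_F^2$, the probability weights cancel the row-norm denominators, yielding
\[
    \E[\mZ_j] = \sum_{j=1}^m \frac{\norm{\mA_{j,:}}_2^2}{\norm{\mA}_F^2} \cdot \frac{\mA_{j,:}^{\tran} \mA_{j,:}}{\norm{\mA_{j,:}}_2^2} = \frac{\mA^{\tran} \mA}{\norm{\mA}_F^2}.
\]
The smallest nonzero eigenvalue of this matrix is precisely $\lambda_{\mathrm{min}}^+(\E[\mZ_j]) = \sigma_{\mathrm{min}}^+(\mA)^2 / \norm{\mA}_F^2$, which identifies the single-step contraction factor $1 - \sigma_{\mathrm{min}}^+(\mA)^2 / \norm{\mA}_F^2$. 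Proposition~\ref{prop:convergence_rate_block} then guarantees that with $\ell$ i.i.d.\ rows the per-iteration factor is at least as good as the $\ell$-th power of this single-row factor, producing the exponent $\ell k$ after $k$ iterations; since $\mB = \mI$, the $\mB$-norm in the general theorem is the Euclidean norm $\norm{\cdot}_2$, giving exactly the claimed bound.

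The main obstacle is not the spectral computation, which is routine, but carefully verifying the two identifications needed to reduce to the general framework. First, one must check that the pseudoinverse-based block update $(\mA_{\mathcal{J},:})^{\dagger}(\mA_{\mathcal{J},:} \vx^k - \vb_{\mathcal{J}})$ coincides with the $\mB = \mI$ sketch-and-project projection onto $\{ \vx : \mA_{\mathcal{J},:} \vx = \mA_{\mathcal{J},:} \vx^* \}$, including the degenerate cases where sampled rows repeat or $\mA_{\mathcal{J},:}$ is rank-deficient, so that the pseudoinverse correctly realizes the orthogonal projection and $\vx^*$ is a fixed point. Second, one must confirm that the i.i.d.\ squared-row-norm sampling distribution matches the hypothesis of Proposition~\ref{prop:convergence_rate_block} and that its per-iteration rate is instantiated with $\lambda_{\mathrm{min}}^+(\E[\mZ_j])$ computed above. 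Once these correspondences are in place, the result is an immediate corollary of the general block convergence rate.
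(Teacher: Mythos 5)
Your proposal is correct and follows essentially the same route as the paper: the paper also deduces Proposition~\ref{prop:block_rk_rate} from Proposition~\ref{prop:convergence_rate_block} (via Corollary~\ref{cor:sc_block_rk_rate} specialized to $\mP = \mI$), using the identical computation $\E[\mZ_1] = \mA^{\tran}\mA/\norm{\mA}_F^2$ and the trivially satisfied exactness condition. Your explicit attention to the identity $(\mA_{\mathcal{J},:})^{\dagger} = (\mA_{\mathcal{J},:})^{\tran}(\mA_{\mathcal{J},:}(\mA_{\mathcal{J},:})^{\tran})^{\dagger}$, which handles repeated or rank-deficient sampled rows, is a detail the paper leaves implicit but is exactly the right thing to check.
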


Proposition~\ref{prop:block_rk_rate} is a special case of Corollary~\ref{cor:sc_block_rk_rate} for the subspace-constrained randomized block Kaczmarz method, which appears later in Section~\ref{sec:sc-sap}.
To the best of our knowledge, this bound, which describes the explicit dependence of the rate on the block size, is new even for the standard randomized block Kaczmarz method.

In prior work~\cite{NeedellTropp2014}, a randomized block Kaczmarz method, which additionally requires $\mA$ to be row-normalized, was analyzed based on sampling non-intersecting blocks that partition the rows of the matrix.
The sampling strategy in Proposition~\ref{prop:block_rk_rate} is more general and slightly differs: in particular, it can result in repeated rows in the blocks. However, the convergence rate can only improve if the blocks are sampled without replacement instead, and we refer to Remark~\ref{rmk:sampling_without_replacement}, which appears later in Section~\ref{sec:sc-rcd_implementation}, for more details.

\subsection{Related works}

\paragraph{Coordinate descent and sketch-and-project.}
Important instances of the sketch-and-project framework~\cite{GowerRichtarik2015, RichtarikTakac2020, GowerHaRiSt2018, GowerMoMoNe2021} for solving linear systems include the randomized coordinate descent (RCD)~\cite{LeventhalLewis2010, Nesterov2012}---also known as randomized Gauss--Seidel---and the closely-related randomized Kaczmarz (RK)~\cite{StrohmerVershynin2009} methods.
The RK and RCD methods and their variants have been extensively studied, including~\cite{MaNeedellRamdas2015, HefnyNeRa2017, NeSrWa2016, NeedellTropp2014, SchopferLorenz2019, HaNeReSw2022, EpperlyGoldshlagerWebber2024, rathore2025turbo}.
Notably, the subspace-constrained sketch-and-project framework developed in this paper generalizes a subspace-constrained RK method analyzed in~\cite{LokRebrova2024}. 
Note that coordinate descent algorithms for more general objective functions have also been extensively studied in the optimization literature: e.g., see~\cite{RichtarikTakac2014, RichtarikTakac2016, QuRichtarik2016, TsengYun2010}.

An accelerated sketch-and-project method with Nesterov momentum was analyzed in~\cite{GowerHaRiSt2018}, building on earlier works on accelerated RK~\cite{LiuWright2016acc} and RCD~\cite{Nesterov2012, LeeSidford2013, NesterovStich2017, TuEtAl2017locality}. The accelerated method requires additional tuning parameters: theoretically, with a specific choice of values (which are not easily computable), the accelerated method leads to an improved convergence rate bound. In particular, \cite{TuEtAl2017locality} presents experiments showing that accelerated RCD can outperform RCD and the conjugate gradient method for large-scale KRR problems in machine learning.

A closely related line of works~\cite{DerezinskiYang2024, DerezinskiEtAl2024fine, DerezinskiEtAl2025beyond} analyzes randomized solvers based on sketch-and-project that are also especially effective for solving approximately low-rank systems, building on the insight that sketch-and-project can exploit rapid spectral decay~\cite{derezinski2024sharp}.
Most recently, Derezi{\'n}ski et al.~\cite{DerezinskiEtAl2025beyond} showed that an algorithm based on accelerated RCD called CD++ can exploit large spectral outliers. Specifically, they prove that for any $\tilde{O}(1) \leq r \leq n$ (where $\tilde{O}$ hides polylog factors in $n$), a solution of the psd system $\mA \vx = \vb$ with $\epsilon$-relative error in $\norm{\cdot}_{\mA}$ can be computed using
\begin{equation} \label{eq:cd++_complexity}
    \tilde{O}(nr^2) + \tilde{O}(n^2 \sqrt{\bar{\kappa}_r(\mA)} \log(1/\epsilon))
    \quad \text{operations (\cite[\S 5]{DerezinskiEtAl2025beyond})}.
\end{equation}
The CD++ algorithm achieves~\eqref{eq:cd++_complexity} through the use and analysis of techniques such as adaptive acceleration, approximate regularized projections, randomized Hadamard preconditioning, and block memoization. One of the main takeaways is that the rate of CD++ cannot be outperformed by any solver based on matrix-vector products, such as Krylov subspace-based solvers (\cite[Theorem~3]{DerezinskiEtAl2024fine}). 

The SC-RCD method with $d = O(r \log n)$, $\ell = d$, and $1 \leq r \leq O(\sqrt{n} / \log n)$ has a similar complexity bound as~\eqref{eq:cd++_complexity} for flat-tailed systems with $\bar{\kappa}_r(\mA) = O(1)$ satisfying condition~\eqref{eq:thm:sc-rcd_flat_conds}. Otherwise, CD++ has a better dependence on the normalized tail condition number due to the incorporation of Nesterov's momentum in the algorithm, which could be integrated into SC-RCD as a part of future work.
At the same time, SC-RCD does not require storing the entire input matrix, while the CD++ guarantee~\eqref{eq:cd++_complexity} assumes that $O(n^2)$ memory is available for storing a preprocessed version of the matrix.
Algorithmically, the main difference is that CD++ \emph{implicitly} captures the leading part of the spectrum of $\mA$, whereas it is \emph{explicitly} learned in SC-RCD by constraining the dynamics of RCD.
The SC-RCD method is based on the combination of two fundamental ideas: low-rank matrix approximation and a simple iterative solver. We believe that this allows for flexibility: e.g., the aforementioned innovations in CD++ can be combined with the subspace constraint idea.

\paragraph{Conjugate gradient method.}
Another popular class of iterative solvers is based on Krylov subspaces~\cite{NakatsukasaTropp2024}, which includes the \emph{conjugate gradient method} (CG)~\cite{Saad2003} for solving psd systems. In practice, preconditioning is often crucial for these methods to be effective.
For kernel ridge regression (KRR) specifically, a \emph{preconditioned conjugate gradient} (PCG) method for solving $(\mK + \lambda \mI) \vx = \vy$ using a low-rank Nystr\"{o}m approximation $\widehat{\mK} = \mF \mF^{\tran}$ of $\mK$ computed with RPCholesky was analyzed by D{\'{i}}az et al.~\cite{DiazEtAl2023robust}.
We note that there many other methods, based on dimension reduction, have been proposed to solve large-scale KRR problems: e.g., we refer to~\cite{rathore2025askotch, DiazEtAl2023robust, avron2017kernel, meanti2020kernel, rudi2017falkon, RahimiRecht2007, SmolaBartlett2000, WilliamsSeeger2000} and the references therein.

To give a brief overview of the PCG method of~\cite{DiazEtAl2023robust}, a preconditioner $\mM = \widehat{\mK} + \lambda \mI$ is constructed from an SVD of $\mF$, and the spectrum of $\mM^{-1/2} (\mK + \lambda \mI) \mM^{-1/2}$ determines the resulting rate of convergence.
It is shown that if the approximation rank $d$ is larger than the $\lambda$-tail rank of $\mK$, which is defined as the smallest integer $r$ such that $\sum_{i > r} \lambda_i(\mK) \leq \lambda$, then the preconditioned matrix has constant condition number, meaning that $O(n^2 \log(1/\epsilon))$ operations suffices to compute a solution with $\epsilon$-relative error in $\norm{\cdot}_{\mA}$ (\cite[Theorem~2.2]{DiazEtAl2023robust}).
This is a similar result as Theorem~\ref{thm:sc-rcd_flat} for SC-RCD: however, we note that SC-RCD can be applied to general psd systems.

\paragraph{Low-rank approximation.}
Low-rank matrix approximation is a fundamental idea in numerical linear algebra~\cite{MartinssonTropp2020}. The most relevant forms for SC-RCD are based on (column) Nystr\"{o}m approximation or interpolative decomposition~\cite{ChenEtAl2024, DongEtAl2024, EpperlyEtAl2024rejection}. These are aligned with the standard coordinate basis, which means that the corresponding subspace constraint can be enforced by updating a fixed subset of coordinates.
These algorithms use random, adaptive pivoting, and are accompanied by provable guarantees of quality that are comparable to the best low-rank approximation.
Other simple strategies such as uniform sampling and greedy pivoting may work well in practice, but do not have good error bounds in general.
DPP sampling~\cite{DerezinskiMahoney2021}, which is based on sampling blocks weighted by their squared volumes, offers the best known near-optimal bounds, but are not easily computable.
Recently, a greedy deterministic method based on maximization of a trace-norm is also analyzed in~\cite{FornaceLindsey2024}.

More generally, the subspace constraint for sketch-and-project can be defined using low-rank approximations from \emph{random embeddings} of $\mA$, which mix across all the coordinates to perform dimension reduction (e.g., multiplying by a Gaussian matrix).
These methods are often more robust and lead to a better approximation than those based on sampling. We refer to the survey~\cite[\S 5.4]{TroppWebber2023lowrank} for a comprehensive discussion of Nystr\"{o}m-based adaptations of matrix approximation algorithms such as randomized SVD~\cite{HaMaTr2011}, as well as extensions based on block Krylov iteration, which are more accurate for matrices with slow spectral decay.

\subsection{Organization}

The rest of the paper is structured as follows. Section~\ref{sec:sc-sap} describes a more abstract subspace-constrained sketch-and-project framework for solving general linear systems.
In Section~\ref{sec:sc-rcd}, the analysis is specialized to the SC-RCD method for solving psd linear systems.
Section~\ref{sec:numerical_results} presents numerical experiments demonstrating the performance of SC-RCD for solving psd systems, generated synthetically and coming from kernel ridge regression problems using real-life datasets.
Appendix~\ref{sec:sc-sap_proofs} contains some omitted technical proofs, Appendix~\ref{sec:least_squares} describes an extension of the SC-RCD method for solving least squares problems, and Appendix~\ref{sec:additional_numexp} presents additional numerical experiments.

\section{A general framework for subspace-constrained sketch-and-project} \label{sec:sc-sap}

Consider the goal of solving a consistent system of linear equations $\mA \vx = \vb$, where $\mA \in \reals^{m \times n}$. Since the matrix $\mA$ may not necessarily be square or have full rank in this generality, we aim to find the min-norm solution
\begin{equation} \label{eq:defn_xstar}
    \vx^* := \argmin_{\vx \in \reals^n} \norm{\vx - \vx^0}_{\mB} \quad \text{such that} \quad \mA \vx = \vb,
\end{equation}
where $\vx^0 \in \reals^n$ represents any initial approximate solution and the norm is induced by a psd matrix parameter $\mB \in \reals^{n \times n}$. Observe that $\vx^*$ is the projection of $\vx^0$ onto $\mA \vx = \vb$ with respect to the $\mB$-norm, so equivalently,
\begin{equation} \label{eq:xstar_formula}
    \vx^* = \vx^0 - \mB^{-1} \mA^{\tran} (\mA \mB^{-1} \mA^{\tran})^{\dagger} (\mA \vx^0 - \vb).
\end{equation}

\paragraph{Overview of the standard sketch-and-project method.}
In each iteration of the vanilla sketch-and-project framework~\cite{GowerRichtarik2015}, a sketching matrix $\mS \equiv \mS^k$ is drawn independently from an input distribution $\mathcal{D}$, and the current iterate $\vx^k \in \reals^n$ is projected onto the sketched linear system $\mS \mA \vx = \mS \vb$ with respect to the $\mB$-norm as in~\eqref{eq:sketch_and_project_intro}.
As shown in~\cite{GowerRichtarik2015}, this update can be written in closed form as
\begin{equation} \label{eq:sketch_and_project_update}
    \vx^{k+1} = \vx^k - \mB^{-1} \mA^{\tran} \mS^{\tran} (\mS \mA \mB^{-1} \mA^{\tran} \mS^{\tran})^{\dagger} \mS(\mA \vx^k - \vb),
\end{equation}
or expressed as a fixed point iteration in terms of the solution $\vx^*$ from~\eqref{eq:defn_xstar} as
\begin{equation} \label{eq:sketch_and_project_fixedpoint}
    \vx^{k+1} - \vx^* = (\mI - \widetilde{\mZ}) (\vx^k - \vx^*),
\end{equation}
where $\widetilde{\mZ}$ is the orthogonal projector onto $\range(\mB^{-1/2} \mA^{\tran} \mS^{\tran})$: that is, 
\begin{equation} \label{eq:sketch_and_project_projector}
    \widetilde{\mZ} := \mB^{-1/2} \mA^{\tran} \mS^{\tran} (\mS \mA \mB^{-1} \mA^{\tran} \mS^{\tran})^{\dagger} \mS \mA \mB^{-1/2}.
\end{equation}
The convergence rate of the sketch-and-project algorithm depends on the eigenvalue spectrum of the expected projection matrix $\E[\widetilde{\mZ}]$, where the expectation is taken over the random sketching matrix $\mS \sim \mathcal{D}$ (\cite[Theorem~4.6]{GowerRichtarik2015}).
For a wide class of sketching matrices (including $\mS$ with independent subgaussian entries as well as sparse counterparts), it has been shown that the convergence rate of sketch-and-project precisely depends on the entire singular value spectrum of $\mA$~\cite{DeLiLiMa2020, derezinski2024sharp}.

\subsection{The subspace-constrained sketch-and-project framework}

Let $\mQ \in \reals^{d \times m}$ with $d < m$ be an arbitrary matrix parameter defining the (affine) subspace $\{ \vx \in \reals^n: \mQ \mA \vx = \mQ \vb \}$.
We formulate the \emph{subspace-constrained sketch-and-project method} as the process starting from an initial iterate $\vx^0 \in \reals^n$ solving $\mQ \mA \vx^0 = \mQ \vb$, and with subsequent iterates given by
\begin{equation} \label{eq:sc_sap}
    \vx^{k+1} = \argmin_{\vx \in \reals^n} \norm{\vx - \vx^k}_{\mB} \quad \text{such that} \quad \mS \mA \vx = \mS \vb,\; \mQ \mA \vx = \mQ \vb,
\end{equation}
where the sketching matrices $\mS \equiv \mS^k$ in each iteration are drawn independently from an input distribution $\mathcal{D}$.
This is equivalent to the sketch-and-project method with the iterates $\vx^k$ confined within the subspace $\{ \vx \in \reals^n: \mQ \mA \vx = \mQ \vb \}$.

The matrix $\mQ$ could be chosen deterministically and consist of a selection of $d$ rows of the system of linear equations. It could also be a random embedding of the linear system.
Informally, constraining the iterates to a subspace parametrized by $\mQ$ is computationally beneficial when
\begin{enumerate}[label=\arabic*., leftmargin=2em]
    \item we can efficiently solve the smaller system $\mQ \mA \vx = \mQ \vb$ to obtain an initial iterate $\vx^0$; and
    \item we can efficiently project onto the nullspace of $\mQ \mA \mB^{-1/2}$ in each iteration.
\end{enumerate}

In the rest of this section, our goal is to develop general theory for the subspace-constrained sketch-and-project method~\eqref{eq:sc_sap}. Specifically, we will derive closed-form expressions for the iterations and estimates for the convergence rate, and demonstrate the theoretical advantage of having a subspace constraint.

\subsection{Update rule and comparison with unconstrained sketch-and-project}

Our first goal is to show that the iterations of subspace-constrained sketch-and-project admit the following closed-form expressions, analogous to~\eqref{eq:sketch_and_project_update} and~\eqref{eq:sketch_and_project_fixedpoint} for the unconstrained method.

\begin{lemma}[Closed-form updates] \label{lem:update}
Let $\mP$ and $\mZ$ be the orthogonal projection matrices onto $\nullspace(\mQ \mA \mB^{-1/2})$ and $\range(\mP \mB^{-1/2} \mA^{\tran} \mS ^{\tran})$ respectively; that is,
\begin{align}\label{z-def}
    \mP &:= \mI - (\mQ \mA \mB^{-1/2})^{\dagger} \mQ \mA \mB^{-1/2}, \nonumber\\
    \mZ &:= \mP \mB^{-1/2} \mA^{\tran} \mS^{\tran} (\mS \mA \mB^{-1/2} \mP \mB^{-1/2} \mA^{\tran} \mS^{\tran})^{\dagger} \mS \mA \mB^{-1/2} \mP.
\end{align}
If $\vx^0$ is any vector satisfying $\mQ \mA \vx^0 = \mQ \vb$, then the subspace-constrained sketch-and-project iterates $\{ \vx^k \}_{k \geq 0}$ from~\eqref{eq:sc_sap} satisfy the following:
\begin{equation} \label{eq:lem_update_1}
    \vx^{k+1} = \vx^k - \mB^{-1/2} \mP \mB^{-1/2} \mA^{\tran} \mS^{\tran} (\mS \mA \mB^{-1/2} \mP \mB^{-1/2} \mA^{\tran} \mS^{\tran})^{\dagger} \mS(\mA \vx^k - \vb).
\end{equation}
Furthermore, if $\vx^*$ is the min-norm solution as in~\eqref{eq:defn_xstar}, then
\begin{equation} \label{eq:lem_update_2}
    \mB^{1/2} (\vx^{k+1} - \vx^*) = (\mI - \mZ) \mB^{1/2}(\vx^k - \vx^*).
\end{equation}
\end{lemma}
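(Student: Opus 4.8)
The plan is to reduce everything to orthogonal projections in the Euclidean geometry via the change of variables $\vy := \mB^{1/2}\vx$, so that the $\mB$-norm on $\vx$ becomes the standard $2$-norm on $\vy$. Writing $\widetilde{\mS} := \mS\mA\mB^{-1/2}$ and $\widetilde{\mQ} := \mQ\mA\mB^{-1/2}$, the constraints in~\eqref{eq:sc_sap} become $\widetilde{\mS}\vy = \mS\vb$ and $\widetilde{\mQ}\vy = \mQ\vb$, the projector $\mP = \mI - \widetilde{\mQ}^{\dagger}\widetilde{\mQ}$ is the orthogonal projector onto $\nullspace(\widetilde{\mQ})$, and~\eqref{eq:sc_sap} says that $\vy^{k+1}$ is the Euclidean projection of $\vy^k$ onto the intersection of these two affine constraints. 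All the matrices appearing in~\eqref{z-def} and~\eqref{eq:lem_update_1} then simplify: the inner matrix is $\widetilde{\mS}\mP\widetilde{\mS}^{\tran}$ and the outer factor is $\mP\widetilde{\mS}^{\tran}$, so that with $\mN := \mP\widetilde{\mS}^{\tran}$ one has $\mZ = \mN\mN^{\dagger}$, the orthogonal projector onto $\range(\mP\widetilde{\mS}^{\tran})$, consistent with the statement.

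First I would record the invariant that drives the whole argument: by induction, every iterate satisfies the subspace constraint $\mQ\mA\vx^k = \mQ\vb$, i.e.\ $\widetilde{\mQ}\vy^k = \mQ\vb$. The base case is the hypothesis on $\vx^0$, and the inductive step is immediate since $\vx^{k+1}$ is required to satisfy $\mQ\mA\vx^{k+1} = \mQ\vb$ by~\eqref{eq:sc_sap}. Because the exact solution $\vy^* = \mB^{1/2}\vx^*$ also satisfies both constraints, it follows that $\vy^k - \vy^*$ and the increment $\vy^{k+1} - \vy^k$ both lie in $\nullspace(\widetilde{\mQ}) = \range(\mP)$; equivalently $\mP\mB^{1/2}(\vx^k - \vx^*) = \mB^{1/2}(\vx^k - \vx^*)$. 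This is the single fact that makes the subspace-constrained update collapse onto the projector $\mP$.

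Next I would solve the projection~\eqref{eq:sc_sap} explicitly. Since $\vy^k$ already lies on the $\widetilde{\mQ}$-constraint, the minimizing increment $\vw := \vy^{k+1} - \vy^k$ is the shortest vector with $\vw \in \range(\mP)$ and $\widetilde{\mS}\vw = \mS\vb - \widetilde{\mS}\vy^k = -\mS(\mA\vx^k - \vb)$. I would show the solution is $\vw = \mP\widetilde{\mS}^{\tran}(\widetilde{\mS}\mP\widetilde{\mS}^{\tran})^{\dagger}(\mS\vb - \widetilde{\mS}\vy^k)$ by verifying three things: (i) $\vw \in \range(\mP)$, clear from the leading $\mP$; (ii) $\vw$ satisfies the constraint, which reduces to the range inclusion $\mS\vb - \widetilde{\mS}\vy^k \in \range(\widetilde{\mS}\mP\widetilde{\mS}^{\tran}) = \range(\widetilde{\mS}\mP)$, exactly the consistency supplied by $\vw^* := \vy^* - \vy^k \in \range(\mP)$ with $\widetilde{\mS}\vw^* = \mS\vb - \widetilde{\mS}\vy^k$; and (iii) minimality, via the orthogonality $\langle \vw, \vw' - \vw\rangle = 0$ for any competitor $\vw' \in \range(\mP)$ with $\widetilde{\mS}\vw' = \widetilde{\mS}\vw$, which holds since $\vw' - \vw \in \range(\mP) \cap \nullspace(\widetilde{\mS})$ while $\vw \in \range(\mP\widetilde{\mS}^{\tran})$. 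Undoing the substitution yields~\eqref{eq:lem_update_1}.

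Finally, to obtain the fixed-point form~\eqref{eq:lem_update_2} I would substitute $\mS\vb = \widetilde{\mS}\vy^*$ (valid since $\vy^*$ solves the sketched system) to rewrite the increment as $\vw = -\mP\widetilde{\mS}^{\tran}(\widetilde{\mS}\mP\widetilde{\mS}^{\tran})^{\dagger}\widetilde{\mS}(\vy^k - \vy^*)$, so that $\vy^{k+1} - \vy^* = (\vy^k - \vy^*) - \mP\widetilde{\mS}^{\tran}(\widetilde{\mS}\mP\widetilde{\mS}^{\tran})^{\dagger}\widetilde{\mS}(\vy^k - \vy^*)$. The one remaining manipulation is to insert an extra factor of $\mP$: the invariant $\vy^k - \vy^* \in \range(\mP)$ gives $\widetilde{\mS}(\vy^k - \vy^*) = \widetilde{\mS}\mP(\vy^k - \vy^*)$, which turns the subtracted term into $\mZ(\vy^k - \vy^*)$ and produces $\vy^{k+1} - \vy^* = (\mI - \mZ)(\vy^k - \vy^*)$, i.e.\ \eqref{eq:lem_update_2}. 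I expect the main obstacle to be the bookkeeping in the range-constrained min-norm step, in particular establishing the range inclusion that guarantees the constraint is met and the orthogonality that guarantees minimality, since these are precisely the places where consistency of the system and the structure of $\mP$ must be used with care rather than through routine pseudoinverse identities.
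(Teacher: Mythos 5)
Your proof is correct, and it shares the paper's overall scaffolding — the change of variables $\vy = \mB^{1/2}\vx$, the invariant $\mQ \mA \vx^k = \mQ \vb$ (so increments lie in $\range(\mP)$), and the final substitution $\mS \vb = \mS \mA \vx^*$ plus insertion of $\mP$ to obtain the fixed-point form — but the key step is handled by a genuinely different mechanism. The paper derives the minimizer from first-order conditions: it introduces Lagrange multipliers $\vlambda, \vtau$ for the two constraints, multiplies the stationarity equation by $\mP$ so that $\mP \mB^{-1/2} \mA^{\tran} \mQ^{\tran} = \vzero$ annihilates the $\mQ$-multiplier, concludes that the optimal increment lies in $\range(\mP \mB^{-1/2} \mA^{\tran} \mS^{\tran})$ and is therefore fixed by $\mZ$, and unwinds to the closed form. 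You instead posit the candidate $\vw = \mP \widetilde{\mS}^{\tran} (\widetilde{\mS} \mP \widetilde{\mS}^{\tran})^{\dagger}(\mS \vb - \widetilde{\mS} \vy^k)$ and verify it variationally: feasibility via the range inclusion $\mS \vb - \widetilde{\mS}\vy^k \in \range(\widetilde{\mS}\mP)$, witnessed by $\vy^* - \vy^k \in \range(\mP)$, and optimality via the orthogonality $\range(\mP \widetilde{\mS}^{\tran}) \perp \bigl( \range(\mP) \cap \nullspace(\widetilde{\mS}) \bigr)$ and Pythagoras. Your route is self-contained (no appeal to KKT necessity) and has the merit of making explicit the consistency fact that the pseudoinverse term really does satisfy the sketched constraint — a point the Lagrangian derivation leaves implicit, and the same range inclusion that the paper later invokes in a footnote to justify solvability of the inner system in Algorithm~\ref{alg:sc-rcd}. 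The paper's route, in exchange, derives the formula rather than guessing it, and the trick of projecting the stationarity condition by $\mP$ is slightly shorter. Both arguments are complete and yield~\eqref{eq:lem_update_1} and~\eqref{eq:lem_update_2} correctly.
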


The proof of Lemma~\ref{lem:update}, which is technical, can be found in Appendix~\ref{sec:sc-sap_proofs}.
As a quick sanity check, observe that with $\mQ = \mzero$ and $\mP = \mI$ in Lemma~\ref{lem:update}, which corresponds to having no subspace constraint, we recover the sketch-and-project updates~\eqref{eq:sketch_and_project_update} and~\eqref{eq:sketch_and_project_fixedpoint}.

A consequence of the closed form update formulas is the following auxiliary lemma, whose proof can also be found in Appendix~\ref{sec:sc-sap_proofs}.

\begin{lemma}[Invariant subspace property] \label{lem:error_invariance}
Let $\vx^*$ be the min-norm solution to $\mA \vx = \vb$ defined in~\eqref{eq:defn_xstar} and $\vx^k$ be the $k$th iterate from the subspace-constrained sketch-and-project method \eqref{eq:sc_sap}. Then for all $k \geq 0$, $\mB^{1/2}(\vx^k - \vx^*) \in \range(\mP \mB^{-1/2} \mA^{\tran})$.
\end{lemma}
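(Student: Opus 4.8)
The plan is to argue by induction on $k$, propagating membership in the target subspace $V := \range(\mP \mB^{-1/2} \mA^{\tran})$ using the fixed-point form~\eqref{eq:lem_update_2} of the iteration supplied by Lemma~\ref{lem:update}. The two ingredients are a base case that exploits the explicit formula for $\vx^*$ together with the initialization constraint, and an inductive step that uses a range-containment property of the projector $\mZ$.

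For the base case $k = 0$, I would start from the closed-form expression~\eqref{eq:xstar_formula} for $\vx^*$, which gives
\[
    \mB^{1/2}(\vx^0 - \vx^*) = \mB^{-1/2} \mA^{\tran} (\mA \mB^{-1} \mA^{\tran})^{\dagger} (\mA \vx^0 - \vb),
\]
so that $\mB^{1/2}(\vx^0 - \vx^*) \in \range(\mB^{-1/2} \mA^{\tran})$. The key step is to upgrade this to membership in $V$ by showing the vector is fixed by $\mP$. Applying $\mQ \mA \mB^{-1/2}$ and simplifying gives $\mQ \mA \mB^{-1/2} \cdot \mB^{1/2}(\vx^0 - \vx^*) = \mQ \mA (\vx^0 - \vx^*)$, and this vanishes because the initialization satisfies $\mQ \mA \vx^0 = \mQ \vb$ while feasibility gives $\mA \vx^* = \vb$, hence $\mQ \mA \vx^* = \mQ \vb$. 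Thus $\mB^{1/2}(\vx^0 - \vx^*) \in \nullspace(\mQ \mA \mB^{-1/2})$, so $\mP$ acts as the identity on it, and combining with the displayed formula places it in $V = \range(\mP \mB^{-1/2} \mA^{\tran})$.

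For the inductive step, assume $\mB^{1/2}(\vx^k - \vx^*) \in V$. By~\eqref{eq:lem_update_2}, $\mB^{1/2}(\vx^{k+1} - \vx^*) = (\mI - \mZ) \mB^{1/2}(\vx^k - \vx^*)$. The crucial observation is that $\range(\mZ) = \range(\mP \mB^{-1/2} \mA^{\tran} \mS^{\tran}) \subseteq V$, since right-multiplication by $\mS^{\tran}$ only produces linear combinations of the columns of $\mP \mB^{-1/2} \mA^{\tran}$. Consequently both $\mB^{1/2}(\vx^k - \vx^*)$ (by the inductive hypothesis) and $\mZ \mB^{1/2}(\vx^k - \vx^*)$ lie in the subspace $V$, so their difference $(\mI - \mZ) \mB^{1/2}(\vx^k - \vx^*)$ does as well, completing the induction.

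I do not expect a serious obstacle once Lemma~\ref{lem:update} is available; the argument is short. The only point requiring genuine care is the base case, where one must correctly combine the two affine constraints to verify $\mQ \mA (\vx^0 - \vx^*) = \vzero$ and thereby identify $\mB^{1/2}(\vx^0 - \vx^*)$ as a \emph{fixed point} of $\mP$, rather than merely an element of $\range(\mB^{-1/2} \mA^{\tran})$; without this refinement the conclusion would be weaker than the claimed membership in $V$.
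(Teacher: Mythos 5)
Your proof is correct and follows essentially the same route as the paper's: the base case combines the explicit formula~\eqref{eq:xstar_formula} with the constraint $\mQ \mA (\vx^0 - \vx^*) = \vzero$ to show $\mP$ fixes $\mB^{1/2}(\vx^0 - \vx^*)$, and the inductive step uses the fixed-point form~\eqref{eq:lem_update_2} together with $\range(\mZ) = \range(\mP \mB^{-1/2} \mA^{\tran} \mS^{\tran}) \subseteq \range(\mP \mB^{-1/2} \mA^{\tran})$. Your observation that the base case genuinely requires upgrading from $\range(\mB^{-1/2} \mA^{\tran})$ to the smaller subspace via the affine constraint is precisely the point the paper's argument also turns on.
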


The next result gives a formula for the decrease in error for each iteration of subspace-constrained sketch-and-project, and justifies that it is always at least as large as the error decrease from the corresponding unconstrained update.

\begin{lemma}[Error decrease] \label{lem:error_decrease}
With the same notation as in Lemma~\ref{lem:update}, the decrease in error in each iteration of subspace-constrained sketch-and-project is given by
\begin{equation} \label{eq:error_decrease_1}
    \norm{\mB^{1/2}(\vx^{k+1} - \vx^*)}_2^2 = \norm{\mB^{1/2}(\vx^k - \vx^*)}_2^2 - \norm{\mZ \mB^{1/2} (\vx^k - \vx^*)}_2^2.
\end{equation}
Furthermore, if $\widetilde{\mZ}$ is the corresponding projection matrix for the unconstrained sketch-and-project method from~\eqref{eq:sketch_and_project_projector} with the same sketching matrix $\mS$, then the error decrease with the subspace constraint is not smaller than the error decrease in the unconstrained case:
\begin{equation} \label{eq:comparison_vanilla_1}
    \norm{\mZ \mB^{1/2} (\vx^k - \vx^*)}_2 \geq \norm{\widetilde{\mZ} \mB^{1/2} (\vx^k - \vx^*)}_2.
\end{equation}
\end{lemma}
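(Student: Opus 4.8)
For the identity~\eqref{eq:error_decrease_1} I would start directly from the fixed-point form~\eqref{eq:lem_update_2} in Lemma~\ref{lem:update}. Writing $\vw := \mB^{1/2}(\vx^k - \vx^*)$, that relation reads $\mB^{1/2}(\vx^{k+1} - \vx^*) = (\mI - \mZ)\vw$. Since $\mZ$ is an orthogonal projection, so is $\mI - \mZ$, and the two project onto mutually orthogonal subspaces. The plan is then to invoke the Pythagorean identity $\norm{\vw}_2^2 = \norm{\mZ \vw}_2^2 + \norm{(\mI - \mZ)\vw}_2^2$, which rearranges to $\norm{(\mI-\mZ)\vw}_2^2 = \norm{\vw}_2^2 - \norm{\mZ\vw}_2^2$; this is exactly~\eqref{eq:error_decrease_1}.

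\textbf{Comparison with the unconstrained update.} For~\eqref{eq:comparison_vanilla_1} I would set $\mM := \mB^{-1/2} \mA^{\tran} \mS^{\tran}$, so that $\widetilde{\mZ}$ projects orthogonally onto $\range(\mM)$ while $\mZ$ projects onto $\range(\mP \mM)$. The key structural input is Lemma~\ref{lem:error_invariance}, which guarantees $\vw \in \range(\mP \mB^{-1/2} \mA^{\tran}) \subseteq \range(\mP)$; since $\mP$ is an orthogonal projection, this forces $\mP \vw = \vw$. Because an orthogonal projection onto a subspace realizes the minimal distance to that subspace, I would use the variational characterizations $\norm{\vw - \mZ \vw}_2 = \min_{\vd} \norm{\vw - \mP \mM \vd}_2$ and $\norm{\vw - \widetilde{\mZ}\vw}_2 = \min_{\vc}\norm{\vw - \mM\vc}_2$. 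Choosing $\vc$ to be the minimizer of the latter (so that $\mM\vc = \widetilde{\mZ}\vw$) and inserting $\vw = \mP\vw$ to exploit the contraction property of $\mP$,
\[
\norm{\vw - \mZ\vw}_2 \leq \norm{\vw - \mP\mM\vc}_2 = \norm{\mP(\vw - \mM\vc)}_2 \leq \norm{\vw - \mM\vc}_2 = \norm{\vw - \widetilde{\mZ}\vw}_2.
\]

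\textbf{Conclusion and main obstacle.} To finish, I would apply the Pythagorean identity to each of the two orthogonal projections, giving $\norm{\mZ\vw}_2^2 = \norm{\vw}_2^2 - \norm{\vw - \mZ\vw}_2^2$ and the analogous statement for $\widetilde{\mZ}$; combined with the residual inequality just established, this yields $\norm{\mZ\vw}_2 \geq \norm{\widetilde{\mZ}\vw}_2$, which is~\eqref{eq:comparison_vanilla_1}. The first identity is essentially bookkeeping; the genuinely delicate point is the second part. Its crux is that the comparison is false for a generic vector and holds precisely because $\vw$ lies in $\range(\mP)$ (via Lemma~\ref{lem:error_invariance}), which is what licenses the substitution $\vw = \mP\vw$ and brings the $1$-Lipschitz property of $\mP$ into play. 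I would therefore take care to state the distance-minimizing (variational) characterization of both projections cleanly, since the entire argument rests on comparing residuals over the nested feasible sets $\range(\mP\mM) \subseteq \range(\mM)$ restricted to the invariant subspace $\range(\mP)$.
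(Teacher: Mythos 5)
Your proof is correct, and while the first identity is handled exactly as in the paper (the fixed-point equation~\eqref{eq:lem_update_2} plus Pythagoras for the orthogonal projector $\mZ$), your argument for~\eqref{eq:comparison_vanilla_1} takes a genuinely different route. The paper, also starting from Lemma~\ref{lem:error_invariance}, rewrites both squared norms as quadratic forms in the sketched residual $\vxi = \mS\mA(\vx^k - \vx^*)$ and reduces the claim to $\vxi^{\tran}(\mS \mA \mB^{-1/2} \mP \mB^{-1/2} \mA^{\tran} \mS^{\tran})^{\dagger}\vxi \geq \vxi^{\tran}(\mS \mA \mB^{-1} \mA^{\tran} \mS^{\tran})^{\dagger}\vxi$; since the Loewner order $\mP \preceq \mI$ does not by itself reverse under pseudoinversion, the paper must additionally verify the nullspace inclusion $\nullspace(\mS\mA\mB^{-1}\mA^{\tran}\mS^{\tran}) \subseteq \mathcal{L} := \nullspace(\mP\mB^{-1/2}\mA^{\tran}\mS^{\tran})$ and that $\vxi \in \mathcal{L}^{\perp}$, so the inversion is legitimate on a common subspace. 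Your variational argument --- inserting the optimizer $\mM\vc = \widetilde{\mZ}\vw$ as the feasible candidate $\mP\mM\vc \in \range(\mP\mM)$ and exploiting $\mP\vw = \vw$ together with the contraction $\norm{\mP\vz}_2 \leq \norm{\vz}_2$ --- sidesteps pseudoinverse monotonicity entirely and is more elementary; what the paper's computation buys is the explicit quadratic-form expression for the per-iteration progress in terms of the sketched residual. One correction to your closing commentary: the feasible sets are \emph{not} nested, as $\range(\mP\mM) \subseteq \range(\mM)$ is false in general (applying $\mP$ to a vector of $\range(\mM)$ typically leaves $\range(\mM)$). Fortunately your proof never uses nestedness --- the chain $\norm{\vw - \mZ\vw}_2 \leq \norm{\mP(\vw - \mM\vc)}_2 \leq \norm{\vw - \mM\vc}_2$ is what carries the argument --- so only that remark should be rephrased, e.g., as comparing best-approximation errors from $\range(\mP\mM)$ and $\range(\mM)$ for vectors fixed by $\mP$.
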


\begin{proof}
Since $\mZ$ is an orthogonal projector, the per-iteration error decrease~\eqref{eq:error_decrease_1} in terms of the Euclidean norm follows from the fixed point equation~\eqref{eq:lem_update_2} in Lemma~\ref{lem:update} and Pythagoras' theorem. 

Next, observe that showing~\eqref{eq:comparison_vanilla_1} is equivalent to showing
\begin{equation} \label{eq:comparison_vanilla_2}
    \vxi^{\tran} (\mS \mA \mB^{-1/2} \mP \mB^{-1/2} \mA^{\tran} \mS^{\tran})^{\dagger} \vxi \geq \vxi^{\tran} (\mS \mA \mB^{-1} \mA^{\tran} \mS^{\tran})^{\dagger} \vxi
\end{equation}
for all $\vxi = \mS \mA(\vx^k - \vx^*) = \mS \mA \mB^{-1/2} \mP \mB^{1/2}(\vx^k - \vx^*)$, recalling that $\mB^{1/2}(\vx^k - \vx^*) \in \range(\mP)$ by Lemma~\ref{lem:error_invariance}.
Since $\mP \preceq \mI$ in the psd order, we immediately have that
\begin{equation} \label{eq:comparison_vanilla_3}
    \mS \mA \mB^{-1/2} \mP \mB^{-1/2} \mA^{\tran} \mS^{\tran} \preceq \mS \mA \mB^{-1} \mA^{\tran} \mS^{\tran}.
\end{equation}
Now, let $\mathcal{L} := \nullspace(\mP \mB^{-1/2} \mA^{\tran} \mS^{\tran})$. Observe that $\vxi \in \range(\mS \mA \mB^{-1/2} \mP) = \mathcal{L}^{\perp}$ and
\[
    \nullspace(\mS \mA \mB^{-1} \mA^{\tran} \mS^{\tran}) = \nullspace(\mB^{-1/2} \mA^{\tran} \mS^{\tran}) \subseteq \mathcal{L} = \nullspace(\mS \mA \mB^{-1/2} \mP \mB^{-1/2} \mA^{\tran} \mS^{\tran}),
\]
and so
$
    \left( \left. \mS \mA \mB^{-1/2} \mP \mB^{-1/2} \mA^{\tran} \mS^{\tran} \right|_{\mathcal{L}^{\perp}} \right)^{\dagger} \succeq \left( \left. \mS \mA \mB^{-1} \mA^{\tran} \mS^{\tran} \right|_{\mathcal{L}^{\perp}} \right)^{\dagger},
$ 
which implies~\eqref{eq:comparison_vanilla_2}.
\end{proof}

We will now prove that the subspace-constrained sketch-and-project method converges (in mean squared error and the $\mB$-norm) with a rate that depends on the eigenvalue spectrum of the expected projection matrix $\E[\mZ]$, where $\mZ$ is defined in~\eqref{z-def} and the expectation is taken over the distribution of the random sketching matrix $\mS$.

\begin{theorem} \label{thm:convergence_rate}
Suppose that the same notation as Lemma~\ref{lem:update} is used. Assume that the following exactness condition holds:
\begin{equation} \label{eq:exactness}
    \nullspace(\E[\mZ]) = \nullspace(\mA \mB^{-1/2} \mP).
\end{equation}
Then the subspace-constrained sketch-and-project iterates $\{ \vx^k \}_{k \geq 0}$ satisfy
\[
    \E \norm{\vx^k - \vx^*}_{\mB}^2 \leq \bigl( 1 - \lambda^+_{\mathrm{min}}(\E[\mZ]) \bigr)^k \cdot \norm{\vx^0 - \vx^*}_{\mB}^2,
\]
where
\[
    \lambda^+_{\mathrm{min}}(\E[\mZ]) = \min_{\substack{\vx \in \range(\mP \mB^{-1/2} \mA^{\tran})\\ \norm{\vx}_2 = 1}} \vx^{\tran} (\E[\mZ]) \vx.
\]
\end{theorem}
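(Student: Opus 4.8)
The plan is to reduce the convergence statement to a per-iteration contraction in the Euclidean norm after the change of variables $\vy^k := \mB^{1/2}(\vx^k - \vx^*)$, and then apply the exactness condition~\eqref{eq:exactness} to convert the expected error decrease into a genuine contraction factor. First I would take expectations in the per-iteration identity~\eqref{eq:error_decrease_1} from Lemma~\ref{lem:error_decrease}, conditioning on $\vx^k$. Writing $\vy^k = \mB^{1/2}(\vx^k - \vx^*)$, this gives
\begin{equation*}
    \E\bigl[\,\norm{\vy^{k+1}}_2^2 \mid \vx^k\,\bigr] = \norm{\vy^k}_2^2 - \ev[\norm{\mZ \vy^k}_2^2] = \norm{\vy^k}_2^2 - (\vy^k)^{\tran} \E[\mZ] \,\vy^k,
\end{equation*}
where the expectation over $\mZ$ passes inside the quadratic form since $\vy^k$ is fixed given $\vx^k$. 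Note $\norm{\vy^k}_2^2 = \norm{\vx^k - \vx^*}_{\mB}^2$, so this already expresses the conditional error decrease purely in terms of $\E[\mZ]$.

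The key step is to lower-bound the quadratic form $(\vy^k)^{\tran} \E[\mZ]\, \vy^k$ by $\lambda^+_{\mathrm{min}}(\E[\mZ]) \cdot \norm{\vy^k}_2^2$. This is where the invariant subspace property (Lemma~\ref{lem:error_invariance}) is essential: it guarantees $\vy^k \in \range(\mP \mB^{-1/2} \mA^{\tran})$ for all $k$, so the vector lives in the subspace over which the restricted minimum eigenvalue $\lambda^+_{\mathrm{min}}(\E[\mZ])$ is defined. I would verify that the exactness condition~\eqref{eq:exactness}, namely $\nullspace(\E[\mZ]) = \nullspace(\mA \mB^{-1/2} \mP)$, ensures that $\E[\mZ]$ acts on $\range(\mP \mB^{-1/2} \mA^{\tran})$ with no zero eigenvalues other than those already excluded — more precisely, that $\range(\mP \mB^{-1/2} \mA^{\tran})$ is (up to the relevant orthogonal complement) exactly the subspace orthogonal to $\nullspace(\E[\mZ])$, so that the Rayleigh quotient of $\E[\mZ]$ restricted to this subspace is bounded below by $\lambda^+_{\mathrm{min}}(\E[\mZ]) > 0$. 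Combining, we obtain
\begin{equation*}
    \E\bigl[\,\norm{\vx^{k+1} - \vx^*}_{\mB}^2 \mid \vx^k\,\bigr] \leq \bigl(1 - \lambda^+_{\mathrm{min}}(\E[\mZ])\bigr) \norm{\vx^k - \vx^*}_{\mB}^2.
\end{equation*}
Taking total expectations and iterating over $k$ yields the claimed geometric decay.

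The main obstacle I anticipate is the careful bookkeeping around the restricted eigenvalue and the exactness condition. One must check that $\range(\mP \mB^{-1/2} \mA^{\tran})$ is the correct invariant subspace on which $\E[\mZ]$ is positive definite, which requires matching up three subspaces: the range of the projector appearing in $\mZ$, the nullspace of $\E[\mZ]$, and the range guaranteed by Lemma~\ref{lem:error_invariance}. The inclusion $\nullspace(\mA \mB^{-1/2} \mP) \subseteq \nullspace(\E[\mZ])$ always holds since each realization $\mZ$ is a projector onto a subspace of $\range(\mP \mB^{-1/2} \mA^{\tran} \mS^{\tran}) \subseteq \range(\mP \mB^{-1/2} \mA^{\tran})$; the exactness condition supplies the reverse inclusion, ruling out spurious directions where $\E[\mZ]$ vanishes. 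A secondary subtlety is confirming that the variational formula given for $\lambda^+_{\mathrm{min}}(\E[\mZ])$ is consistent with the standard smallest-nonzero-eigenvalue definition — this follows because $\E[\mZ]$ is symmetric psd (being an average of orthogonal projectors) and its range is contained in $\range(\mP \mB^{-1/2} \mA^{\tran})$, so minimizing the Rayleigh quotient over unit vectors in that range recovers exactly the smallest nonzero eigenvalue under exactness.
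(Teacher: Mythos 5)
Your proposal is correct and follows essentially the same route as the paper's proof: take the conditional expectation of the error-decrease identity from Lemma~\ref{lem:error_decrease} (using that $\norm{\mZ \vy^k}_2^2 = (\vy^k)^{\tran} \mZ \vy^k$ since $\mZ$ is an orthogonal projector), invoke Lemma~\ref{lem:error_invariance} together with the exactness condition~\eqref{eq:exactness} to identify $\range(\mP \mB^{-1/2} \mA^{\tran}) = \nullspace(\mA \mB^{-1/2} \mP)^{\perp}$ with the span of eigenvectors of $\E[\mZ]$ having positive eigenvalues, and iterate via the tower rule. Your subspace bookkeeping at the end matches the paper's argument exactly, so there is nothing to add.
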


\begin{proof}[Proof of Theorem~\ref{thm:convergence_rate}]
By considering the decrease in error in the $(k+1)$st iteration from Lemma~\ref{lem:error_decrease}, taking the expectation conditional on all the randomness up to the $k$th iteration, which we denote by $\E_k$, and using the linearity of expectation, we obtain
\[
    \E_k \norm{\vx^{k+1} - \vx^*}_{\mB}^2 = \norm{\vx^k - \vx^*}_{\mB}^2 - (\vx^k - \vx^*)^{\tran} \mB^{1/2} (\E[\mZ]) \mB^{1/2} (\vx^k - \vx^*).
\]
By Lemma~\ref{lem:error_invariance}, the vectors $\mB^{1/2}(\vx^k - \vx^*) \in \range(\mP \mB^{-1/2} \mA^{\tran}) = \nullspace(\mA \mB^{-1/2} \mP)^{\perp}$ for all $k \geq 0$. Since $\nullspace(\E[\mZ]) = \nullspace(\mA \mB^{-1/2} \mP)$ under the assumption~\eqref{eq:exactness}, we may expand $\mB^{1/2}(\vx^k - \vx^*)$ in the orthonormal basis of eigenvectors of $\E[\mZ]$ corresponding to positive eigenvalues only, and so
\[
    (\vx^k - \vx^*)^{\tran} \mB^{1/2} (\E[\mZ]) \mB^{1/2} (\vx^k - \vx^*) \geq \lambda_{\mathrm{min}}^+(\E[\mZ]) \cdot \norm{\mB^{1/2}(\vx^k - \vx^*)}_2^2.
\]
Hence, the decrease in error in the $(k+1)$st iteration can be bounded by
\[
    \E_k \norm{\vx^{k+1} - \vx^*}_{\mB}^2 \leq \bigl( 1 - \lambda_{\mathrm{min}}^+(\E[\mZ]) \bigr) \cdot \norm{\vx^k - \vx^*}_{\mB}^2.
\]
We conclude by iterating and using the tower rule for conditional expectations.
\end{proof}

\begin{remark}[Exactness condition]
The assumption~\eqref{eq:exactness} is a technical condition to ensure that the convergence rate is (strictly) positive. It is similar to an exactness condition from the sketch-and-project literature~\cite{RichtarikTakac2020, GowerMoMoNe2021, GowerRichtarik2015ascent}, which relaxes stronger requirements such as $\mA$ having full column rank, and holds for most practical sketching techniques. 
Intuitively, the exactness condition fails to hold if the distribution of sketching matrices $\mS \sim \mathcal{D}$ does not cover the entire space. For example, if the updates are sampled from the same low-rank subspace, then $\vx^0 - \vx^*$ may have components that are unable to be resolved.
\end{remark}

\begin{remark}[$\mB$-inner product geometry and oblique projections] \label{rmk:update_B}
The natural geometry of the sketch-and-project method is defined by the $\mB$-inner product $\innprod{\vx}{\vy}_{\mB} := \vx^{\tran} \mB \vy$ corresponding to the positive definite $\mB \in \reals^{n \times n}$.
By expressing the projectors $\mP$ and $\mZ$ from Lemma~\ref{lem:update} in terms of this geometry via a similarity transformation, the update formulas for the subspace-constrained sketch-and-project method admit the following equivalent, more natural expressions.
Specifically, let
\begin{equation} \label{eq:oblique_projectors}
    \mP_{\mB} := \mB^{-1/2} \mP \mB^{1/2}
    \quad \text{and} \quad
    \mZ_{\mB} := \mB^{-1/2} \mZ \mB^{1/2}
\end{equation}
be the \emph{oblique} projection matrices onto $\nullspace(\mQ \mA)$ and $\range(\mB^{-1} \mP_B^{\tran} \mA^{\tran} \mS^{\tran})$ with respect to the $\mB$-norm respectively.\footnote{Note that if $\mPi$ is the orthogonal projector onto some subspace $\mathcal{M}$, then $\mPi_{\mB} = \mB^{-1/2} \mPi \mB^{1/2}$ is the orthogonal projector onto $\mathcal{M}$ with respect to the $\mB$-norm (i.e., $\mPi_{\mB} \vx = \argmin_{\vy \in \mathcal{M}} \norm{\vx - \vy}_{\mB}$).}
Note that $\mP_{\mB} \mB^{-1} \mP_{\mB}^{\tran} = \mP_{\mB} \mB^{-1} = \mB^{-1} \mP_{\mB}^{\tran}$ and $\mP_{\mB} (\vx^k - \vx^*) = \vx^k - \vx^*$.
Then~\eqref{eq:lem_update_1} and~\eqref{eq:lem_update_2} in Lemma~\ref{lem:update} can be written as
\begin{align}
    \vx^{k+1} &= \vx^k - \mB^{-1} \mP_{\mB}^{\tran} \mA^{\tran} \mS^{\tran} (\mS \mA \mP_{\mB} \mB^{-1} \mP_{\mB}^{\tran} \mA^{\tran} \mS^{\tran})^{\dagger} \mS(\mA \vx^k - \vb), \\
    \vx^{k+1} - \vx^* &= (\mI - \mZ_{\mB})(\vx^k - \vx^*).
\end{align}
These expressions are almost the same as~\eqref{eq:sketch_and_project_update} and~\eqref{eq:sketch_and_project_fixedpoint} for the unconstrained sketch-and-project method using a ``new matrix'' $\mA \mP_{\mB}$ in place of $\mA$, and the same sketched residuals $\mS(\mA \vx^k - \vb)$.
Similarly, the error decrease in Lemma~\ref{lem:error_decrease} can be written in terms of the $\mB$-norm as
\[
    \norm{\vx^{k+1} - \vx^*}_{\mB}^2 = \norm{\vx^k - \vx^*}_{\mB}^2 - \norm{\mZ_{\mB} (\vx^k - \vx^*)}_{\mB}^2.
\]
\end{remark}

\begin{remark}[Connections with the nullspace method]
Observe that to solve $\mA \vx = \vb$, we can first solve $\mQ \mA \vx^0 = \mQ \vb$ for $\vx^0$. Then, given $\vx^0$, we can (approximately) solve $\mA \mP_\mB \vw \approx \vb - \mA \vx^0$ for $\vw \in \range(\mP_\mB) = \nullspace(\mQ \mA)$. The overall solution can then be constructed as $\vx^1 = \vx^0 + \vw$, which continues to satisfy the constraints $\mQ \mA \vx^1 = \mQ \vb$.
In our setting, we form $\vx^1$ by solving the sketched system $\mS \mA \mP_\mB \vw = \mS(\vb - \mA \vx^0)$, and iterate this procedure to obtain the iterates $\{ \vx^k \}_{k \geq 0}$.

This setup resembles the nullspace approach for solving least squares problems with linear equality constraints (e.g., see~\cite{ScottTuma2022}), and illuminates the observation in Remark~\ref{rmk:update_B} that the subspace-constrained updates parallel the unconstrained updates using a projected version of the matrix, $\mA \mP_{\mB}$, in place of $\mA$.
In our context, the main difference is that the subspace constraint $\mQ \mA \vx = \mQ \vb$ is not specified by the problem a priori, but generated algorithmically from the data. For our purposes, \emph{the subspace constraint effectively acts as a preconditioner}: ideally, the properties of $\mA \mP_{\mB}$ should improve the downstream iterative process in a suitable way.
For instance, for the SC-RCD method, we will seek $\mQ$ such that $\mA \mP_\mB$ is a good low-rank approximation of $\mA$ in trace-norm.
\end{remark}

\subsection{Convergence rate and block size}

In many cases, $\E[\mZ]$ is difficult to compute or even estimate. However, the expected projection matrix $\E[\mZ_1]$ corresponding to the sketching matrix $\mS_1$ with a single row is often exactly computable.

In this section, we consider the special class of sketching matrices where $\mS \in \reals^{\ell \times m}$ consists of $\ell$ independent and identically distributed rows $\vs_1, \ldots, \vs_{\ell} \in \reals^m$.
This encompasses a range of practical sketching schemes, including block versions of the randomized Kaczmarz~\cite{StrohmerVershynin2009, NeedellTropp2014} and coordinate descent~\cite{LeventhalLewis2010} methods. The following result shows that in this setting, the iteration complexity improves at least linearly in the block size $\ell$ compared to the single-row case; i.e., if using $\mS_1 = \vs_1^{\tran}$ requires $\tau$ iterations to reach a desired tolerance in mean squared error, then using $\mS$ requires at most $\tau / \ell$ iterations.

\begin{proposition} \label{prop:convergence_rate_block}
Suppose that $\mS_{\ell} \in \reals^{\ell \times m}$ is a matrix with i.i.d.\ random rows $\vs_1, \ldots, \vs_\ell \in \reals^{m}$, and let $\mZ_{1} = (\vs_1^{\tran} \mA \mB^{-1/2} \mP)^{\dagger} \vs_1^{\tran} \mA \mB^{-1/2} \mP$ be the orthogonal projector onto $\range(\mP \mB^{-1/2} \mA^{\tran} \mS_{1}^{\tran})$.
Assume that $\nullspace(\E[\mZ_1]) = \nullspace(\mA \mB^{-1/2} \mP)$.
If $\{ \vx^k \}_{k \geq 1}$ are the subspace-constrained sketch-and-project iterates with sketching matrix $\mS_{\ell}$, then
\begin{equation} \label{eq:convergence_rate_block:2}
    \E \norm{\vx^k - \vx^*}_{\mB}^2 \leq \bigl(1 - \lambda^+_{\mathrm{min}}(\E[\mZ_1]) \bigr)^{\ell k} \cdot \norm{\vx^0 - \vx^*}_{\mB}^2.
\end{equation}
\end{proposition}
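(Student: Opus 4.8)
The plan is to combine the single-iteration analysis of Theorem~\ref{thm:convergence_rate} with an elementary comparison between a block projection and a sequence of single-row projections. By Lemma~\ref{lem:error_decrease} together with the fixed-point form~\eqref{eq:lem_update_2}, it suffices to control one block step: writing $\vw := \mB^{1/2}(\vx^k - \vx^*)$ and letting $\mZ_{\ell}$ be the orthogonal projector onto $\range(\mP \mB^{-1/2} \mA^{\tran} \mS_{\ell}^{\tran})$, we have $\norm{\mB^{1/2}(\vx^{k+1} - \vx^*)}_2^2 = \norm{(\mI - \mZ_{\ell}) \vw}_2^2$. By Lemma~\ref{lem:error_invariance} the vector $\vw$ lies in $\mathcal{R} := \range(\mP \mB^{-1/2} \mA^{\tran})$, so the goal reduces to establishing the one-step contraction $\E[\norm{(\mI - \mZ_{\ell}) \vw}_2^2] \leq (1 - \lambda^+_{\mathrm{min}}(\E[\mZ_1]))^{\ell} \norm{\vw}_2^2$ for $\vw \in \mathcal{R}$; iterating this with the tower rule exactly as in Theorem~\ref{thm:convergence_rate} then yields~\eqref{eq:convergence_rate_block:2}.

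First I would record the key geometric inequality. For each row $\vs_i$ of $\mS_{\ell}$, let $\mZ_1^{(i)}$ be the orthogonal projector onto the line $V_i := \range(\mP \mB^{-1/2} \mA^{\tran} \vs_i)$, and set $\mM_i := \mI - \mZ_1^{(i)}$. Since $\range(\mP \mB^{-1/2} \mA^{\tran} \mS_{\ell}^{\tran}) = V_1 + \cdots + V_{\ell}$, the residual $(\mI - \mZ_{\ell}) \vw$ realizes the minimal Euclidean distance from $\vw$ to this sum, whereas the sequential residual $\mM_{\ell} \cdots \mM_1 \vw$ differs from $\vw$ only by an element of $V_1 + \cdots + V_{\ell}$ (each projection step subtracts a vector lying in one of the $V_i$). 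Optimality of the orthogonal projection onto the full sum then gives the pointwise bound $\norm{(\mI - \mZ_{\ell}) \vw}_2 \leq \norm{\mM_{\ell} \cdots \mM_1 \vw}_2$. This is precisely where the block update is seen to be at least as effective as sweeping through the $\ell$ rows one at a time.

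Next I would peel off the expectation over the i.i.d.\ rows one factor at a time. Because each $V_i \subseteq \mathcal{R}$, every $\mM_i$ maps $\mathcal{R}$ into itself, so all partial products $\mM_i \cdots \mM_1 \vw$ remain in $\mathcal{R}$. The exactness hypothesis $\nullspace(\E[\mZ_1]) = \nullspace(\mA \mB^{-1/2} \mP) = \mathcal{R}^{\perp}$ shows that on $\mathcal{R}$ the symmetric psd matrix $\E[\mZ_1]$ has all eigenvalues at least $\lambda^+_{\mathrm{min}}(\E[\mZ_1])$, so that $\vu^{\tran}(\mI - \E[\mZ_1]) \vu \leq (1 - \lambda^+_{\mathrm{min}}(\E[\mZ_1])) \norm{\vu}_2^2$ for every $\vu \in \mathcal{R}$. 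Conditioning on $\vs_1, \ldots, \vs_{\ell-1}$ and using that $\mM_{\ell}$ is an independent orthogonal projector with $\E[\mM_{\ell}^{\tran} \mM_{\ell}] = \mI - \E[\mZ_1]$, the innermost expectation contracts the squared norm by the factor $1 - \lambda^+_{\mathrm{min}}(\E[\mZ_1])$; since every partial product lies in $\mathcal{R}$, iterating over the $\ell$ independent rows yields $\E[\norm{\mM_{\ell} \cdots \mM_1 \vw}_2^2] \leq (1 - \lambda^+_{\mathrm{min}}(\E[\mZ_1]))^{\ell} \norm{\vw}_2^2$.

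Combining the geometric inequality with this peeling bound establishes the one-step contraction, and the proposition follows. I expect the main obstacle to be the geometric comparison $\norm{(\mI - \mZ_{\ell}) \vw}_2 \leq \norm{\mM_{\ell} \cdots \mM_1 \vw}_2$: one must argue carefully that sequential single-row projections leave a residual in $\vw - (V_1 + \cdots + V_{\ell})$ and then invoke optimality of the orthogonal projection onto the full sum, while separately verifying the range-invariance $\mM_i \mathcal{R} \subseteq \mathcal{R}$ so that the restricted eigenvalue bound legitimately applies at each peeling step. The independence and identical distribution of the rows is essential here, as it is what makes every peeled factor contribute the same rate $1 - \lambda^+_{\mathrm{min}}(\E[\mZ_1])$.
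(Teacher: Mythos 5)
Your proof is correct, but it takes a genuinely different route from the paper's. The paper works at the level of expected projectors: it introduces the intermediate block projectors $\mZ_t$ onto $\range(\mP \mB^{-1/2} \mA^{\tran} \mS_t^{\tran})$ and applies the rank-one update formula for orthogonal projectors (Lemma~\ref{lem:projector_rankone_update}, based on Meyer's pseudoinverse update) to write $\mZ_t - \mZ_{t-1}$ as an explicit rank-one increment; bounding the increment's denominator by $\vs_t^{\tran} \mA \mB^{-1/2} \mP \mB^{-1/2} \mA \vs_t$ and taking conditional expectation over $\vs_t$ gives, by induction, $\vy^{\tran} \E[\mZ_\ell] \vy \geq 1 - (1 - \lambda^+_{\mathrm{min}}(\E[\mZ_1]))^{\ell}$ for unit $\vy \in \range(\mP \mB^{-1/2} \mA^{\tran})$, which is then plugged into Theorem~\ref{thm:convergence_rate} (after checking exactness for $\mZ_\ell$ via $\mZ_\ell \succeq \mZ_1$). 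You bypass the projector calculus entirely: your deterministic comparison $\norm{(\mI - \mZ_\ell)\vw}_2 \leq \norm{\mM_\ell \cdots \mM_1 \vw}_2$ is valid, since each factor subtracts a vector in some $V_i$, so the sequential residual is $\vw$ minus an element of $V_1 + \cdots + V_\ell$, while $(\mI - \mZ_\ell)\vw$ is the optimal such perturbation; the peeling step is also sound, because $\mM_\ell$ is a projector independent of the earlier rows with $\E[\mM_\ell^{\tran}\mM_\ell] = \E[\mM_\ell] = \mI - \E[\mZ_1]$, each $V_i \subseteq \mathcal{R}$ gives the invariance $\mM_i \mathcal{R} \subseteq \mathcal{R}$, and $\mathcal{R} = \nullspace(\mA \mB^{-1/2} \mP)^{\perp}$ is exactly where the restricted eigenvalue bound for $\E[\mZ_1]$ applies under the exactness hypothesis. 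Note, moreover, that since $\mI - \mZ_\ell$ is a projector, your one-step bound applied to unit $\vy \in \mathcal{R}$ recovers the paper's spectral inequality verbatim ($1 - \vy^{\tran}\E[\mZ_\ell]\vy = \E\norm{(\mI - \mZ_\ell)\vy}_2^2$), and your direct one-step-plus-tower argument, with Lemma~\ref{lem:error_invariance} supplying $\vw \in \mathcal{R}$ at every iteration, lets you skip verifying exactness for $\mZ_\ell$ separately. What each approach buys: yours is more elementary (no pseudoinverse update formula) and makes transparent why one block step is at least as effective as sweeping through the $\ell$ rows sequentially, in the spirit of the coupling argument in Remark~\ref{rmk:sampling_without_replacement}; the paper's telescoping decomposition is an exact identity at each step and delivers the eigenvalue bound on $\E[\mZ_\ell]$ as a standalone spectral statement that slots directly into Theorem~\ref{thm:convergence_rate}.
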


The key technical ingredient behind Proposition~\ref{prop:convergence_rate_block} is the following rank-one update formula for orthogonal projection matrices from~\cite{DeLiLiMa2020}, which is derived using a rank-one update formula for the Moore--Penrose pseudoinverse (\cite[Theorem~1]{Meyer1973}).

\begin{lemma}[{\cite[Lemma~1]{DeLiLiMa2020}}] \label{lem:projector_rankone_update}
Let $\mX \in \reals^{t \times n}$ and $\mX_{-t} \in \reals^{(t-1) \times n}$ be the matrix $\mX$ without its last row $\vx_t \in \reals^n$. Suppose that $\mPi = \mX^{\dagger} \mX$ and $\mPi_{-t} = \mX_{-t}^{\dagger} \mX_{-t}$ are the orthogonal projectors onto the ranges of $\mX^{\tran}$ and $\mX_{-t}^{\tran}$ respectively.
If $\vx_t^{\tran} (\mI - \mPi_{-t}) \vx_t \ne \vzero$, then 
\[
    \mPi - \mPi_{-t} = \frac{(\mI - \mPi_{-t}) \vx_t \vx_t^{\tran} (\mI - \mPi_{-t})}{\vx_t^{\tran} (\mI - \mPi_{-t}) \vx_t}.
\]
Otherwise, if $\vx_t^{\tran} (\mI - \mPi_{-t}) \vx_t = 0$, then $\vx_t \in \range(\mX_{-t}^{\tran})$ and $\mPi = \mPi_{-t}$, so the decomposition above also holds provided that the right hand side is interpreted as zero.
\end{lemma}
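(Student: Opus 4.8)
The plan is to prove the identity by directly verifying that a suitable candidate matrix equals the orthogonal projector $\mPi$ onto $\range(\mX^\tran)$ (the row space of $\mX$), and then invoking the uniqueness of orthogonal projectors onto a given subspace. Write $\vu := (\mI - \mPi_{-t}) \vx_t$ for the component of $\vx_t$ orthogonal to $\range(\mX_{-t}^\tran)$. Since $\mI - \mPi_{-t}$ is symmetric and idempotent, $\vu \vu^\tran = (\mI - \mPi_{-t}) \vx_t \vx_t^\tran (\mI - \mPi_{-t})$ and $\vu^\tran \vu = \vx_t^\tran (\mI - \mPi_{-t}) \vx_t$, so the nondegeneracy hypothesis is precisely $\vu \ne \vzero$, and the right-hand side of the claim equals $\vu \vu^\tran / (\vu^\tran \vu)$, the orthogonal projector onto $\rmspan(\vu)$. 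Setting $\mM := \mPi_{-t} + \vu \vu^\tran / (\vu^\tran \vu)$, it therefore suffices to prove $\mM = \mPi$.

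First I would record the key orthogonality fact $\mPi_{-t} \vu = \vzero$, which holds because $\vu \in \nullspace(\mPi_{-t}) = \range(\mX_{-t}^\tran)^\perp$. Symmetry of $\mM$ is then immediate. For idempotence, I would expand $\mM^2$: the cross terms $\mPi_{-t} (\vu \vu^\tran)$ and $(\vu \vu^\tran) \mPi_{-t}$ vanish by $\mPi_{-t} \vu = \vzero$, while $(\vu \vu^\tran)^2 = (\vu^\tran \vu)\, \vu \vu^\tran$ collapses the rank-one term, leaving $\mM^2 = \mPi_{-t} + \vu \vu^\tran / (\vu^\tran \vu) = \mM$.

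For the range I would argue two inclusions. On one hand $\range(\mM) \subseteq \range(\mX^\tran)$, since $\mPi_{-t}$ has range $\range(\mX_{-t}^\tran) \subseteq \range(\mX^\tran)$ (the rows of $\mX_{-t}$ are rows of $\mX$) and $\rmspan(\vu) \subseteq \range(\mX^\tran)$ (as $\vu = \vx_t - \mPi_{-t} \vx_t$ is a combination of rows of $\mX$). On the other hand, $\mM$ fixes $\range(\mX_{-t}^\tran)$ pointwise and $\mM \vx_t = \mPi_{-t} \vx_t + \vu = \vx_t$, so $\range(\mM)$ contains $\range(\mX_{-t}^\tran)$ together with $\vx_t$, hence all of $\range(\mX^\tran)$. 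Since a symmetric idempotent matrix is the orthogonal projector onto its own range, $\mM = \mPi$, which is the claimed formula.

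Finally, the degenerate case $\vx_t^\tran (\mI - \mPi_{-t}) \vx_t = 0$, i.e.\ $\vu = \vzero$, forces $\vx_t = \mPi_{-t} \vx_t \in \range(\mX_{-t}^\tran)$, so appending the row $\vx_t^\tran$ does not enlarge the row space, giving $\range(\mX^\tran) = \range(\mX_{-t}^\tran)$ and $\mPi = \mPi_{-t}$, in agreement with the right-hand side read as zero. The only point requiring care is the range computation, specifically the orthogonal decomposition $\range(\mX^\tran) = \range(\mX_{-t}^\tran) \oplus \rmspan(\vu)$ arising from $\vx_t = \mPi_{-t} \vx_t + \vu$; this is mild but must be stated cleanly to make both inclusions line up, and it is the main (if modest) obstacle. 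An alternative route, matching the attribution in the text, applies Meyer's rank-one pseudoinverse update to $\mX^\dagger$ and simplifies $\mX^\dagger \mX$, but the direct projector verification above is shorter and more transparent.
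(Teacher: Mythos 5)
Your proof is correct, but it takes a genuinely different route from the paper. The paper does not prove this lemma at all: it imports it verbatim from \cite{DeLiLiMa2020}, and, as the surrounding text notes, that source derives it from Meyer's rank-one update formula for the Moore--Penrose pseudoinverse (\cite[Theorem~1]{Meyer1973}), applying the update to $\mX^{\dagger}$ and then simplifying the product $\mX^{\dagger}\mX$ --- exactly the alternative you mention in your closing sentence. Your argument instead verifies directly that $\mM := \mPi_{-t} + \vu\vu^{\tran}/(\vu^{\tran}\vu)$, with $\vu = (\mI - \mPi_{-t})\vx_t$, is symmetric, idempotent (the cross terms die because $\mPi_{-t}\vu = \mPi_{-t}(\mI-\mPi_{-t})\vx_t = \vzero$ by idempotence of $\mPi_{-t}$), and has range $\range(\mX^{\tran})$, the last point following since $\mM$ fixes $\range(\mX_{-t}^{\tran})$ pointwise and $\mM\vx_t = \mPi_{-t}\vx_t + \vu = \vx_t$ via the identity $\vu^{\tran}\vx_t = \vx_t^{\tran}(\mI-\mPi_{-t})\vx_t = \vu^{\tran}\vu$; uniqueness of the orthogonal projector onto a given subspace then gives $\mM = \mPi$. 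All steps check out, including the degenerate case and the decomposition $\range(\mX^{\tran}) = \range(\mX_{-t}^{\tran}) \oplus \rmspan(\vu)$ that you correctly flag as the point needing a clean statement. As for what each approach buys: your verification is self-contained and elementary, and it sidesteps the case analysis built into Meyer's theorem (which distinguishes several configurations of the appended row relative to the existing row space), whereas the pseudoinverse route yields strictly more information --- an explicit update for $\mX^{\dagger}$ itself rather than only for the projector $\mX^{\dagger}\mX$. That extra generality is useful elsewhere but is not consumed here: the proof of Proposition~\ref{prop:convergence_rate_block} uses only the projector decomposition, so your shorter direct argument would serve the paper equally well.
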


We can now prove Proposition~\ref{prop:convergence_rate_block} using Lemma~\ref{lem:projector_rankone_update} to decompose the orthogonal projector $\mZ$ into a sum of rank-one projections onto a growing sequence of subspaces.

\begin{proof}[Proof of Proposition~\ref{prop:convergence_rate_block}]
It will be useful to define the sketching matrices for all intermediate block sizes. For $1 \leq t \leq \ell$, let $\mS_t \in \reals^{t \times m}$ be the matrix with rows $\vs_1, \ldots, \vs_t$, and $\mX_t := \mS_{t} \mA \mB^{-1/2} \mP \in \reals^{t \times n}$ be the matrix with rows $\vx_1, \ldots, \vx_t$, where $\vx_i := \vs_i^{\tran} \mA \mB^{-1/2} \mP$. Furthermore, let $\mZ_{t} = \mX_{t}^{\dagger} \mX_{t}$ be the orthogonal projector onto the range of $\mX_{t}^{\tran}$ (with $\mZ_0 := \mzero$).
In particular, we get the following representation
\begin{equation} \label{eq:convergence_rate_block_pf:sizeone_proj}
    \mX_1^{\dagger} \mX_1 = \frac{\mP \mB^{-1/2} \mA \vs_{1} \vs_{1}^{\tran} \mA \mB^{-1/2} \mP}{\vs_{1}^{\tran} \mA \mB^{-1/2} \mP \mB^{-1/2} \mA \vs_{1}}.
\end{equation}
Next, observe that the exactness condition $\nullspace(\E[\mZ_{t}]) = \nullspace(\mA \mB^{-1/2} \mP)$ holds for all $\mZ_t$. Indeed, $\mZ_{t} \succeq \mZ_{1}$ and by the exactness assumption on $\mZ_1$, \[
    \nullspace(\E[\mZ_{t}]) \subseteq \nullspace(\E[\mZ_{1}]) = \nullspace(\mA \mB^{-1/2} \mP) \subseteq \nullspace(\E[\mZ_{t}]).
\]
Hence, all the inclusions are equalities.

We claim that for all $1 \leq t \leq \ell$ and unit vectors $\vy \in \range(\mP \mB^{-1/2} \mA^{\tran})$,
\begin{equation} \label{eq:convergence_rate_block_pf:claim}
    \vy^{\tran} \E[\mZ_{t}] \vy \geq 1 - \bigl( 1 - \lambda_{\mathrm{min}}^+(\E[\mZ_1]) \bigr)^{t}.
\end{equation}
We will prove~\eqref{eq:convergence_rate_block_pf:claim} by induction on $t$. When $t = 1$, the base case $\vy^{\tran} \E[\mZ_1] \vy \geq \lambda_{\mathrm{min}}^+(\E[\mZ_1])$ follows because $\vy \in \range(\E[\mZ_1])$.
Assuming that~\eqref{eq:convergence_rate_block_pf:claim} holds for $t - 1 < \ell$, we want to prove that~\eqref{eq:convergence_rate_block_pf:claim} holds for $t$.
Using Lemma~\ref{lem:projector_rankone_update}, we have the following decomposition of $\mZ_{t}$:
\begin{equation} \label{eq:convergence_rate_block_pf:decomp}
    \mZ_{t}
    = \mZ_{t-1} + (\mZ_{t} - \mZ_{t-1})
    = \mZ_{t-1} + \frac{(\mI - \mZ_{t-1}) \mP \mB^{-1/2} \mA \vs_{t} \vs_{t}^{\tran} \mA \mB^{-1/2} \mP (\mI - \mZ_{t-1})}{\vs_{t}^{\tran} \mA \mB^{-1/2} \mP (\mI - \mZ_{t-1}) \mP \mB^{-1/2} \mA \vs_{t}}.
\end{equation}
Observe that $\vs_{t}^{\tran} \mA \mB^{-1/2} \mP (\mI - \mZ_{t-1}) \mP \mB^{-1/2} \mA \vs_{t} \leq \vs_{t}^{\tran} \mA \mB^{-1/2} \mP \mB^{-1/2} \mA \vs_{t}$. Hence, by taking expectation over the randomness in $\vs_{t}$ only, conditional on $\vs_1, \dots, \vs_{t-1}$, and using linearity of expectation, we obtain
\begin{align*}
    &\vy^{\tran} \E_{\vs_t \mid \vs_1, \ldots, \vs_{t-1}}[\mZ_{t}] \vy \\
    &\quad\geq \vy^{\tran} \mZ_{t-1} \vy + \vy^{\tran} (\mI - \mZ_{t-1}) \E_{\vs_{t} \mid \vs_1, \ldots, \vs_{t-1}} \left[ \frac{\mP \mB^{-1/2} \mA \vs_{t} \vs_{t}^{\tran} \mA \mB^{-1/2} \mP}{\vs_{t}^{\tran} \mA \mB^{-1/2} \mP \mB^{-1/2} \mA \vs_{t}} \right] (\mI - \mZ_{t-1}) \vy \\
    &\quad= \vy^{\tran} \mZ_{t-1} \vy + \vy^{\tran} (\mI - \mZ_{t-1}) \E[\mZ_1] (\mI - \mZ_{t-1}) \vy.
\end{align*}
The last equality follows from comparing the random matrix inside the inner expectation to~\eqref{eq:convergence_rate_block_pf:sizeone_proj} and using the fact that $\vs_{t}$ has the same distribution as $\vs_1$.
Therefore, since $(\mI - \mZ_{t-1}) \vy \in \range(\mP \mB^{-1/2} \mA^{\tran})$ by definition of $\mZ_{t}$, and $\mZ_{t-1}$ is an orthogonal projector, this implies that
\begin{align*}
    \vy^{\tran} \E_{\vs_t \mid \vs_1, \ldots, \vs_{t-1}}[\mZ_{t}] \vy
    \geq \vy^{\tran} \mZ_{t-1} \vy + \lambda_{\mathrm{min}}^+(\E[\mZ_1]) \cdot \vy^{\tran} (\mI - \mZ_{t-1}) \vy.
\end{align*}
By taking the full expectation and using the induction hypothesis, we deduce that
\begin{align*}
    \vy^{\tran} \E[\mZ_{t}] \vy
    &\geq \lambda_{\mathrm{min}}^+(\E[\mZ_1]) + \bigl( 1 - \lambda_{\mathrm{min}}^+(\E[\mZ_1]) \bigr) \cdot \vy^{\tran} \E[\mZ_{t-1}] \vy \\
    &\geq \lambda_{\mathrm{min}}^+(\E[\mZ_1]) + \bigl( 1 - \lambda_{\mathrm{min}}^+(\E[\mZ_1]) \bigr) \cdot \bigl( 1 - \bigl( 1 - \lambda_{\mathrm{min}}^+(\E[\mZ_1]) \bigr)^{t-1} \bigr) \\
    &= 1 - \bigl( 1 - \lambda_{\mathrm{min}}^+(\E[\mZ_1]) \bigr)^{t},
\end{align*}
and so, $1 - \lambda_{\mathrm{min}}^+(\E[\mZ_{\ell}]) \leq \bigl( 1 - \lambda_{\mathrm{min}}^+(\E[\mZ_1]) \bigr)^{\ell}$.
Employing Theorem~\ref{thm:convergence_rate} completes the proof.
\end{proof}

\paragraph{Example: Randomized block Kaczmarz.}
The subspace-constrained sketch-and-project method generalizes the \emph{subspace-constrained randomized Kaczmarz} (SCRK) method analyzed in~\cite{LokRebrova2024}, which can be described as follows.
Let $\mB = \mI$ and $\mQ = \ve_{\mathcal{S}}^{\tran}$ for some $\mathcal{S} \subseteq [m]$ so that the iterates are confined within the solution space of the subsystem $\mA_{\mathcal{S},:} \vx = \vb_{\mathcal{S}}$, and $\mP = \mI - (\mA_{\mathcal{S},:})^{\dagger} \mA_{\mathcal{S},:}$ be the projector onto $\nullspace(\mA_{\mathcal{S},:})$.
From Lemma~\ref{lem:update}, the updates of the SCRK method are given by
\[
    \vx^{k+1} = \vx^k - \frac{\va_j^{\tran} \vx^k - \vb_j}{\norm{\mP \va_j}_2^2} \mP \va_j.
\]
Suppose that each row $\va_j^{\tran} = \mA_{j,:}$ of the matrix $\mA$ is independently sampled with probability $\norm{\mP \va_j}_2^2 / \norm{\mA \mP}_F^2$ in each iteration. 
Since $\mZ = \mP \va_j \va_j^{\tran} \mP / \norm{\mP \va_j}_2^2$ if row $j$ is sampled, it follows that $\E[\mZ] = \mP \mA^{\tran} \mA \mP / \norm{\mA \mP}_F^2$. Note that the exactness condition~\eqref{eq:exactness} holds trivially because $\nullspace(\mP \mA^{\tran} \mA \mP) = \nullspace(\mA \mP)$.

More generally, if a block $\mathcal{J} \subseteq [m]$ of $\ell \geq 1$ rows are sampled independently as above, then the block update
\begin{equation} \label{eq:sc_block_rk_update}
    \vx^{k+1} = \vx^k - (\mA_{\mathcal{J},:} \mP)^{\dagger} (\mA_{\mathcal{J},:} \vx^k - \vb_{\mathcal{J}}).
\end{equation}
describes a subspace-constrained \emph{block} randomized Kaczmarz method. From Proposition~\ref{prop:convergence_rate_block}, the following convergence rate bound holds:

\begin{corollary} \label{cor:sc_block_rk_rate}
Let $\{ \vx^k \}_{k \geq 0}$ be the iterates defined by~\eqref{eq:sc_block_rk_update}, where $\vx^0$ solves $\mA_{\mathcal{S},:} \vx = \vb_{\mathcal{S}}$ and the block $\mathcal{J} = \{ j_1, \ldots, j_\ell \}$ in each iteration consists of $\ell$ rows independently sampled with the probabilities $\{ \norm{\mP \va_j}_2^2 / \norm{\mA \mP}_F^2 \}_{j=1}^m$. Then
\begin{equation} \label{eq:sc_block_rk_rate}
    \E \norm{\vx^k - \vx^*}_2^2 \leq \left( 1 - \frac{\sigma_{\mathrm{min}}^+(\mA \mP)^2}{\norm{\mA \mP}_F^2} \right)^{\ell k} \cdot \norm{\vx^0 - \vx^*}_2^2,
\end{equation}
where $\sigma^+_{\mathrm{min}}(\mA \mP)^2 = \lambda^+_{\mathrm{min}}(\mP \mA^{\tran} \mA \mP)$ is the smallest non-zero squared singular value of $\mA \mP$.
\end{corollary}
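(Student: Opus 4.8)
The plan is to recognize~\eqref{eq:sc_block_rk_update} as the instance of the subspace-constrained sketch-and-project method with $\mB = \mI$, subspace parameter $\mQ = \ve_{\mathcal{S}}^{\tran}$, and extremely sparse row-selection sketches $\mS = \ve_{\mathcal{J}}^{\tran}$, exactly as set up in the preceding example paragraph. Under this identification the projector $\mP = \mI - \mA_{\mathcal{S},:}^{\dagger}\mA_{\mathcal{S},:}$ onto $\nullspace(\mA_{\mathcal{S},:})$ is fixed, and the whole claim reduces to computing the expected single-row projector $\E[\mZ_1]$ and feeding its smallest positive eigenvalue into Proposition~\ref{prop:convergence_rate_block}. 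First I would confirm that a single sampled row $j$ (so $\mS_1 = \ve_j^{\tran}$ and $\vs_1^{\tran}\mA = \va_j^{\tran}$) yields $\mX_1 = \va_j^{\tran}\mP$ and hence, using $\mP = \mP^2 = \mP^{\tran}$,
\[
    \mZ_1 = \mX_1^{\dagger}\mX_1 = \frac{\mP\va_j\va_j^{\tran}\mP}{\norm{\mP\va_j}_2^2},
\]
which is precisely the rank-one projector recorded in the example.

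Next I would take the expectation over the sampling distribution $p_j = \norm{\mP\va_j}_2^2 / \norm{\mA\mP}_F^2$. The denominators cancel against the probabilities, giving
\[
    \E[\mZ_1] = \sum_{j=1}^m p_j \frac{\mP\va_j\va_j^{\tran}\mP}{\norm{\mP\va_j}_2^2} = \frac{\mP\left(\sum_{j=1}^m \va_j\va_j^{\tran}\right)\mP}{\norm{\mA\mP}_F^2} = \frac{\mP\mA^{\tran}\mA\mP}{\norm{\mA\mP}_F^2},
\]
using $\sum_j \va_j\va_j^{\tran} = \mA^{\tran}\mA$ and $\norm{\mA\mP}_F^2 = \tr(\mP\mA^{\tran}\mA\mP)$. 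Writing $\mP\mA^{\tran}\mA\mP = (\mA\mP)^{\tran}(\mA\mP)$ makes the exactness condition transparent: $\nullspace(\E[\mZ_1]) = \nullspace((\mA\mP)^{\tran}(\mA\mP)) = \nullspace(\mA\mP) = \nullspace(\mA\mB^{-1/2}\mP)$, so the hypothesis of Proposition~\ref{prop:convergence_rate_block} is met. The smallest positive eigenvalue is then read off directly as
\[
    \lambda^+_{\mathrm{min}}(\E[\mZ_1]) = \frac{\lambda^+_{\mathrm{min}}(\mP\mA^{\tran}\mA\mP)}{\norm{\mA\mP}_F^2} = \frac{\sigma^+_{\mathrm{min}}(\mA\mP)^2}{\norm{\mA\mP}_F^2}.
\]

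Finally I would invoke Proposition~\ref{prop:convergence_rate_block} with the i.i.d.\ block sketch $\mS_{\ell} = \ve_{\mathcal{J}}^{\tran}$ and $\mB = \mI$ (so the $\mB$-norm is the Euclidean norm), which immediately yields the stated rate~\eqref{eq:sc_block_rk_rate}. There is no genuine obstacle here; the corollary is essentially a substitution into the general framework. The only point requiring any care is the bookkeeping already implicit in the example paragraph — namely verifying that the framework's update~\eqref{eq:lem_update_1} with $\mS = \ve_{\mathcal{J}}^{\tran}$ collapses to the stated block-Kaczmarz form~\eqref{eq:sc_block_rk_update}, which follows from the pseudoinverse identity $(\mA_{\mathcal{J},:}\mP)^{\dagger} = \mP\mA_{\mathcal{J},:}^{\tran}(\mA_{\mathcal{J},:}\mP\mA_{\mathcal{J},:}^{\tran})^{\dagger}$, valid because $\mP$ is an orthogonal projector.
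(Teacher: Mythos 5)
Your proposal is correct and follows essentially the same route as the paper, which likewise computes the expected single-row projector $\E[\mZ_1] = \mP \mA^{\tran} \mA \mP / \norm{\mA \mP}_F^2$ under the row-norm sampling, notes that exactness holds trivially since $\nullspace(\mP \mA^{\tran} \mA \mP) = \nullspace(\mA \mP)$, and then invokes Proposition~\ref{prop:convergence_rate_block} with $\mB = \mI$. Your additional check that the framework update~\eqref{eq:lem_update_1} collapses to the block-Kaczmarz form~\eqref{eq:sc_block_rk_update} via $(\mA_{\mathcal{J},:}\mP)^{\dagger} = \mP\mA_{\mathcal{J},:}^{\tran}(\mA_{\mathcal{J},:}\mP\mA_{\mathcal{J},:}^{\tran})^{\dagger}$ is a valid detail that the paper leaves implicit.
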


By taking $\mP = \mI$, Corollary~\ref{cor:sc_block_rk_rate} implies Proposition~\ref{prop:block_rk_rate} for the convergence rate of the randomized block Kaczmarz method.

\section{Analysis of subspace-constrained randomized coordinate descent} \label{sec:sc-rcd}

In this section, we analyze the subspace-constrained randomized coordinate descent (SC-RCD) method for solving the linear system $\mA \vx = \vb$, where $\mA \in \reals^{n \times n}$ is a positive semidefinite matrix.

\subsection{Convergence rate of SC-RCD}

\paragraph{Update formulas.}
In the case when $\mA$ is also positive definite, the SC-RCD method is an instance of the subspace-constrained sketch-and-project method with geometry parameter $\mB = \mA$ and sparse sketching matrices aligned with the standard coordinate basis of the form $\mS = \ve_{\mathcal{J}}^{\tran}$, defined as in~\eqref{eq:block_rcd_update}, for randomly chosen subsets $\mathcal{J} \subseteq [n]$. Furthermore, the subspace constraint is defined by the matrix $\mQ = \ve_{\mathcal{S}}^{\tran} \in \reals^{d \times n}$, parameterized by a (small) subset $\mathcal{S} \subseteq [n]$ of $d \ll n$ coordinates representing $d$ salient data points or landmarks, which we propose to efficiently choose using an algorithm such as RPCholesky.

Recall that $\vx^0 \in \reals^n$ is any initial iterate satisfying $\mA_{\mathcal{S},:} \vx^0 = \vb_{\mathcal{S}}$.
By Lemma~\ref{lem:update}, the update formula of the SC-RCD method is given by
\begin{equation}\label{eq:rcd-update-1}
    \vx^{k+1} = \vx^k - \mA^{-1/2} \mP \mA^{1/2} \ve_J (\ve_J^{\tran} \mA^{1/2} \mP \mA^{1/2} \ve_J)^{\dagger} (\mA_{J,:} \vx^k - \vb_{J}),
\end{equation}
where $\mP = \mI - \mA^{1/2} \ve_{\mathcal{S}} (\ve_{\mathcal{S}}^{\tran} \mA \ve_{\mathcal{S}})^{\dagger} \ve_{\mathcal{S}}^{\tran} \mA^{1/2}$ is the orthogonal projector onto $\nullspace(\ve_{\mathcal{S}}^{\tran} \mA^{1/2})$.

Note that the rank-$d$ Nystr\"{o}m approximation of $\mA$ with respect to the coordinates indexed by $\mathcal{S}$ (e.g., see~\cite[\S 19.2]{MartinssonTropp2020}, \cite[\S 2.1]{ChenEtAl2024}) can be written as
\begin{equation}
    \mA \langle \mathcal{S} \rangle = \mA_{:,\mathcal{S}} (\mA_{\mathcal{S},\mathcal{S}})^{\dagger} \mA_{\mathcal{S},:}.
\end{equation}
Using this expression, we can make the key observation that the residual matrix satisfies
\begin{equation} \label{eq:nystrom_proj}
    \mA^{\circ} = \mA - \mA \langle \mathcal{S} \rangle = \mA^{1/2} \mP \mA^{1/2}.
\end{equation}
Thus, the update formula~\eqref{eq:rcd-update-1} can be written as
\begin{align} \label{eq:sc-rcd_update}
    \vx^{k+1}
    &= \vx^k - (\ve_{\mathcal{J}} - \ve_{\mathcal{S}} (\mA_{\mathcal{S},\mathcal{S}})^{\dagger} \mA_{\mathcal{S},:} \ve_{\mathcal{J}}) (\mA^{\circ}_{\mathcal{J},\mathcal{J}})^{\dagger} (\mA_{\mathcal{J},:} \vx^k - \vb_{\mathcal{J}}) \nonumber\\
    &= \vx^k - \ve_{\mathcal{J}} \valpha^k + \ve_{\mathcal{S}} \vbeta^k,
\end{align}
where, with the corresponding residual vector denoted by $\vr^k = \mA \vx^k - \vb$,
\[
    \valpha^k := (\mA^{\circ}_{\mathcal{J},\mathcal{J}})^{\dagger} \vr^k_{\mathcal{J}},
    \quad
    \vbeta^k := \mC_{:,\mathcal{J}} \valpha^k,
    \quad\text{and}\quad
    \mC := (\mA_{\mathcal{S},\mathcal{S}})^{\dagger} \mA_{\mathcal{S},:}.
\]
Note that the formula~\eqref{eq:sc-rcd_update} remains well-defined even if $\mA$ is rank deficient.
Hence, the SC-RCD method can be defined directly through the update formula~\eqref{eq:sc-rcd_update}, and remains well-defined in the general positive semidefinite case.

With the update~\eqref{eq:sc-rcd_update} for the iterate $\vx^k$, the following update for the residual vector $\vr^{k+1} = \mA \vx^{k+1} - \vb$ can be derived:
\begin{align}
    \vr^{k+1} 
    &= (\mA \vx^k - \vb) - \mA^{\circ}_{:,\mathcal{J}} (\mA^{\circ}_{\mathcal{J},\mathcal{J}})^{\dagger} (\mA_{\mathcal{J},:} \vx^k - \vb_{\mathcal{J}}) \nonumber\\
    &= \vr^{k} - \mA^{\circ}_{:,\mathcal{J}} (\mA^{\circ}_{\mathcal{J},\mathcal{J}})^{\dagger} \vr^k_{\mathcal{J}} \nonumber\\
    &= \vr^{k} - \mA^{\circ}_{:,\mathcal{J}} \valpha^k. \label{eq:sc-rcd_update_resid}
\end{align}
Finally, using~\eqref{eq:nystrom_proj}, the orthogonal projector $\mZ$ onto $\range(\mP \mA^{1/2} \ve_{\mathcal{J}})$ can be written
\begin{align}
    \mZ
    &= \mP \mA^{1/2} \ve_{\mathcal{J}} (\ve_{\mathcal{J}}^{\tran} \mA^{1/2} \mP \mA^{1/2} \ve_{\mathcal{J}})^{\dagger} \ve_{\mathcal{J}}^{\tran} \mA^{1/2} \mP \nonumber\\
    &= \mP \mA^{1/2} \ve_{\mathcal{J}} (\mA^{\circ}_{\mathcal{J},\mathcal{J}})^{\dagger} \ve_{\mathcal{J}}^{\tran} \mA^{1/2} \mP. \label{eq:sc-rcd_Z}
\end{align}

\begin{remark}[Properties of the Nystr\"{o}m approximation] \label{rmk:nystrom_properties} 
The residual matrix $\mA^{\circ} = \mA - \mA \langle \mathcal{S} \rangle$ is the (generalized) \emph{Schur complement} of $\mA$ with respect to the coordinates $\mathcal{S} \subseteq [n]$ (e.g., see~\cite{HornZhang2005schur}). From this observation, the following well-known properties of the Nystr\"{o}m approximation $\mA \langle \mathcal{S} \rangle$ can be derived.
(i) $\mA^{\circ} \succeq \mzero$, so $\mA \succeq \mA \langle \mathcal{S} \rangle \succeq \mzero$.
(ii) The columns indexed by $\mathcal{S}$ in $\mA \langle \mathcal{S} \rangle$ and $\mA$ are equal: $(\mA \langle \mathcal{S} \rangle)_{:,\mathcal{S}} = \mA_{:,\mathcal{S}}$.
(iii) The range of $\mA \langle \mathcal{S} \rangle$ coincides with the span of the columns $\mathcal{S}$ in $\mA$: $\range(\mA \langle \mathcal{S} \rangle) = \range(\mA_{:,\mathcal{S}})$.
(iv) If the $d \times d$ block $(\mA \langle \mathcal{S} \rangle)_{\mathcal{S},\mathcal{S}}$ is invertible, then $\mathrm{rank}(\mA^{\circ}) = \mathrm{rank}(\mA) - d$ (\cite[Theorem~1.6]{HornZhang2005schur}), and the eigenvalues of $\mA^{\circ}$ interlace those of $\mA$: $\lambda_i(\mA) \geq \lambda_i(\mA^{\circ}) \geq \lambda_{i+d}(\mA)$ for $1 \leq i \leq n - d$ (\cite[Theorem~2.1]{Liu2005schur}).\footnote{If $(\mA \langle \mathcal{S} \rangle)_{\mathcal{S},\mathcal{S}}$ is not invertible, these properties still hold and can be proved using the same arguments using the generalized Aitken block-diagonalization formula (\cite[Eq.~(6.0.20)]{PuntanenStyan2005schur}) instead.}
(v) Finally, if $\mA \langle \mathcal{S} \rangle$ is computed using RPCholesky, then $(\mA \langle \mathcal{S} \rangle)_{\mathcal{S},\mathcal{S}}$ is invertible by virtue of the adaptive diagonal sampling process for the pivots.
\end{remark}

\paragraph{Convergence rate.}
In the positive definite case, if the block $\mathcal{J} \subseteq [n]$ is formed by the simple scheme of sampling $\ell$ coordinates independently from the same distribution, the convergence rate of the SC-RCD method can be deduced from the general theory for the subspace-constrained sketch-and-project method (Theorem~\ref{thm:convergence_rate} and Proposition~\ref{prop:convergence_rate_block}).
The following result obtains the resulting rate when the coordinates are sampled proportionally to the diagonal of the residual matrix $\mA^{\circ}$:

\begin{theorem}[Diagonal sampling] \label{thm:sc-rcd_convrate}
Let $\mA \in \reals^{n \times n}$ be a positive semidefinite matrix and $\vx^*$ be any solution of $\mA \vx = \vb$.
Suppose that $\{ \vx^k \}_{k \geq 0}$ are the iterates defined by~\eqref{eq:sc-rcd_update} with a fixed subset $\mathcal{S} \subseteq [n]$, and the block $\mathcal{J} = \{ j_1, \ldots, j_\ell \}$ in each iteration consists of $\ell$ coordinates independently sampled according to the distribution $\mathrm{diag}(\mA^{\circ}) / \tr(\mA^{\circ})$. Then
\[
    \E \norm{\vx^k - \vx^*}_{\mA}^2 \leq \left( 1 - \frac{\lambda_{\mathrm{min}}^+(\mA^{\circ})}{\tr(\mA^{\circ})} \right)^{k \ell} \cdot \norm{\vx^0 - \vx^*}_{\mA}^2.
\]
\end{theorem}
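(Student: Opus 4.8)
The plan is to apply the block convergence bound of Proposition~\ref{prop:convergence_rate_block} to the SC-RCD method, viewing it as the subspace-constrained sketch-and-project instance from Section~\ref{sec:sc-sap} with $\mB = \mA$, sketches $\mS = \ve_{\mathcal{J}}^{\tran}$, and constraint $\mQ = \ve_{\mathcal{S}}^{\tran}$. The first step is to compute the single-coordinate expected projector $\E[\mZ_1]$ under the diagonal sampling rule. Using the form~\eqref{eq:sc-rcd_Z} of $\mZ$ with $\mathcal{J} = \{ j \}$, sampling coordinate $j$ gives $\mZ_1 = \mP \mA^{1/2} \ve_j \ve_j^{\tran} \mA^{1/2} \mP / \mA^{\circ}_{j,j}$ when $\mA^{\circ}_{j,j} > 0$, and $\mZ_1 = \mzero$ otherwise (since $\mA^{\circ}_{j,j} = \norm{\mP \mA^{1/2} \ve_j}_2^2$). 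Since coordinate $j$ is drawn with probability $\mA^{\circ}_{j,j} / \tr(\mA^{\circ})$, the factors of $\mA^{\circ}_{j,j}$ cancel, and summing over $j$ with $\sum_j \ve_j \ve_j^{\tran} = \mI$ and $\mA^{1/2} \mA^{1/2} = \mA$ yields the clean expression
\[
    \E[\mZ_1] = \frac{1}{\tr(\mA^{\circ})} \, \mP \mA \mP.
\]

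The second step is to identify $\lambda^+_{\mathrm{min}}(\E[\mZ_1])$ with the rate in the statement. The key algebraic fact is~\eqref{eq:nystrom_proj}, namely $\mA^{\circ} = \mA^{1/2} \mP \mA^{1/2}$: writing $\mX := \mA^{1/2} \mP$ and using $\mP^2 = \mP$, we get $\mX^{\tran} \mX = \mP \mA \mP$ and $\mX \mX^{\tran} = \mA^{1/2} \mP \mA^{1/2} = \mA^{\circ}$. Because $\mX^{\tran} \mX$ and $\mX \mX^{\tran}$ share the same nonzero eigenvalues, $\lambda^+_{\mathrm{min}}(\mP \mA \mP) = \lambda^+_{\mathrm{min}}(\mA^{\circ})$, and hence $\lambda^+_{\mathrm{min}}(\E[\mZ_1]) = \lambda^+_{\mathrm{min}}(\mA^{\circ}) / \tr(\mA^{\circ})$. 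The exactness condition~\eqref{eq:exactness} is then immediate, since $\nullspace(\E[\mZ_1]) = \nullspace(\mP \mA \mP) = \nullspace(\mA^{1/2} \mP) = \nullspace(\mA \mB^{-1/2} \mP)$. With these two facts, Proposition~\ref{prop:convergence_rate_block} applied to the i.i.d.\ rows of $\mS = \ve_{\mathcal{J}}^{\tran}$ produces the factor $\ell$ in the exponent and delivers exactly the claimed bound.

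The step I expect to be the main obstacle is that the theorem is stated for merely positive semidefinite $\mA$, whereas the framework of Section~\ref{sec:sc-sap} presupposes a positive definite geometry parameter $\mB = \mA$ (it uses $\mB^{-1/2}$ explicitly). To bridge this gap I would observe that, in the coordinate-descent specialization, every factor of $\mB^{-1/2} = \mA^{-1/2}$ merges with an adjacent factor of $\mA$ to form $\mA^{1/2}$: the projectors $\mP = \mI - \mA^{1/2} \ve_{\mathcal{S}} (\ve_{\mathcal{S}}^{\tran} \mA \ve_{\mathcal{S}})^{\dagger} \ve_{\mathcal{S}}^{\tran} \mA^{1/2}$ and $\mZ$ from~\eqref{eq:sc-rcd_Z}, together with the matrices $\ve_{\mathcal{J}}^{\tran} \mA^{1/2} \mP$ appearing in the proof of Proposition~\ref{prop:convergence_rate_block}, are all well-defined without any inverse. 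It then remains to re-derive directly from~\eqref{eq:sc-rcd_update} the two structural facts driving the argument.

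For the psd re-derivation, I would first establish the invariant subspace property $\mA^{1/2}(\vx^k - \vx^*) \in \range(\mP \mA^{1/2})$ by induction: the base case uses $\mA_{\mathcal{S},:} \vx^0 = \vb_{\mathcal{S}}$ to place $\mA^{1/2}(\vx^0 - \vx^*)$ in $\range(\mP) = \nullspace(\ve_{\mathcal{S}}^{\tran} \mA^{1/2})$, while the inductive step uses that each update subtracts a vector lying in $\range(\mP \mA^{1/2} \ve_{\mathcal{J}}) \subseteq \range(\mP \mA^{1/2})$. Then I would verify the fixed-point identity $\mA^{1/2}(\vx^{k+1} - \vx^*) = (\mI - \mZ) \mA^{1/2}(\vx^k - \vx^*)$ by left-multiplying~\eqref{eq:sc-rcd_update} by $\mA^{1/2}$, using the identity $\mA^{1/2}(\ve_{\mathcal{J}} - \ve_{\mathcal{S}} (\mA_{\mathcal{S},\mathcal{S}})^{\dagger} \mA_{\mathcal{S},:} \ve_{\mathcal{J}}) = \mP \mA^{1/2} \ve_{\mathcal{J}}$ (which follows from $\mI - \mP = \mA^{1/2} \ve_{\mathcal{S}} (\ve_{\mathcal{S}}^{\tran} \mA \ve_{\mathcal{S}})^{\dagger} \ve_{\mathcal{S}}^{\tran} \mA^{1/2}$), writing $\vr^k = \mA^{1/2} \cdot \mA^{1/2}(\vx^k - \vx^*)$, and invoking the invariant subspace property to replace $\mA^{1/2}(\vx^k - \vx^*)$ by $\mP \mA^{1/2}(\vx^k - \vx^*)$. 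Once these hold, the Pythagorean error decrease $\norm{\mA^{1/2}(\vx^{k+1} - \vx^*)}_2^2 = \norm{\mA^{1/2}(\vx^k - \vx^*)}_2^2 - \norm{\mZ \mA^{1/2}(\vx^k - \vx^*)}_2^2$ follows, and the inductive spectral argument in the proof of Proposition~\ref{prop:convergence_rate_block} applies verbatim, completing the proof in the general psd case.
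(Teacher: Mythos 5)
Your proposal is correct and follows essentially the same route as the paper's proof: compute $\E[\mZ_1] = \mP \mA \mP / \tr(\mA^{\circ})$ under diagonal sampling, identify $\lambda_{\mathrm{min}}^+(\mP \mA \mP) = \lambda_{\mathrm{min}}^+(\mA^{\circ})$ via $\mA^{\circ} = \mA^{1/2} \mP \mA^{1/2}$, verify the exactness condition, and invoke Proposition~\ref{prop:convergence_rate_block}. Your only departure is one of thoroughness: where the paper handles the merely positive semidefinite case by asserting that the fixed-point identity $\mA^{1/2}(\vx^{k+1} - \vx^*) = (\mI - \mZ)\mA^{1/2}(\vx^k - \vx^*)$ ``can be shown by direct calculation,'' you actually carry out that calculation (invariant subspace by induction, plus the identity $\mA^{1/2}(\ve_{\mathcal{J}} - \ve_{\mathcal{S}} (\mA_{\mathcal{S},\mathcal{S}})^{\dagger} \mA_{\mathcal{S},:} \ve_{\mathcal{J}}) = \mP \mA^{1/2} \ve_{\mathcal{J}}$), which is correct and fills in a detail the paper omits.
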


\begin{proof}
First, suppose that $\mA$ is positive definite. Since the blocks consist of i.i.d.\ samples, the SC-RCD method fits into the framework of Proposition~\ref{prop:convergence_rate_block}. Hence, it suffices to analyze $\E[\mZ_1]$, where $\mZ_1$ is the orthogonal projector onto $\range(\mP \mA^{1/2} \ve_j)$ from~\eqref{eq:sc-rcd_Z} with $j \in [n]$ and block size $\ell = 1$.

The key idea is that the expectation $\E[\mZ_1]$ admits a nice formula in terms of the residual matrix when the coordinate $j \in [n]$ is sampled with probability $\mA^{\circ}_{j,j} / \tr(\mA^{\circ})$ in each iteration:
\begin{align}
    \E[\mZ_1]
    &= \sum_{j: \mA^{\circ}_{j,j} > 0} \frac{\mA^{\circ}_{j,j}}{\tr(A^{\circ})} \cdot \frac{1}{\mA^{\circ}_{j,j}} \mP \mA^{1/2} \ve_j \ve_j^{\tran} \mA^{1/2} \mP \nonumber \\
    &= \frac{1}{\tr(\mA^{\circ})} \mP \mA^{1/2} \left( \mI - \sum_{j: \mA^{\circ}_{j,j} = 0} \ve_j \ve_j^{\tran} \right) \mA^{1/2} \mP
    = \frac{1}{\tr(\mA^{\circ})} \mP \mA \mP. \label{eq:sc-rcd_sizeone_convrate:1}
\end{align}
Here we use the fact that $\sum_{j=1}^n \ve_j \ve_j^{\tran} = \mI$, and $\mA^{\circ}_{j,j} = 0$ implies $\mP \mA^{1/2} \ve_j \ve_j^{\tran} \mA^{1/2} \mP = \mzero$, since this is a rank one, psd matrix with zero trace.
Note that the exactness condition is trivially satisfied because
\[
    \nullspace(\E[\mZ_1]) = \nullspace((\mA^{1/2} \mP)^{\tran} \mA^{1/2} \mP) = \nullspace(\mA^{1/2} \mP).
\]
Thus, by Proposition~\ref{prop:convergence_rate_block}, SC-RCD with block size $\ell$ results in the expected error
\[
    \E\norm{\vx^k - \vx^*}_{\mA}^2 \leq \left( 1 - \frac{\lambda_{\mathrm{min}}^+(\mP \mA \mP)}{\tr(\mA^{\circ})} \right)^{k \ell} \cdot \norm{\vx^0 - \vx^*}_{\mA}^2.
\]
To conclude, we observe that the eigenvalues of $\mP \mA \mP$ and $\mA^{1/2} \mP \mA^{1/2} = \mA^{\circ}$ 
coincide.

Finally, we claim that the same result holds if $\mA$ is positive semidefinite but not invertible. In this case, the solution $\vx^*$ is not unique since it can be shifted by any vector in $\nullspace(\mA)$. However, we only need to track $\norm{\vx^k - \vx^*}_{\mA}^2$, which is constant for any choice of solution $\vx^*$.\footnote{For example, we can write $\norm{\vx^k - \vx^*}_{\mA}^2 = f(\vx^k) - f(\vx^*)$, where $f: \vx \mapsto \vx^{\tran} \mA \vx - 2 \vb^{\tran} \vx$ (\cite[Eq.~(8)]{LeventhalLewis2010}). Since $\nabla f(\vx) = 2(\mA \vx - \vb)$, any solution $\vx^*$ of $\mA \vx = \vb$ with psd $\mA$ is a minimizer of the convex quadratic $f$.}
Although we cannot directly use the results from the subspace-constrained sketch-and-project framework, it can be shown by direct calculation that if we \emph{define} the orthogonal projector $\mZ$ by~\eqref{eq:sc-rcd_Z}, then the fixed point equation $\mA^{1/2}(\vx^{k+1} - \vx^*) = (\mI - \mZ) \mA^{1/2}(\vx^k - \vx^*)$ still holds. Hence, the same arguments in Theorem~\ref{thm:convergence_rate} and Proposition~\ref{prop:convergence_rate_block} can be used to reach the same conclusions.
\end{proof}

If the blocks are sampled uniformly at random instead, then we can obtain a similar result, except that the rate depends on the spectrum of the diagonal-normalized residual matrix. The proof is similar and will be omitted.

\begin{proposition}[Uniform sampling] \label{prop:sc-rcd_convrate_unif}
Consider the same setup as in Theorem~\ref{thm:sc-rcd_convrate}. If instead, the block $\mathcal{J}$ consists of $\ell$ coordinates independently sampled from $\{ j \in [n]: \mA^{\circ}_{j,j} > 0 \}$ uniformly at random in each iteration, then with $\mD \in \reals^{n \times n}$ the diagonal matrix with entries $\mD_{j,j} = \mA^{\circ}_{j,j}$,
\[
    \E \norm{\vx^k - \vx^*}_{\mA}^2 \leq \left( 1 - \frac{\lambda_{\mathrm{min}}^+(\mD^{\dagger/2} \mA^{\circ} \mD^{\dagger/2})}{n - d} \right)^{k \ell} \cdot \norm{\vx^0 - \vx^*}_{\mA}^2.
\]
\end{proposition}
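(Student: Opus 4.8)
The plan is to follow the proof of Theorem~\ref{thm:sc-rcd_convrate} almost verbatim, changing only the sampling distribution and the resulting computation of the single-row expected projector $\E[\mZ_1]$. Assuming first that $\mA$ is positive definite, SC-RCD is an instance of the subspace-constrained sketch-and-project framework with i.i.d.\ blocks, so Proposition~\ref{prop:convergence_rate_block} reduces the task to computing $\lambda_{\mathrm{min}}^+(\E[\mZ_1])$, where $\mZ_1$ is the orthogonal projector onto $\range(\mP \mA^{1/2} \ve_j)$ from~\eqref{eq:sc-rcd_Z} with block size $\ell = 1$. Before computing, I would note that coordinates $j$ with $\mA^{\circ}_{j,j} = 0$ are inert: since $\mA^{\circ} \succeq \mzero$ and $\mA^{\circ}_{j,j} = \norm{\mP \mA^{1/2} \ve_j}_2^2$, such a $j$ forces $\mP \mA^{1/2} \ve_j = \vzero$ and hence $\mZ_1 = \mzero$. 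This justifies sampling only on the support $\{ j : \mA^{\circ}_{j,j} > 0 \}$, which by property~(ii) of Remark~\ref{rmk:nystrom_properties} lies in $[n] \setminus \mathcal{S}$ and therefore has some cardinality $N \leq n - d$.

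Sampling each support coordinate with probability $1/N$, a computation parallel to~\eqref{eq:sc-rcd_sizeone_convrate:1}, together with the identity $\sum_{j:\, \mA^{\circ}_{j,j} > 0} (\mA^{\circ}_{j,j})^{-1} \ve_j \ve_j^{\tran} = \mD^{\dagger}$, gives
\[
    \E[\mZ_1] = \frac{1}{N}\, \mP \mA^{1/2} \mD^{\dagger} \mA^{1/2} \mP = \frac{1}{N}\, \mM^{\tran} \mM, \qquad \mM := \mD^{\dagger/2} \mA^{1/2} \mP.
\]
The key step is then to match $\lambda_{\mathrm{min}}^+(\E[\mZ_1])$ with the quantity in the statement. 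Using $\mA^{\circ} = \mA^{1/2} \mP \mA^{1/2}$ from~\eqref{eq:nystrom_proj}, I would observe that $\mD^{\dagger/2} \mA^{\circ} \mD^{\dagger/2} = \mM \mM^{\tran}$, so that $N\, \E[\mZ_1] = \mM^{\tran} \mM$ and $\mD^{\dagger/2} \mA^{\circ} \mD^{\dagger/2}$ share the same nonzero eigenvalues, whence $\lambda_{\mathrm{min}}^+(\E[\mZ_1]) = N^{-1} \lambda_{\mathrm{min}}^+(\mD^{\dagger/2} \mA^{\circ} \mD^{\dagger/2})$. Verifying the exactness condition $\nullspace(\E[\mZ_1]) = \nullspace(\mA^{1/2} \mP)$ needs a short extra argument relative to Theorem~\ref{thm:sc-rcd_convrate}: the inclusion $\nullspace(\mA^{1/2} \mP) \subseteq \nullspace(\mM)$ is immediate, and for the reverse one uses that every vector of $\range(\mA^{1/2}\mP) = \range(\mA^{\circ})$ is supported off the zero-diagonal coordinates of $\mA^{\circ}$, so $\mD^{\dagger/2}$ cannot annihilate a nonzero element of $\range(\mA^{1/2}\mP)$. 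Finally, since $N \leq n - d$ yields $\lambda_{\mathrm{min}}^+(\E[\mZ_1]) \geq (n-d)^{-1} \lambda_{\mathrm{min}}^+(\mD^{\dagger/2} \mA^{\circ} \mD^{\dagger/2}) \geq 0$, applying Proposition~\ref{prop:convergence_rate_block} and the fact that $(1-t)^{\ell k}$ is decreasing in $t$ on $[0,1]$ produces the stated bound; the rank-deficient case is handled exactly as in the last paragraph of the proof of Theorem~\ref{thm:sc-rcd_convrate}, by \emph{defining} $\mZ$ via~\eqref{eq:sc-rcd_Z} and checking the fixed-point equation directly.

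The main obstacle I anticipate is the bookkeeping around the support $\{ j : \mA^{\circ}_{j,j} > 0 \}$: obtaining the correct normalization $N$ inside $\E[\mZ_1]$, replacing it by the looser but cleaner $n - d$ in the \emph{correct} direction (so that the bound is only weakened), and confirming exactness, which here is not automatic because $\mD^{\dagger/2}$ is singular precisely when some diagonal entries of $\mA^{\circ}$ outside $\mathcal{S}$ vanish. Everything else, including the passage from the single-row projector to block size $\ell$, transfers mechanically from the diagonal-sampling case.
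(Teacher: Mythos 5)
Your proposal is correct and is exactly the argument the paper intends by its remark that ``the proof is similar'' to Theorem~\ref{thm:sc-rcd_convrate}: apply Proposition~\ref{prop:convergence_rate_block} after computing $\E[\mZ_1] = \frac{1}{N}\mP\mA^{1/2}\mD^{\dagger}\mA^{1/2}\mP$, match nonzero spectra via $\mM^{\tran}\mM$ versus $\mM\mM^{\tran} = \mD^{\dagger/2}\mA^{\circ}\mD^{\dagger/2}$, weaken $N \leq n-d$ in the right direction, and reuse the semidefinite-case argument verbatim. You also correctly supply the one detail that does not transfer mechanically, namely exactness despite $\mD^{\dagger/2}$ being singular, using the fact that a psd matrix with a zero diagonal entry has the corresponding row and column zero, so $\range(\mA^{\circ}) = \range(\mA^{1/2}\mP)$ is supported on $\{j : \mA^{\circ}_{j,j} > 0\}$.
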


Note that when $\ell = 1$, the convergence rate in Theorem~\ref{thm:sc-rcd_convrate} is analogous to the bound~\eqref{eq:rcd_conv_bdd} proved for randomized coordinate descent, which depends on $\lambda^+_{\mathrm{min}}(\mA) / \tr(\mA)$ and is tight in general.
By using the fact that the eigenvalues of the residual matrix $\mA^{\circ}$ interlace those of $\mA$ (Remark~\ref{rmk:nystrom_properties}), or directly applying the variational principle, the numerator of the rate is guaranteed to be no smaller: $\lambda^+_{\mathrm{min}}(\mA^{\circ}) \geq \lambda^+_{\mathrm{min}}(\mA)$.
However, a significant improvement can be realized if the denominator is much smaller; i.e., $\tr(\mA^{\circ}) \ll \tr(\mA)$. This depends on the quality of the low-rank approximation in trace-norm and relates to how the pivots $\mathcal{S}$ are selected.
When $\mathcal{S}$ is selected using RPCholesky, we are able to use its approximation guarantees to prove Theorem~\ref{thm:sc-rcd_conv_main}.

\begin{proof}[Proof of Theorem~\ref{thm:sc-rcd_conv_main}]
Let $\mA \langle \mathcal{S} \rangle$ be the Nystr\"{o}m approximation of $\mA$ output by RPCholesky with randomly sampled pivot set $\mathcal{S}$. For a given $\mathcal{S}$, Theorem~\ref{thm:sc-rcd_convrate} implies that the error of the SC-RCD method satisfies
\[
    \E\left[ \norm{\vx^k - \vx^*}_{\mA}^2 \mid \mathcal{S} \right]
    \leq \left( 1 - \frac{\lambda_{\mathrm{min}}^+(\mA - \mA \langle \mathcal{S} \rangle)}{\tr(\mA - \mA \langle \mathcal{S} \rangle)} \right)^{k \ell} \cdot \norm{\vx^0 - \vx^*}_{\mA}^2.
\]
Note that $\lambda_{\mathrm{min}}^+(\mA - \mA \langle S \rangle) \geq \lambda_{\mathrm{min}}^+(\mA)$. By using Theorem~\ref{thm:rpcholesky_error} and Markov's inequality, the event $\mathcal{E}$ where $\tr(\mA - \mA \langle \mathcal{S} \rangle) \leq \rho^{-1} (1 + \delta) \sum_{i > r} \lambda_i(A)$ occurs with probability at least $1 - \rho$. Conditional on this event, substituting this bound into the displayed equation above completes the proof.
\end{proof}

\subsection{Implementation and complexity of SC-RCD} \label{sec:sc-rcd_implementation}

A practical implementation of the updates~\eqref{eq:sc-rcd_update} and~\eqref{eq:sc-rcd_update_resid} of the SC-RCD method is summarized in the pseudocode in Algorithm~\ref{alg:sc-rcd}, presented in Section~\ref{sec:main_results}.
Its efficiency, as well as its complexity estimates used in Theorem~\ref{thm:sc-rcd_flat}, relies on several computational considerations discussed below. 

\begin{enumerate}[label=(\roman*), ref=\roman*, leftmargin=*]
    \item \label{rmk:sc-rcd_implementation_initial}
    The partial pivoted Cholesky factor $\mF \in \reals^{n \times d}$ output by RPCholesky can be used to efficiently compute an initial iterate $\vx^0$ solving $\mA_{\mathcal{S},:} \vx^0 = \vb_{\mathcal{S}}$ (line~\ref{alg:sc-rcd_initial_solve}).
    From Remark~\ref{rmk:nystrom_properties}, observe that $\mA_{\mathcal{S},\mathcal{S}} = (\mA \langle \mathcal{S} \rangle)_{\mathcal{S},\mathcal{S}} = \mF_{\mathcal{S},:} (\mF_{\mathcal{S},:})^{\tran}$, where $\mF_{\mathcal{S},:} \in \reals^{d \times d}$ is an invertible, lower triangular matrix, and so $(\mA_{\mathcal{S},\mathcal{S}})^{-1} = (\mF_{\mathcal{S},:})^{-\tran} (\mF_{\mathcal{S},:})^{-1}$.
    Hence, given any vector $\vx \in \reals^n$, we can compute
    \[
        \vx^0 = \vx - \ve_{\mathcal{S}} (\mA_{\mathcal{S},\mathcal{S}})^{-1} (\mA_{\mathcal{S},:} \vx - \vb_{\mathcal{S}})
    \]
    by only modifying the coordinates of $\vx$ in $\mathcal{S}$: first, we solve $\mF_{\mathcal{S},:} \vw = \mA_{\mathcal{S},:} \vx - \vb_{\mathcal{S}}$ for $\vw \in \reals^d$ using forward substitution, and then solve $(\mF_{\mathcal{S},:})^{\tran} \vbeta = \vw$ for $\vbeta \in \reals^d$ using back substitution. Finally, we set $\vx^0 \leftarrow \vx$ and $\vx^0_{\mathcal{S}} \leftarrow \vx^0_{\mathcal{S}} - \vbeta$.
    In total, this requires $O(d^2)$ flops, which represents a substantial improvement over the $O(d^2 n)$ flops from solving $\mA_{\mathcal{S},:} \vx = \vb_{\mathcal{S}}$ naively.

    Given any vector $\vx$ and $\vr = \mA \vx - \vb$ (e.g., $\vx = \vzero$ and $\vr = -\vb$), the residual vector $\vr^0$ associated with $\vx^0$ can also be computed by $\vr^0 \leftarrow \vr - \mA_{:,\mathcal{S}} \vbeta$, which costs $O(d n)$ flops, instead of $O(n^2)$ flops from computing $\vr^0 = \mA \vx^0 - \vb$ directly.

    \item \label{rmk:sc-rcd_implementation_compute_B}
    Similarly, the columns of the auxiliary matrix $\mC = (\mA_{\mathcal{S},\mathcal{S}})^{\dagger} \mA_{\mathcal{S},:} = (\mF_{\mathcal{S}, :})^{-\tran} \mF^{\tran} \in \reals^{d \times n}$ (line~\ref{alg:sc-rcd_compute_B}) can be computed by solving a sequence of upper triangular linear systems with back substitution (each requiring $O(d^2)$ flops). Specifically, for each $j \in [n] \setminus \mathcal{S}$, the $j$th column $\mC_{:,j}$ is the solution of $(\mF_{\mathcal{S},:})^{\tran} \mC_{:,j} = (\mF_{j,:})^{\tran}$. (The submatrix $\mC_{:,\mathcal{S}}$ is the identity, but it is not used.) In total, computing $\mC$ with this procedure requires $O(d^2 (n - d))$ flops.

    \item \label{rmk:sc-rcd_implementation_sampling_withoutrep}
    The block $\mathcal{J} \subseteq [n]$ of $\ell$ coordinates can also be sampled \emph{without replacement} (line~\ref{alg:sc-rcd_sampling_step}) with probabilities proportional to $\mathrm{diag}(\mA^{\circ})$, which will always result in a larger error decrease in each iteration (see the upcoming Remark~\ref{rmk:sampling_without_replacement}).
    Another even simpler alternative is to sample the block $\mathcal{J}$ of $\ell$ coordinates \emph{uniformly at random} (with or without replacement), which will be especially effective if $\mA^{\circ}$ has incoherence properties (e.g., see~\cite[Lemma 10]{DerezinskiEtAl2024fine}).

    \item \label{rmk:sc-rcd_implementation_inexact}
    Line~\ref{alg:sc-rcd_projection_step} computes $\valpha^{k-1} = (\mA^{\circ}_{\mathcal{J}, \mathcal{J}})^{\dagger} \vr^{k-1}_{\mathcal{J}}$ by finding the min-norm solution of the $\ell \times \ell$ linear system $\mA^{\circ}_{\mathcal{J}, \mathcal{J}} \valpha = \vr^{k-1}_{\mathcal{J}}$.\footnote{The linear system has a solution since $\vr^k_{\mathcal{J}} \in \range(\mA^{\circ}_{\mathcal{J}, \mathcal{J}})$. This follows from using the fact that $\mP \mA^{1/2}(\vx^k - \vx^*) = \mA^{1/2}(\vx^k - \vx^*)$ to write $\vr^k_{\mathcal{J}} = \ve_{\mathcal{J}}^{\tran} \mA^{1/2} \mA^{1/2} (\vx^k - \vx^*) = \ve_{\mathcal{J}}^{\tran} \mA^{1/2} \mP \mA^{1/2} (\vx^k - \vx^*) = \mA^{\circ}_{\mathcal{J},:} (\vx^k - \vx^*)$. Finally, we conclude by noting that $\range(\mA^{\circ}_{\mathcal{J},:}) \subseteq \range(\mA^{\circ}_{\mathcal{J}, \mathcal{J}})$ since $\mA^{\circ}$ is psd (\cite[Theorem~1.20]{HornZhang2005schur}).}
    For small block sizes $\ell$, $\valpha$ can be solved directly using a method based on QR or SVD using $O(\ell^3)$ flops.
    For larger block sizes, $\valpha$ can be computed inexactly using an iterative method such as CG, noting that $\mA^{\circ}_{\mathcal{J}, \mathcal{J}}$ is psd (see the upcoming Remark~\ref{rmk:inexact_proj}).
\end{enumerate}

\paragraph{Complexity estimates.}
Based on the considerations above, the computational costs of SC-RCD (Algorithm~\ref{alg:sc-rcd}) can be analyzed in two stages:
\begin{itemize}[leftmargin=2em]
    \item \textbf{Initialization:} learning the rank-$d$ Nystr\"{o}m approximation $\mA \langle \mathcal{S} \rangle = \mF \mF^{\tran}$ using RPCholesky requires $O(d^2 n)$ arithmetic operations, $O(dn)$ entry evaluations of $\mA$, and $O(dn)$ storage~\cite{ChenEtAl2024, EpperlyEtAl2024rejection}. 
    Note that the normalized diagonal of the residual matrix $\mA^{\circ}$ can be read off the output of RPCholesky to obtain the sampling probabilities $\vp$ without any additional cost.
    Next, the initial iterate $\vx$ and auxiliary matrix $\mC$ can be computed with $O(d^2)$ and $O(d^2 n)$ operations respectively, and $dn$ entries of $\mC$ have to be stored in memory.
    
    \item \textbf{Iterations:} in each iteration, the residual submatrix $\mA^{\circ}_{\mathcal{J}, \mathcal{J}} = \mA_{\mathcal{J}, \mathcal{J}} - \mF_{\mathcal{J},:} (\mF_{\mathcal{J},:})^{\tran}$ is needed for the projection step. If $\mA^{\circ}$ cannot be stored in memory, this requires accessing and storing $\ell n$ entries of $\mA$ in $\mA_{:, \mathcal{J}}$ (since the entire columns are also needed to update the residual vector $\vr$) and $O(\ell^2 d)$ arithmetic operations.
    Then, solving for $\valpha$ requires $O(\ell^3)$ operations (possibly fewer if solved inexactly), computing $\vbeta$ requires $O(\ell d)$ operations, and updating the iterate $\vx$ and residual vector $\vr$ requires $\ell + d$ and $O(\ell n) + O(\ell d + dn)$ operations respectively.
    In summary, each iteration requires $O(dn + \ell n + \ell^3 + \ell^2 d)$ operations. If $\ell = O(\sqrt{n})$, then this simplifies to $O((\ell + d) n)$ operations per iteration. 
\end{itemize}

\medskip

Combining Theorem~\ref{thm:sc-rcd_conv_main} with the analysis of the computational costs of the SC-RCD method allows us to prove Theorem~\ref{thm:sc-rcd_flat} on the overall complexity of SC-RCD.

\begin{proof}[Proof of Theorem~\ref{thm:sc-rcd_flat}]
If RPCholesky is independently run $T = \lceil \log_2(2/\epsilon) \rceil$ times and the output $\mathcal{S}$ with the smallest residual trace-norm is chosen as in Remark~\ref{rmk:boosting_prob}, then the event $\mathcal{E}$ where $\tr(\mA - \mA \langle \mathcal{S} \rangle) \leq 2(1 + 1) \sum_{i > r} \lambda_i(\mA)$ occurs with probability at least $1 - 2^{-T} \geq 1 - \epsilon / 2$.
Applying Theorem~\ref{thm:sc-rcd_conv_main} with $\delta = 1$ implies that conditional on the event $\mathcal{E}$, the expected relative error after $k = \lceil 4 (n / \ell) \bar{\kappa}_r(\mA) \log(2/\epsilon) \rceil$ iterations satisfies
\begin{align*}
    \E\left[ \norm{\vx^k - \vx^*}_{\mA}^2 \mid \mathcal{E} \right]
    &\leq \exp \left( -4n \bar{\kappa}_r(\mA) \log(2/\epsilon) \cdot \frac{\lambda^+_{\mathrm{min}}(\mA)}{2(1 + 1) \sum_{i > r} \lambda_i(\mA)} \right) \cdot \norm{\vx^0 - \vx^*}_{\mA}^2 \\
    &\leq (\epsilon / 2) \cdot \norm{\vx^0 - \vx^*}_{\mA}^2,
\end{align*}
where we used the elementary inequality $1 - t \leq e^{-t}$ for the first inequality. By using the monotonicity property $\norm{\vx^k - \vx^*}_{\mA}^2 \leq \norm{\vx^0 - \vx^*}_{\mA}^2$ in the event that $\mathcal{E}$ does not hold, denoted by $\mathcal{E}^c$, the overall expectation can be bounded by
\begin{align*}
    \E \norm{\vx^k - \vx^*}_{\mA}^2
    &= \E\left[ \norm{\vx^k - \vx^*}_{\mA}^2 \mid \mathcal{E} \right] \cdot \prob{\mathcal{E}}
    + \E\left[ \norm{\vx^k - \vx^*}_{\mA}^2 \mid \mathcal{E}^c \right] \cdot \prob{\mathcal{E}^c} \\
    &\leq (\epsilon / 2) \cdot \norm{\vx^0 - \vx^*}_{\mA}^2 + (\epsilon / 2) \cdot \norm{\vx^0 - \vx^*}_{\mA}^2
    = \epsilon \cdot \norm{\vx^0 - \vx^*}_{\mA}^2,
\end{align*}
as desired.
It remains to compute the computational costs. Running RPCholesky $T = O(\log(1/\epsilon))$ times and initializing requires $O(d^2 n \log(1/\epsilon))$ arithmetic operations, $O(dn \log(1/\epsilon))$ entry evaluations, and $O(dn)$ storage.
Subsequently, each SC-RCD iteration requires accessing $\ell n$ entries of $\mA$ and $O(dn + \ell n + \ell^3 + \ell^2 d)$ arithmetic operations, so in total $O(n^2 \cdot \bar{\kappa}_r(\mA) \log(1/\epsilon))$ entry evaluations and $O((n^2 (d + \ell) / \ell + \ell^2 n + \ell d n) \cdot \bar{\kappa}_r(\mA) \log(1/\epsilon))$ arithmetic operations are required.
\end{proof}

\begin{remark}[Inexact projections] \label{rmk:inexact_proj} 
Note that each update~\eqref{eq:lem_update_1} requires finding the min-norm solution $\vz$ of the linear system $\mS \mA \mB^{-1/2} \mP \vz = \mS(\mA \vx^k - \vb)$ to compute $(\mS \mA \mB^{-1/2} \mP \mB^{-1/2} \mA^{\tran} \mS^{\tran})^{\dagger} \mS(\mA \vx^k - \vb)$. For practical efficiency, it is possible for an approximate solution to be computed; e.g., using an inner iterative method such as preconditioned CG.
See \cite[\S4.3]{DerezinskiYang2024}, \cite[\S6]{DerezinskiEtAl2024fine}, or~\cite{tappenden2016inexact} for results along these lines, where similar theoretical bounds as in Theorem~\ref{thm:convergence_rate} can be derived with the loss of a small multiplicative factor in the rate.
\end{remark}

\begin{remark}[Sampling without replacement] \label{rmk:sampling_without_replacement}
If the coordinates in the blocks are sampled without replacement in Theorem~\ref{thm:sc-rcd_convrate} or Proposition~\ref{prop:sc-rcd_convrate_unif}, then the same bounds hold because the convergence rate can only improve.
To see this, suppose that $\mathcal{J}$ and $\mathcal{J}'$ consists of $\ell$ coordinates sampled with and without replacement respectively, and let $\mZ \equiv \mZ(\mathcal{J})$ and $\mZ' \equiv \mZ'(\mathcal{J}')$ denote the corresponding orthogonal projectors onto $\range(\mP \mA^{1/2} \ve_{\mathcal{J}})$ and $\range(\mP \mA^{1/2} \ve_{\mathcal{J}'})$ from~\eqref{eq:sc-rcd_Z}.
The key observation is that $\mathcal{J}$ and $\mathcal{J}'$ can be coupled such that $\mathcal{J} \subseteq \mathcal{J}'$ by rejection sampling (e.g., $\mathcal{J}'$ can be formed by proposing the same indices sampled for $\mathcal{J}$ and resampling any duplicates), and hence $\mZ' \succeq \mZ$. Combined with Lemma~\ref{lem:error_decrease}, this implies that the error decrease in each iteration with $\mZ'$ is always at least as large as with $\mZ$.
\end{remark}

\section{Numerical experiments} \label{sec:numerical_results}

In this section, we present some numerical experiments demonstrating various features of the SC-RCD method. The experiments were performed using Python 3.12.7 on a 2.6 GHz Intel Skylake CPU with 32GB RAM. The code is available at \url{https://github.com/jackielok/subspace-constrained-rcd}.

\paragraph{Synthetic psd system.}

\begin{figure}[!htb]
    \centering
    \includegraphics[width=0.495\linewidth, trim={0.2cm 0.4cm 0.2cm 0.3cm}, clip]{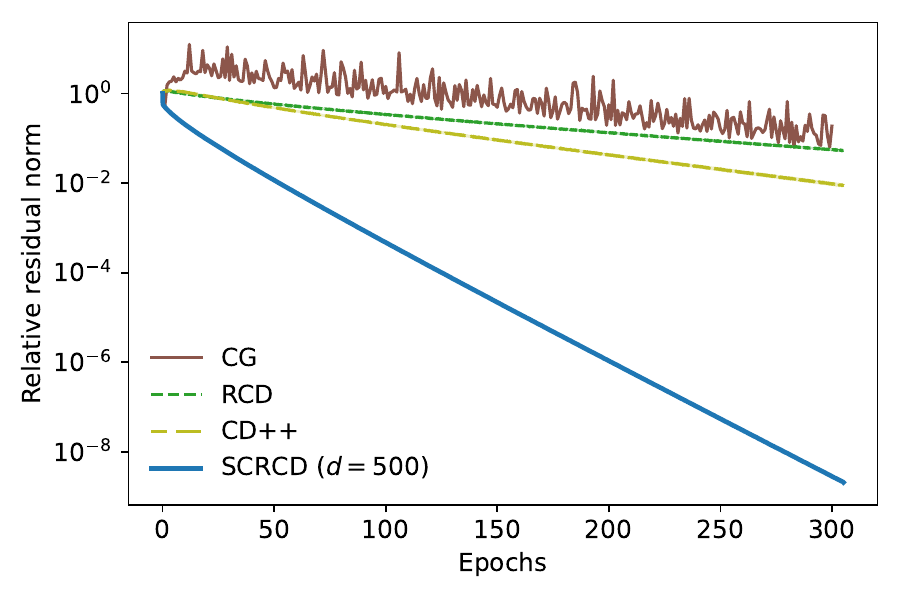}
    \includegraphics[width=0.495\linewidth, trim={0.2cm 0.4cm 0.2cm 0.3cm}, clip]{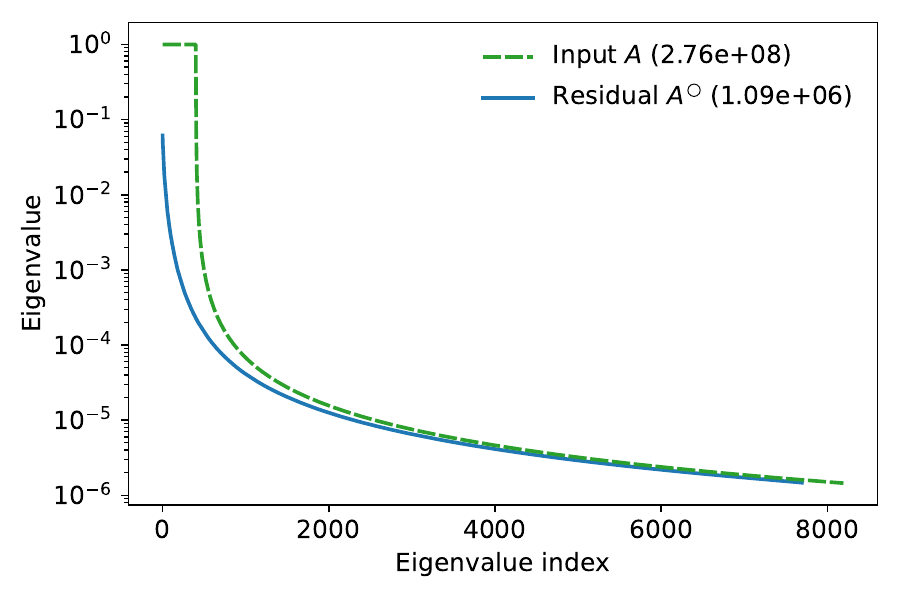}
    \vspace{-1.5\floatsep}
    \caption{
    Solving a synthetic $8,192 \times 8,192$ psd system $\mA \vx = \vb$ with approximate rank $r = 400$.
    \textbf{(Left)} Relative residual norm $\norm{\mA \vx^k - \vb}_2 / \norm{\mA \vx^0 - \vb}_2$ over 300 epochs for SC-RCD (with $d = 500$ and $\ell = 500$), as well as CG, RCD and CD++ (also with $\ell = 500$), using the same initial iterate as SC-RCD. Each epoch corresponds to a single pass over the entire dataset (i.e., one iteration of SC-RCD/RCD/CD++ corresponds to $\ell / n$ epochs).
    The lines depict the median over 100 independent runs with the same Nystr\"{o}m approximation.
    \textbf{(Right)} Eigenvalue spectra of $\mA$ and the residual matrix $\mA^{\circ}$ corresponding to the rank-$d$ approximation, with their condition numbers $\sum_i \lambda_i / \lambda_{\mathrm{min}}^+$ reported in brackets.
    }
    \label{fig:simlowrank}
    
    \vspace{0.5\floatsep}

    \includegraphics[width=0.495\linewidth, trim={0.2cm 0.4cm 0.2cm 0.3cm}, clip]{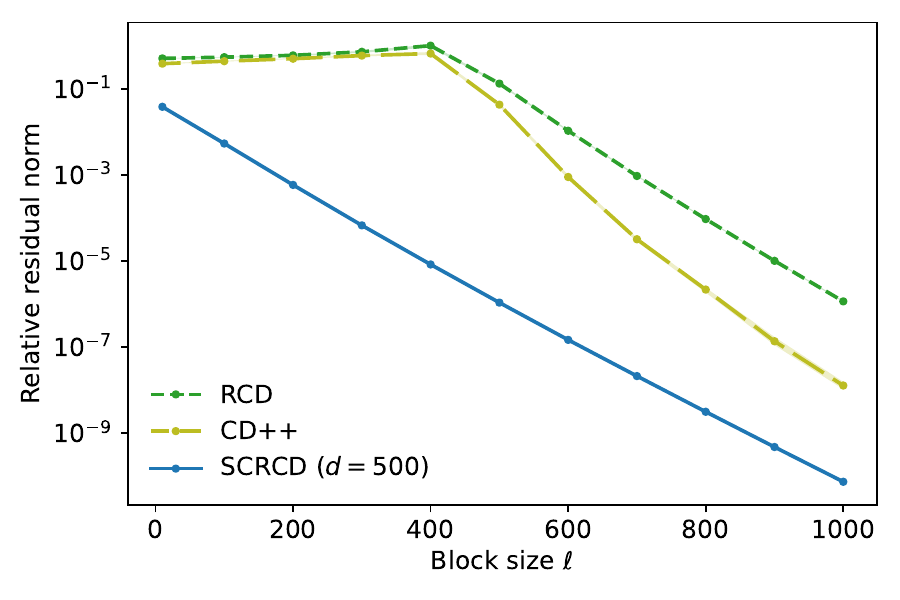}
    \includegraphics[width=0.495\linewidth, trim={0.2cm 0.4cm 0.2cm 0.3cm}, clip]{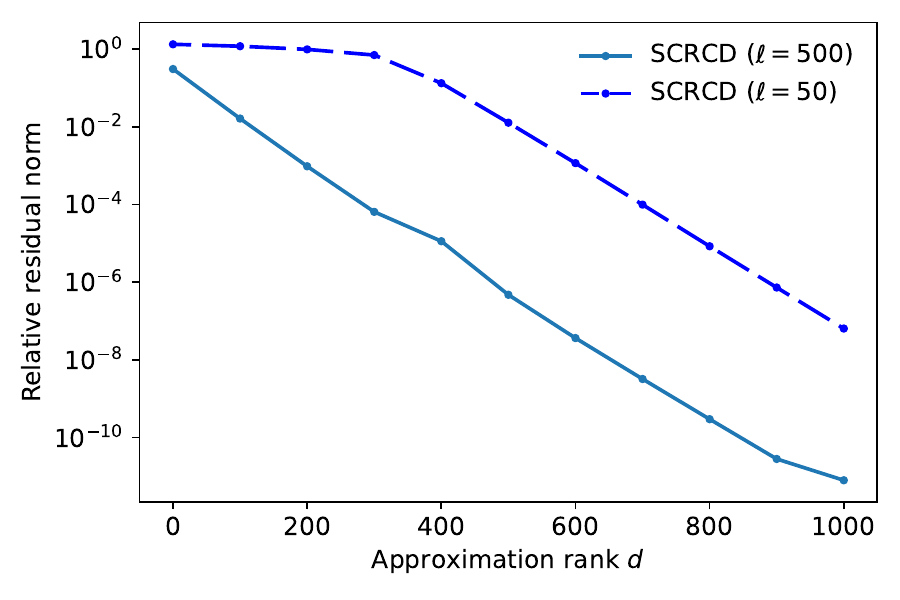}
    \vspace{-1.5\floatsep}
    \caption{
    Continuing the same setup as in Figure~\ref{fig:simlowrank}, the plots show the relative residual norm after 200 epochs for
    \textbf{(Left)} SC-RCD (with $d = 500$), RCD, and CD++ using various block sizes $\ell$; and
    \textbf{(Right)} SC-RCD (with $\ell \in \{ 50, 500 \}$) using various approximation ranks $d$.
    }
    \vspace{-0.5\floatsep}
    \label{fig:simlowrank_varydl}
\end{figure}

In the first experiment, we demonstrate the effectiveness of the SC-RCD method (Algorithm~\ref{alg:sc-rcd}) for solving approximately low-rank systems. We simulate a $n \times n$ psd linear system with $n = 8,192 = 2^{13}$, where the first $r = 400$ eigenvalues are equal to one (i.e., are ``large'' up to normalization) and subsequently decay as $\lambda_i = i^{-3/2}$ for $i > 400$, by defining a diagonal matrix $\mSigma$ with $\mSigma_{i,i} = \lambda_i$ and rotating with a uniformly random orthogonal matrix $\mU$ to form $\mA = \mU \mSigma \mU^{\tran}$. 

Figure~\ref{fig:simlowrank} (right) shows that the residual matrix $\mA^{\circ} = \mA - \mA \langle \mathcal{S} \rangle$ with approximation rank $d = 500 \approx 5.5 \sqrt{n}$ is much better conditioned than $\mA$.
Accordingly, Figure~\ref{fig:simlowrank} (left) shows that SC-RCD, using the corresponding rank-$d$ Nystr\"{o}m approximation and block size $\ell = 500$, converges effectively.
For comparison, we also show the convergence rate, measured on an epoch-basis, for related methods including the conjugate gradient method (CG); randomized coordinate descent (RCD) with blocks of size $\ell = 500$, sampled in the same way as SC-RCD; and the recently proposed CD++ method from Derezi{\'n}ski et al.~\cite{DerezinskiEtAl2025beyond}, which combines RCD with techniques such as adaptive acceleration and Hadamard preconditioning.

Figure~\ref{fig:simlowrank_varydl} shows the relative residual norm after 200 epochs for SC-RCD, RCD, and CD++ using various block sizes $\ell$ and approximation ranks $d$.
Figure~\ref{fig:simlowrank_varydl} (left) shows that the SC-RCD error decreases as $\ell$ increases. The theory that we develop (Theorem~\ref{thm:sc-rcd_convrate}) implies that this curve should be non-decreasing; however, the actual performance (significantly) exceeds this bound, which reflects how larger blocks are able to implicitly capture larger parts of the spectrum of $\mA$ (see~\cite{derezinski2024sharp, DerezinskiEtAl2025beyond} for related theory). It also shows that RCD and CD++ do not converge effectively until $\ell$ is large enough to implicitly capture the leading $r = 400$ eigenvalues, after which they significantly improve.
Figure~\ref{fig:simlowrank_varydl} (right) shows that the SC-RCD error also decreases as $d$ increases and RPCholesky computes a higher quality matrix approximation (Theorem~\ref{thm:rpcholesky_error}). We observe a significant improvement once $d$ is large enough, in combination with $\ell$ (due to the implicit effects of block size), to capture the $r$ large spectral outliers of $\mA$ in each iteration.

We note that while each iteration of SC-RCD incurs an additional computational cost of $O(nd)$ to enforce the subspace constraint compared to RCD, it is not the dominating term in the complexity of each iteration when $d, \ell \approx O(\sqrt{n})$ (see Section~\ref{sec:sc-rcd_implementation}). Correspondingly, the time to run SC-RCD and RCD in Figures~\ref{fig:simlowrank} and \ref{fig:simlowrank_varydl} with the same block size $\ell$ was found to be very close.
The iterations of CD++ with the same block size are somewhat faster due to its use of techniques such as approximate regularized projections and block memoization, which could be integrated with SC-RCD for practical efficiency as a part of future work.
However, we note that the theoretical guarantee~\eqref{eq:cd++_complexity} for CD++ requires the matrix $\mA$ to be stored in memory and preprocessed by a randomized Hadamard transform, or otherwise requires $\mA$ to possess some natural incoherence properties. This makes it more difficult to apply for large-scale problems, such as in the upcoming experiment.

\paragraph{KRR problem on real-life dataset with fast spectral decay.}

\begin{figure}[!htb]
    \centering
    \includegraphics[width=0.495\linewidth, trim={0.2cm 0.2cm 0.2cm 0.3cm}, clip]{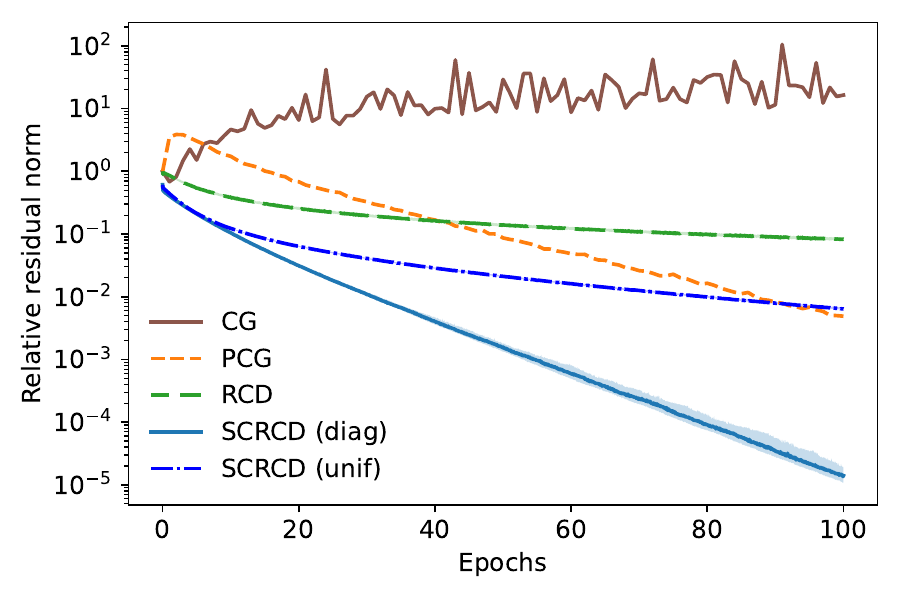}
    \includegraphics[width=0.495\linewidth, trim={0.2cm 0.2cm 0.2cm 0.3cm}, clip]{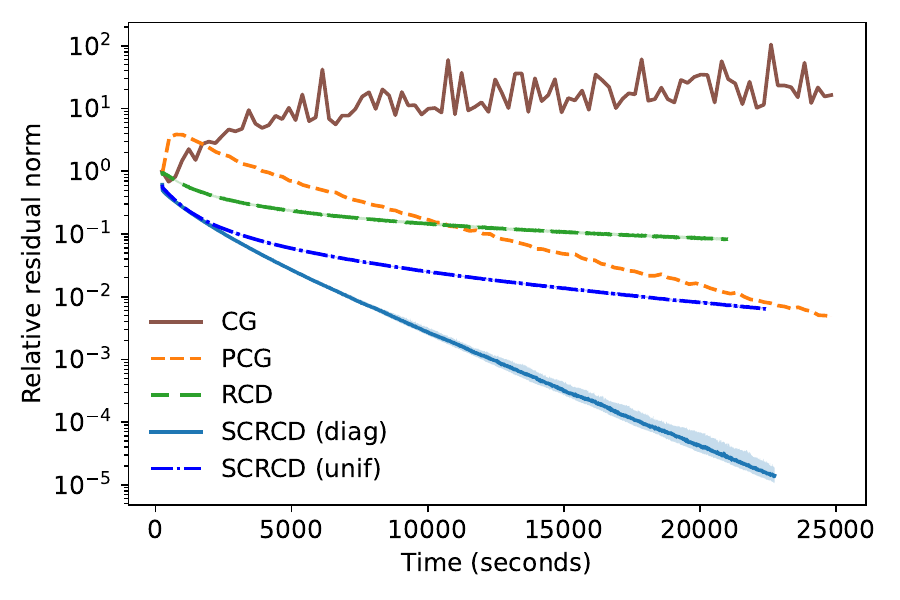}
    \includegraphics[width=0.495\linewidth, trim={0.2cm 0.2cm 0.3cm 0cm}, clip]{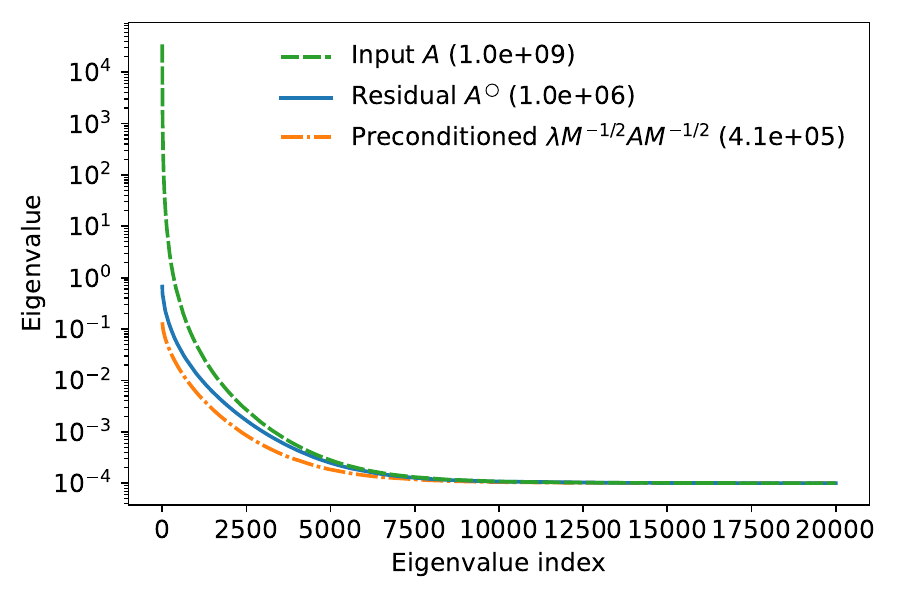}
    \includegraphics[width=0.495\linewidth, trim={0.2cm 0.2cm 0cm 0.3cm}, clip]{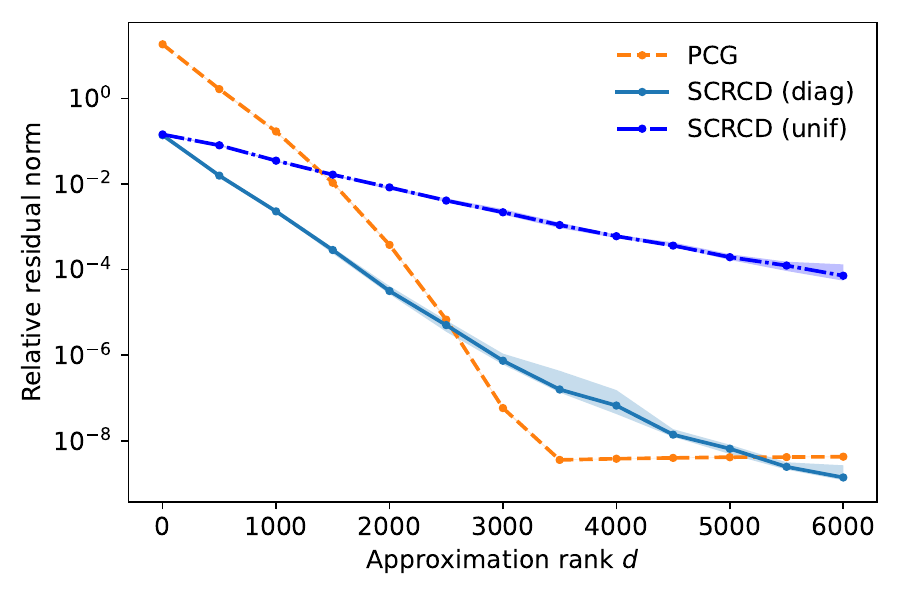}
    \vspace{-1.5\floatsep}
    \caption{
    Solving the KRR problem $(\mK + \lambda \mI) \vx = \vy$ on the \texttt{hls4ml\_lhc\_jets} dataset with $n = 100,000$ samples and a very small amount of regularization $\lambda = 10^{-9} n$.
    \textbf{(Top left)} Relative residual norm $\norm{(\mK + \lambda \mI) \vx^k - \vy}_2 / \norm{\vy}_2$ over 100 epochs for SC-RCD (with $d = 1,000$ and $\ell = 1,000$), as well as RCD (also with $\ell = 1,000$), CG, and PCG (also with $d = 1,000$).
    The lines (resp.\ shaded interval) depict the median (resp.\ 0.2- and 0.8-quantiles) over 100 independent runs.
    \textbf{(Top right)} Error in terms of time elapsed. The kernel matrix $\mK$ is not stored in memory, and entry evaluations represent the dominant computational cost.
    \textbf{(Bottom left)} The leading $20,000$ eigenvalues of $\mA = \mK + \lambda \mI$, the residual $\mA^{\circ}$, and the preconditioned $\lambda \mM^{-1/2} \mA \mM^{-1/2}$ corresponding to the rank-$d$ Nystr\"{o}m approximation, with their condition numbers $\sum_i \lambda_i / \lambda_{\mathrm{min}}^+$ reported in brackets.
    \textbf{(Bottom right)} The error after 50 epochs for SC-RCD and PCG with various approximation ranks $d$.}
    \vspace{-0.5\floatsep}
    \label{fig:lhc100}
\end{figure}

In the next experiment, we investigate the performance of the SC-RCD method for solving large-scale kernel ridge regression (KRR) problems where the matrix cannot be stored in memory, and the dominant computational cost comes from matrix evaluations.

To give a brief overview of KRR, suppose that we are given $n$ data points $\{ (\vz_i, y_i) \}_{i=1}^n$ with features $\vz_i \in \reals^p$ and response variable $y_i \in \reals$. A kernel matrix $\mK \in \reals^{n \times n}$ is formed with $\mK_{i,j} = \kappa(\vz_i, \vz_j)$ for some positive definite kernel function $\kappa: \reals^n \times \reals^n \to \reals$. For our experiments, we will use the Gaussian kernel with bandwidth parameter $\sigma > 0$, defined by
$
    \kappa(\vz_i, \vz_j) = \exp\left( -\norm{\vz_i - \vz_j}_2^2 / (2 \sigma^2) \right).
$
Then, given a regularization parameter $\lambda > 0$, the goal of KRR is to find a vector $\vx \in \reals^n$ to minimize $\norm{\mK \vx - \vy}_2^2 + \lambda \vx^{\tran} \mK \vx$, which is equivalent to solving the positive definite system $(\mK + \lambda \mI) \vx = \vy$.

In Figure~\ref{fig:lhc100}, we take $n = 100,000$ samples from the \texttt{hls4ml\_lhc\_jets} dataset~\cite{hls4ml_lhc_jet_dataset}, which consists of features $\vz_i \in \reals^{16}$ for predicting jet classes from LHC proton-proton collisions. We consider solving $(\mK + \lambda \mI) \vx = \vy$ using the Gaussian kernel with bandwidth $\sigma = 3$ and a small regularization parameter $\lambda = 10^{-9} n$, which results in a more ill-conditioned and challenging system to solve.
Figure~\ref{fig:lhc100} (bottom) shows that the eigenvalues of $\mK$ decay exponentially, so the regularized system has a flat-tailed spectrum that quickly decays to $\lambda$.

We consider the SC-RCD method where the blocks consist of indices sampled with weights proportional to the diagonal of the residual matrix or uniformly, as well as RCD (where both forms of sampling are equivalent since $\mK$ has unit diagonals). In each iteration, we perform inexact projections, where $\valpha$ is solved up to a relative error of $0.05$ using CG with a simple Jacobi preconditioner (i.e., diagonal normalization). For comparison, we also solve the system using the preconditioned CG method (PCG) proposed by D{\'{i}}az et al.~\cite{DiazEtAl2023robust}, which uses a preconditioner $\mM = \mF \mF^{\tran} + \lambda \mI$, constructed from an approximation $\widehat{\mK} = \mF \mF^{\tran}$ of $\mK$ using RPCholesky.

Figure~\ref{fig:lhc100} (top) show the convergence rate of these iterative solvers, measured in terms of the number of epochs completed (left) and the total time elapsed (right). We observe that the SC-RCD method with a relatively small Nystr\"{o}m approximation significantly improves upon RCD, analogous to the improvement of PCG over CG as shown by~\cite{DiazEtAl2023robust}. 
Furthermore, SC-RCD converges faster than PCG with the same approximation rank $d = 1,000$ used. However, Figure~\ref{fig:lhc100} (bottom right) shows that the improvement in the rate of PCG with larger $d$ is faster than for SC-RCD. In practice, the choice of $d$ may be limited by the availability of memory.

This experiment provides limited evidence of how coordinate descent-based methods can be competitive with methods such as preconditioned CG for large-scale problems where entry evaluations are costly. 
There may be further computational advantages of CD-based methods, such as the possibility for acceleration~\cite{TuEtAl2017locality} and parallelization (e.g., averaging over mini-batches~\cite[Algorithm~2]{RichtarikTakac2020}), but we do not investigate these possibilities.

\paragraph{KRR problem with slower spectral decay.}

\begin{figure}[!htb]
    \centering
    \includegraphics[width=0.495\linewidth, trim={0.2cm 0.4cm 0.2cm 0.3cm}, clip]{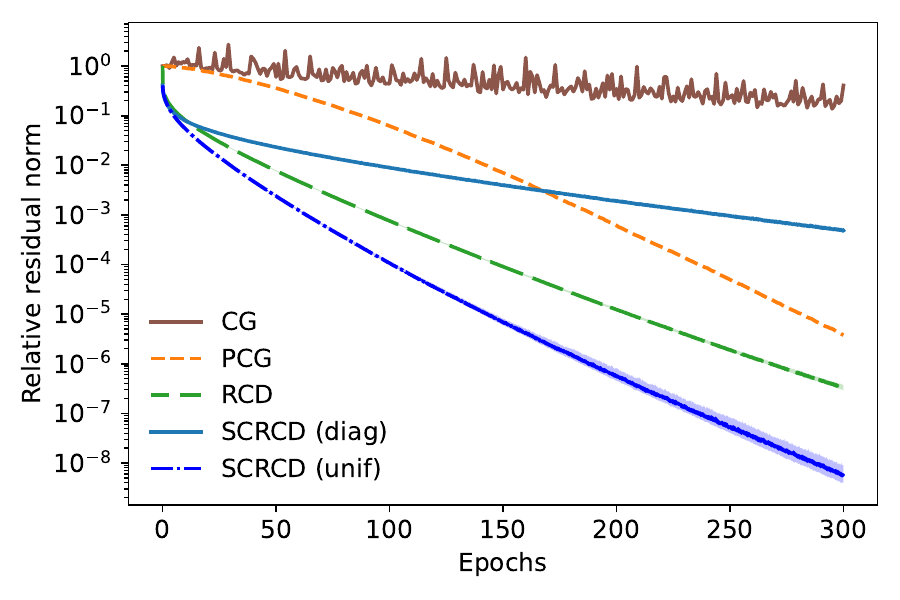}
    \includegraphics[width=0.495\linewidth, trim={0.2cm 0.4cm 0.2cm 0.3cm}, clip]{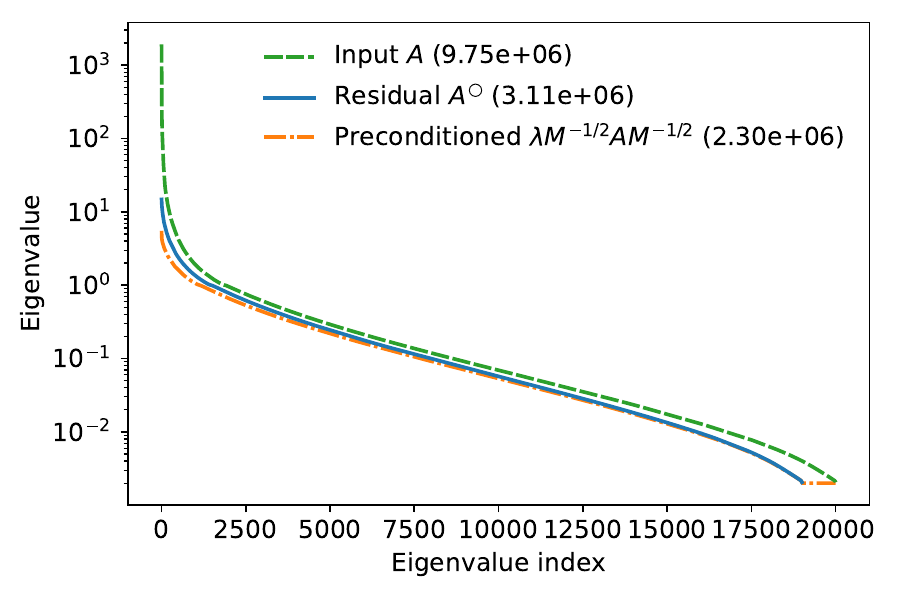}
    \vspace{-1.5\floatsep}
    \caption{
    Solving the KRR problem $(\mK + \lambda \mI) \vx = \vy$ on the \texttt{sensorless} dataset with $n = 20,000$ samples and a small regularization parameter $\lambda = 10^{-7} n$.
    \textbf{(Left)} Relative residual norm over 300 epochs for SC-RCD (with $d = 1,000$ and $\ell = 1,000$), RCD (also with $\ell = 1,000$), CG, and PCG (also with $d = 1,000$).
    \textbf{(Right)} Eigenvalue spectra of $\mA = \mK + \lambda \mI$, the residual $\mA^{\circ}$, and the preconditioned $\lambda \mM^{-1/2} \mA \mM^{-1/2}$ corresponding to the rank-$d$ Nystr\"{o}m approximation, with their condition numbers $\sum_i \lambda_i / \lambda_{\mathrm{min}}^+$ reported in brackets. 
    }
    \vspace{-0.5\floatsep}
    \label{fig:sensorless20}
\end{figure}

In the final experiment, we investigate the performance of SC-RCD for solving another KRR problem on a dataset with slower spectral decay. We also demonstrate that how the blocks are sampled can play a critical role in the convergence rate of SC-RCD.

Specifically, we consider solving $(\mK + \lambda \mI) \vx = \vy$ using $n = 20,000$ samples from the \texttt{sensorless} dataset~\cite{LIBSVM}, which consists of features $\vz_i \in \reals^{48}$, the Gaussian kernel with bandwidth $\sigma = 3$, and a small regularization parameter $\lambda = 10^{-7} n$.
This dataset was identified as one of the more difficult KRR problems studied in~\cite{DiazEtAl2023robust}.
Figure~\ref{fig:sensorless20} (right) confirms that the kernel matrix exhibits much slower spectral decay, making it far more difficult to find a good low-rank approximation.
Figure~\ref{fig:sensorless20} (left) shows that SC-RCD with uniformly sampled blocks exhibits the fastest convergence rate, and diagonal sampling---which has been the most effective for systems with rapid spectral decay so far---actually performs poorly.
We observe that SC-RCD with uniform sampling improves upon RCD, as expected from Proposition~\ref{prop:sc-rcd_convrate_unif}.

\section{Conclusion}

We proposed and analyzed the SC-RCD method for solving psd linear systems $\mA \vx = \vb$, which combines the classical randomized block coordinate descent algorithm with a rank-$d$ matrix approximation, efficiently computable using an algorithm such as RPCholesky.
We proved that it is a lightweight algorithm that can obtain an $\epsilon$-relative error solution using $O(nd)$ memory and $O\bigl( (n^2 + nd^2) \cdot \bar{\kappa}_r(\mA) \log(1/\epsilon) \bigr)$ arithmetic operations, where $\bar{\kappa}_r(\mA) = \sum_{i > r} \lambda_i(\mA) / \lambda_{\mathrm{min}}^+(\mA)$ is the normalized tail condition number of $\mA$ and $r$ is typically close to the approximation rank $d$. This makes SC-RCD effective for solving large-scale, dense systems with rapid spectral decay, such as those arising in kernel ridge regression. We presented numerical experiments in support of these results.

Some directions for future work include combining the subspace-constrained framework with other computational techniques, such as those employed in~\cite{DerezinskiEtAl2025beyond}, for further practical efficiency. For example, momentum-based acceleration would  help attain an improved complexity in terms of the condition number dependence.
A more general question suggested by this work is how one can efficiently learn subspaces that control different parts of the spectrum, such as small spectral outliers. 
Another future direction is to investigate how constraining the dynamics within a selected subspace can be used to accelerate other iterative algorithms based on the sketch-and-project approach, particularly those addressing more general nonlinear problems.

\section*{Acknowledgments}

We would like to thank Ethan Epperly and Robert Webber for helpful suggestions and discussions.
We would also like to thank the anonymous reviewers for their comments and suggestions that have improved the presentation of the paper.

\phantomsection
\addcontentsline{toc}{section}{References}
\section*{References}
\printbibliography[heading=none]

\begin{appendix}

\section{Subspace-constrained sketch-and-project technical proofs} \label{sec:sc-sap_proofs}

In this section, we give the technical proofs of Lemmas~\ref{lem:update} and~\ref{lem:error_invariance} for subspace-constrained sketch-and-project from Section~\ref{sec:sc-sap}.

\begin{proof}[Proof of Lemma~\ref{lem:update}]
Given the iterate $\vx^k$ after the $k$th iteration, define $\bar{\vz} := \vx^k - \vx^{k+1}$. Then from~\eqref{eq:sc_sap}, together with the change of variables $\vw = \mB^{1/2} (\vx^k - \vx)$, we have
\begin{equation} \label{eq:opt-problem}
    \mB^{1/2} \bar{\vz} =
    \left[
    \begin{tabular}{rl}
        $\displaystyle\argmin_{\vw \in \reals^n}$ & $\vw^{\tran} \vw$ \\
        \text{such that} & $\mS \mA \mB^{-1/2} \vw = \mS(\mA \vx^k - \vb)$, \\
                         & $\mQ \mA \mB^{-1/2} \vw = \mQ(\mA \vx^k - \vb)$.
    \end{tabular}
    \right]
\end{equation}
Note that since $\mQ \mA \vx^k = \mQ \vb$, the second constraint is equivalent to $\vw \in \nullspace(\mQ \mA \mB^{-1/2})$, or $\vw = \mP \vw$.
By introducing Lagrange multipliers $\vlambda \in \reals^{\ell}$, $\vtau \in \reals^{d}$, we deduce that the optimal $\vw_{\ast} = \mB^{1/2} \bar{\vz}$ solves the first-order conditions
\begin{align*}
    \vw_{\ast}  + \mB^{-1/2} \mA^{\tran} \mS^{\tran} \vlambda + \mB^{-1/2} \mA^{\tran} \mQ^{\tran} \vtau &= \vzero, \\
    \mS \mA \mB^{-1/2} \vw_{\ast}  &= \mS(\mA \vx^k - \vb), \\
    \mP \vw_{\ast}  &= \vw_{\ast}.
\end{align*}
First, by definition of $\mP$, we have $\mP \mB^{-1/2} \mA^{\tran} \mQ^{\tran} = \vzero$.
Hence, by multiplying the first equation by $\mP$, we obtain $\mP \vw_{\ast} + \mP \mB^{-1/2} \mA^{\tran} \mS^{\tran} \vlambda = \vzero$. By combining this with $\mP \vw_{\ast} = \vw_{\ast}$, we deduce that
\[
    \vw_{\ast} = -\mP \mB^{-1/2} \mA^{\tran} \mS^{\tran} \vlambda.
\]
Thus, $\vw_{\ast} \in \range(\mP \mB^{-1/2} \mA^{\tran} \mS^{\tran})$, which implies that $\mZ \vw_{\ast} = \vw_{\ast}$. This shows that
\begin{align} \label{eq:w-update}
    \vw_{\ast} = \mZ \vw_{\ast} &= \mP \mB^{-1/2} \mA^{\tran} \mS^{\tran} (\mS \mA \mB^{-1/2} \mP \mB^{-1/2} \mA^{\tran} \mS^{\tran})^{\dagger} \mS \mA \mB^{-1/2} \mP \vw_{\ast} \nonumber\\
    &= \mP \mB^{-1/2} \mA^{\tran} \mS^{\tran} (\mS \mA \mB^{-1/2} \mP \mB^{-1/2} \mA^{\tran} \mS^{\tran})^{\dagger} \mS(\mA \vx^k - \vb),
\end{align}
where in the last line we used that $\mP \vw_{\ast} = \vw_{\ast}$, and so $\mS \mA \mB^{-1/2} \mP\vw_{\ast} = \mS(\mA \vx^k - \vb)$. Recalling that $\vx^{k+1} = \vx^k - \mB^{-1/2} \vw_{\ast}$, the update rule~\eqref{eq:lem_update_1} follows from~\eqref{eq:w-update}.

Next, from the update rule~\eqref{eq:lem_update_1}, we obtain
\begin{align*}
    \mB^{1/2}(\vx^{k+1} - \vx^*) &= \mB^{1/2} (\vx^k - \vx^*) - \mP \mB^{-1/2} \mA^{\tran} \mS^{\tran} (\mS \mA \mB^{-1/2} \mP \mB^{-1/2} \mA^{\tran} \mS^{\tran})^{\dagger} \mS \mA(\vx^k - \vx^*) \\
    &= \mB^{1/2} (\vx^k - \vx^*) - \mZ \mB^{1/2}(\vx^k - \vx^*),
\end{align*}
where in the last line we used $\vx^k - \vx^* = \mB^{-1/2} \mP \mB^{1/2}(\vx^k - \vx^*)$.
Indeed, since $\mQ \mA(\vx^k - \vx^*) = \mQ(\mA \vx^k - \vb) = \vzero$, we have $\mB^{1/2} (\vx^k - \vx^*) \in \nullspace(\mQ \mA \mB^{-1/2})$ and so $\mP \mB^{1/2}(\vx^k - \vx^*) = \mB^{1/2} (\vx^k - \vx^*)$.
This concludes the proof of the fixed point iteration~\eqref{eq:lem_update_2}.
\end{proof}

\begin{proof}[Proof of Lemma~\ref{lem:error_invariance}]
From the formula~\eqref{eq:xstar_formula} for $\vx^*$, we see that $\mB^{1/2}(\vx^0 - \vx^*) \in \range(\mB^{-1/2} \mA^{\tran})$. Furthermore, since the initial iterate solves $\mQ \mA \vx^0 = \mQ \vb$, we have $\mQ \mA(\vx^0 - \vx^*) = \mQ(\mA \vx^0 - \vb) = \vzero$, and thus $\mB^{1/2}(\vx^0 - \vx^*) \in \nullspace(\mQ \mA \mB^{-1/2}) = \range(\mP)$. Hence,
\[
    \mB^{1/2}(\vx^0 - \vx^*) = \mP \mB^{1/2}(\vx^0 - \vx^*) \in \range(\mP \mB^{-1/2} \mA^{\tran}).
\]
Next, observe that $\mB^{1/2}(\vx^k - \vx^*) \in \range(\mP)$ for all $k \geq 0$ since the subsequent iterates $\vx^k$ continue to solve $\mQ \mA \vx^k = \mQ \vb$.
From the fixed point iteration~\eqref{eq:lem_update_2} in Lemma~\ref{lem:update}, we have $\mB^{1/2}(\vx^k - \vx^*) = \mB^{1/2}(\vx^{k-1} - \vx^*) - \mZ \mB^{1/2}(\vx^{k-1} - \vx^*)$.
Since $\mZ$ is the orthogonal projector onto $\range(\mP \mB^{-1/2} \mA^{\tran} \mS^{\tran})$, it follows from induction that $\mB^{1/2} (\vx^k - \vx^*) \in \range(\mP \mB^{-1/2} \mA^{\tran})$ for all $k \geq 0$.
\end{proof}

\section{Extension of SC-RCD for least-squares problems} \label{sec:least_squares}

\begin{algorithm}[!htb]
\caption{SC-RCD: least squares} \label{alg:sc-rcd_ls}
\begin{algorithmic}[1]
    \Require{Matrix $\mA \in \reals^{m \times n}$, vector $\vb \in \reals^{m}$, approximation rank $d$, block size $\ell$}
    \Ensure{Approximate solution $\vx \in \reals^n$ of $\argmin_{\vx} \norm{\mA \vx - \vb}_2$, residual vector $\vr = \mA \vx - \vb \in \reals^m$}
    \State Compute pivot set $\mathcal{S} \subseteq [n]$ and $\mQ \in \reals^{m \times d}$, $\mR \in \reals^{d \times n}$ defining column-pivoted partial QR decomp. $\widehat{\mA} = \mQ \mR$, and set $\mA^{\circ} \leftarrow \mA - \widehat{\mA}$  \Comment{E.g., ~\cite[Alg.~7]{ChenEtAl2024}}
    \State Compute $\mD \leftarrow (\mR_{:,\mathcal{S}})^{-1} \mQ^{\tran} \in \reals^{d \times m}$ and $\mC \leftarrow \mD \mA \in \reals^{d \times n}$
    \State Set $\vx \leftarrow \mD \vb \in \reals^n$ and $\vr \leftarrow \mA \vx - \vb \in \reals^m$
    \State Set $\vp \leftarrow \vzero_{n \times 1}$, and compute $\vp_j \leftarrow \norm{\mA^{\circ}_{:,j}}_2^2 / \norm{\mA^{\circ}}_F^2$ for $j = [n] \setminus \mathcal{S}$
    \For{$k = 1, 2, \ldots$}
        \State Sample subset $\mathcal{J} = \{ j_1, \ldots, j_\ell \}$ of $\ell$ columns with $j_1, \ldots, j_\ell \sim \vp$ i.i.d.  \label{alg:sc-rcd_ls_sampling_step}
        \State Solve $(\mA^{\circ}_{:, \mathcal{J}})^{\tran} \mA^{\circ}_{:, \mathcal{J}} \valpha = (\mA^{\circ}_{:, \mathcal{J}})^{\tran} \vr$ for $\valpha \in \reals^\ell$
        \State $\vbeta \leftarrow \mC_{:, \mathcal{J}} \valpha \in \reals^d$
        \State $\vx_{\mathcal{J}} \leftarrow \vx_{\mathcal{J}} - \valpha$, $\vx_{\mathcal{S}} \leftarrow \vx_{\mathcal{S}} + \vbeta$
        \State $\vr \leftarrow \vr - \mA^{\circ}_{:, \mathcal{J}} \valpha$
    \EndFor
\end{algorithmic}
\end{algorithm}

In this section, we will briefly explain how the SC-RCD method can be adapted to solving the least-squares problem
\begin{equation} \label{eq:ls}
    \argmin_{\vx \in \reals^n} \norm{\mA \vx - \vb}_2, \quad \text{where } \mA \in \reals^{m \times n}.
\end{equation}
Since~\eqref{eq:ls} reduces to the solution of the normal equations $\mA^{\tran} \mA \vx = \mA^{\tran} \vb$, this can be solved by applying the psd SC-RCD method on the normal equations. In the following, we will show that the algorithm can be implemented without explicitly forming the psd matrix $\mA^{\tran} \mA$ as a column-action method; see Algorithm~\ref{alg:sc-rcd_ls} for pseudocode.

In this setting, the natural analogue of the Nystr\"{o}m approximation is the \emph{column projection approximation} $\mPi_{\mA, \mathcal{S}} \mA$ of $\mA$, where $\mPi_{\mA, \mathcal{S}}$ is the orthogonal projector onto the span of the columns of $\mA$ indexed by $\mathcal{S}$.
The key observation is that the entries of the Gram matrix $(\mA_{:,\mathcal{S}})^{\tran} \mA_{:,\mathcal{S}}$ give the inner products between the columns of $\mA$ indexed by $\mathcal{S}$; in particular, the squared column norms can be read off the diagonal (see, e.g.,~\cite[\S 3]{ChenEtAl2024} for more details on this classical connection).

Indeed, the RPCholesky algorithm can be naturally adapted to a \emph{randomly pivoted QR} algorithm that outputs a column projection approximation in the form of a partial QR decomposition $\widehat{\mA} = \mQ \mR$ of $\mA$, where $\mQ \in \reals^{m \times d}$ has orthonormal columns and $\mR \in \reals^{d \times n}$ is upper triangular (after pivoting to bring the columns in $\mathcal{S}$ to the front), such that $\widehat{\mA}_{:,\mathcal{S}} = \mA_{:,\mathcal{S}}$ and $\range(\widehat{\mA}) = \range(\mA_{:,\mathcal{S}})$; see \cite[Algorithm~7]{ChenEtAl2024}.

\subsubsection*{Derivation of Algorithm~\ref{alg:sc-rcd_ls}}
Suppose that we are given a column projection approximation $\mPi_{\mA, \mathcal{S}} \mA$ of $\mA$ with $d$ pivots $\mathcal{S} \subseteq [n]$ and an initial iterate $\vx^0$ satisfying $(\mA^{\tran} \mA)_{\mathcal{S},:} \vx^0 = (\mA^{\tran} \vb)_{\mathcal{S}}$, or equivalently $(\mA_{:,\mathcal{S}})^{\tran} (\mA \vx^0 - \vb) = \vzero$.
Let
\begin{equation} \label{eq:column_approx}
    \widehat{\mA} := \mPi_{\mA, \mathcal{S}} \mA
    \quad\text{and}\quad
    \mA^{\circ} := \mA - \widehat{\mA}
\end{equation}
be the column projection approximation of $\mA$ with respect to the columns indexed by $\mathcal{S}$ and the corresponding residual matrix respectively.
Observe that if $$\mP = \mI - (\mA^{\tran} \mA)^{1/2} \ve_{\mathcal{S}} ((\mA^{\tran} \mA)_{\mathcal{S},\mathcal{S}})^{\dagger} \ve_{\mathcal{S}}^{\tran} (\mA^{\tran} \mA)^{1/2}$$ is the orthogonal projector onto $\nullspace(\ve_{\mathcal{S}}^{\tran} (\mA^{\tran} \mA)^{1/2})$, then
\begin{equation} \label{eq:column_approx_proj}
    (\mA^{\tran} \mA)^{1/2} \mP (\mA^{\tran} \mA)^{1/2}
    = \mA^{\tran} (\mI - \mPi_{\mA, \mathcal{S}}) \mA
    = (\mA^{\circ})^{\tran} \mA^{\circ}.
\end{equation}
Hence, after some algebraic manipulations, the update~\eqref{eq:sc-rcd_update} for the iterate $\vx^k$ for solving the psd system $\mA^{\tran} \mA \vx = \mA^{\tran} \vb$ with SC-RCD is equivalent to the following:
\begin{equation} \label{eq:sc-rcd_ls_update}
    \vx^{k+1}
    = \vx^k - \ve_{\mathcal{J}} \valpha^k + \ve_{\mathcal{S}} \vbeta^k,
\end{equation}
where
\[
    \valpha^k := (((\mA^{\circ})^{\tran} \mA^{\circ})_{\mathcal{J}, \mathcal{J}})^{\dagger} (\mA_{:, \mathcal{J}})^{\tran} \vr^k
    \quad\text{and}\quad
    \vbeta^k := \mC_{:, \mathcal{J}} \valpha^k,
\]
with $\mC := ((\mA^{\tran} \mA)_{\mathcal{S},\mathcal{S}})^{\dagger} (\mA^{\tran} \mA)_{\mathcal{S},:}$ and $\vr^k = \mA \vx^k - \vb$.
Note that the subspace constraint maintains the invariant $(\mA_{:,\mathcal{S}})^{\tran} (\mA \vx^k - \vb) = \vzero$. Therefore, $(\widehat{\mA}_{:,\mathcal{J}})^{\tran} \vr^k = (\mA_{:,\mathcal{J}})^{\tran} \mPi_{\mA, \mathcal{S}} \vr^k = \vzero$, and we can replace $(\mA_{:, \mathcal{J}})^{\tran} \vr^k$ with $(\mA^{\circ}_{:, \mathcal{J}})^{\tran} \vr^k$ (i.e., $\valpha^k$ is the solution of a highly overdetermined least squares problem $\argmin_{\valpha} \norm{\mA^{\circ}_{:,\mathcal{J}} \valpha - \vr^k}_2$).
Furthermore, the update~\eqref{eq:sc-rcd_update_resid} for the residual vector $\vr^k$ is equivalent to
\begin{align} \label{eq:sc-rcd_ls_update_resid}
    \vr^{k+1}
    &= \vr^{k} - (\mI - \mPi_{\mA, \mathcal{S}}) \mA_{:, \mathcal{J}} \valpha^k
    = \vr^{k} - \mA^{\circ}_{:, \mathcal{J}} \valpha^k.
\end{align}
The updates~\eqref{eq:sc-rcd_ls_update} and~\eqref{eq:sc-rcd_ls_update_resid} are summarized in the pseudocode in Algorithm~\ref{alg:sc-rcd_ls}.
Each iteration requires accessing the columns of $\mA$ and $\widehat{\mA}$ indexed by the sampled block $\mathcal{J}$.
Similar to the psd case (Section~\ref{sec:sc-rcd_implementation}), the partial QR structure of the approximation $\widehat{\mA}$ can be used to compute the auxiliary matrix $\mC = (\mA_{:,\mathcal{S}})^{\dagger} \mA$ and a valid initialization $\vx^0 = (\mA_{:,\mathcal{S}})^{\dagger} \vb$ more efficiently by observing that $(\mA_{:,\mathcal{S}})^{\dagger} = (\mQ \mR_{:,\mathcal{S}})^{\dagger} = (\mR_{:,\mathcal{S}})^{-1} \mQ^{\tran}$ if $\mA_{:,\mathcal{S}}$ has full rank, recalling that $\mR_{:,\mathcal{S}}$ is upper triangular.

The following result states the convergence rate of the least squares SC-RCD method (Algorithm~\ref{alg:sc-rcd_ls}), which immediately follows from Theorem~\ref{thm:sc-rcd_convrate} for a fixed pivot set $\mathcal{S}$. As in the psd SC-RCD case, this can be further combined with bounds for the quality of the low-rank approximation $\widehat{\mA}$ such as~\cite[Corollary~5.2]{ChenEtAl2024}, which we do not elaborate on.

\begin{theorem} \label{thm:sc-rcd_ls_convrate}
Let $\mA \in \reals^{m \times n}$ and $\vx^*$ be any solution of the least squares problem~\eqref{eq:ls}. Suppose that $\{ \vx^k \}_{k \geq 0}$ are the iterates defined by~\eqref{eq:sc-rcd_ls_update} with a fixed subset $\mathcal{S} \subseteq [n]$, and the block $\mathcal{J} = \{ j_1, \ldots, j_\ell \}$ in each iteration consists of $\ell$ columns independently sampled according to the distribution $\{ \norm{\mA^{\circ}_{:,j}}_2^2 / \norm{\mA^{\circ}}_F^2 \}_{j=1}^n$. Then
\[
    \E \norm{\vx^k - \vx^*}_{\mA^{\tran} \mA}^2 \leq \left( 1 - \frac{\sigma_{\mathrm{min}}^+(\mA^{\circ})^2}{\norm{\mA^{\circ}}_F^2} \right)^{k \ell} \cdot \norm{\vx^0 - \vx^*}_{\mA^{\tran} \mA}^2,
\]
where $\mA^{\circ} = \mA - \mPi_{\mA, \mathcal{S}} \mA$, and $\sigma_{\mathrm{min}}^+(\mA^{\circ})$ is the smallest non-zero singular value of $\mA^{\circ}$.
Note that $\norm{\vx^k - \vx^*}_{\mA^{\tran} \mA}^2 = \norm{\mA \vx^k - \vb}_2^2 - \norm{\mA \vx^* - \vb}_2^2$ measures the suboptimality in the least squares objective.
\end{theorem}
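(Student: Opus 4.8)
The plan is to recognize Algorithm~\ref{alg:sc-rcd_ls} as an exact instance of the psd SC-RCD method applied to the normal equations $\mM \vx = \mA^{\tran} \vb$ with $\mM := \mA^{\tran} \mA$, and then to invoke Theorem~\ref{thm:sc-rcd_convrate} directly, translating every quantity back into the least-squares notation. First I would note that $\vx^*$ is a solution of the least-squares problem~\eqref{eq:ls} if and only if it solves the normal equations $\mM \vx = \mA^{\tran} \vb$, so $\vx^*$ is an admissible target in Theorem~\ref{thm:sc-rcd_convrate} applied to the psd matrix $\mM$ (which need not be invertible, but the theorem covers the general psd case).

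The core of the argument is the identification of the residual matrix. Writing $\mM^{\circ} = \mM - \mM \langle \mathcal{S} \rangle = \mM^{1/2} \mP \mM^{1/2}$ for the Nyström residual of $\mM$ as in~\eqref{eq:nystrom_proj}, the key identity~\eqref{eq:column_approx_proj} gives $\mM^{\circ} = (\mA^{\circ})^{\tran} \mA^{\circ}$. From this I read off the three ingredients needed for Theorem~\ref{thm:sc-rcd_convrate}: the diagonal entries are $\mM^{\circ}_{j,j} = \norm{\mA^{\circ}_{:,j}}_2^2$ and the trace is $\tr(\mM^{\circ}) = \norm{\mA^{\circ}}_F^2$, so the sampling distribution $\mathrm{diag}(\mM^{\circ})/\tr(\mM^{\circ})$ is exactly $\{ \norm{\mA^{\circ}_{:,j}}_2^2 / \norm{\mA^{\circ}}_F^2 \}_j$ prescribed in the statement; moreover $\lambda_{\mathrm{min}}^+(\mM^{\circ}) = \sigma_{\mathrm{min}}^+(\mA^{\circ})^2$. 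I would also confirm, relying on the derivation~\eqref{eq:sc-rcd_ls_update}--\eqref{eq:sc-rcd_ls_update_resid} already carried out, that the update~\eqref{eq:sc-rcd_ls_update} coincides with the psd SC-RCD update~\eqref{eq:sc-rcd_update} for $\mM$: here one checks that the normal-equations residual is $\mM \vx^k - \mA^{\tran} \vb = \mA^{\tran} \vr^k$, whose $\mathcal{J}$-subvector is $(\mA_{:,\mathcal{J}})^{\tran} \vr^k$, and that $\mM^{\circ}_{\mathcal{J},\mathcal{J}} = ((\mA^{\circ})^{\tran} \mA^{\circ})_{\mathcal{J},\mathcal{J}}$, so that $\valpha^k$ and $\vbeta^k$ agree with those in~\eqref{eq:sc-rcd_update}.

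With the identification complete, applying Theorem~\ref{thm:sc-rcd_convrate} to $\mM$ and using $\norm{\vz}_{\mM}^2 = \vz^{\tran} \mA^{\tran} \mA \vz = \norm{\mA \vz}_2^2 = \norm{\vz}_{\mA^{\tran}\mA}^2$ yields the claimed bound with per-iteration rate $1 - \sigma_{\mathrm{min}}^+(\mA^{\circ})^2 / \norm{\mA^{\circ}}_F^2$ raised to the power $k\ell$. Finally, for the closing identity I would expand $\norm{\vx^k - \vx^*}_{\mA^{\tran}\mA}^2 = \norm{\mA(\vx^k - \vx^*)}_2^2$ and use that $\mA \vx^* = \mPi_{\range(\mA)} \vb$ for a least-squares solution, so that $\mA(\vx^k - \vx^*) = \mPi_{\range(\mA)} \vr^k$; when the system is consistent, so that the optimal residual lies entirely in $\range(\mA)$, this reduces to $\norm{\vr^k}_2$, recovering the stated note. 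Since Theorem~\ref{thm:sc-rcd_convrate} already supplies the convergence machinery, there is no genuinely hard step: the only point deserving care is verifying the algebraic identity~\eqref{eq:column_approx_proj} together with the rank-deficient and inconsistent bookkeeping, ensuring that $\vx^*$ is interpreted as a solution of the normal equations and that the tracked residual $\vr^k$ relates to the $\mA^{\tran}\mA$-error through the projection onto $\range(\mA)$.
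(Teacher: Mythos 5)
Your proposal is correct and coincides with the paper's own argument: the paper obtains Theorem~\ref{thm:sc-rcd_ls_convrate} as an immediate consequence of Theorem~\ref{thm:sc-rcd_convrate} applied to the normal equations $\mA^{\tran}\mA\,\vx = \mA^{\tran}\vb$, with the identity~\eqref{eq:column_approx_proj} supplying exactly the translations you list (the Nystr\"{o}m residual of $\mA^{\tran}\mA$ is $(\mA^{\circ})^{\tran}\mA^{\circ}$, whence the sampling distribution $\{\norm{\mA^{\circ}_{:,j}}_2^2/\norm{\mA^{\circ}}_F^2\}_j$, $\lambda_{\mathrm{min}}^+((\mA^{\circ})^{\tran}\mA^{\circ}) = \sigma_{\mathrm{min}}^+(\mA^{\circ})^2$, $\tr((\mA^{\circ})^{\tran}\mA^{\circ}) = \norm{\mA^{\circ}}_F^2$, and $\norm{\cdot}_{\mA^{\tran}\mA}$ as the $\mB$-norm). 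Your handling of the closing identity is in fact slightly more careful than the paper's unqualified note, since $\norm{\vx^k - \vx^*}_{\mA^{\tran}\mA} = \norm{\mA(\vx^k - \vx^*)}_2$ equals $\norm{\mPi_{\range(\mA)}\vr^k}_2$, which agrees with $\norm{\vr^k}_2$ only when $\vb \in \range(\mA)$; in the inconsistent case one has $\norm{\vx^k - \vx^*}_{\mA^{\tran}\mA}^2 = \norm{\vr^k}_2^2 - \norm{\mA\vx^* - \vb}_2^2$, as your projection argument correctly indicates.
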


\section{Additional numerical experiments} \label{sec:additional_numexp}

In this section, we present additional experiments to corroborate the findings reported in Section~\ref{sec:numerical_results}. We adopt a similar KRR setup as in Figures~\ref{fig:lhc100} and \ref{fig:sensorless20}: we take $n = 20,000$ samples $(\vz_i, y_i) \in \reals^p$ from a selection of datasets considered in~\cite[Table~1]{DiazEtAl2023robust}, which are sourced from OpenML~\cite{OpenML} and LibSVM~\cite{LIBSVM}, and solve $(\mK + \lambda \mI) \vx = \vy$ using the Gaussian kernel $\mK$ with bandwidth $\sigma = 3$ and a small regularization parameter $\lambda = 10^{-8} n$.
We use SC-RCD (with $d = 1,000 \approx 7 \sqrt{n}$ and $\ell = 1,000$), RCD (with $\ell = 1,000$), CG, and PCG (with $d = 1,000$).

Figures~\ref{fig:app_numexp_rapid} and~\ref{fig:app_numexp_slower} report the convergence trajectories, showing the relative residual norm $\norm{(\mK + \lambda \mI) \vx^k - \vy}_2 / \norm{\vy}_2$ over the first 300 epochs (the median and 0.2/0.8-quantiles over 10 independent runs are reported), and the corresponding eigenvalue spectra for eight datasets, loosely grouped in terms of whether the kernel matrix exhibits rapid or slower spectral decay.

\begin{figure}[!htb]
    \centering
    \includegraphics[width=0.495\linewidth, trim={0 0.85cm 0.1cm 0.25cm}, clip]{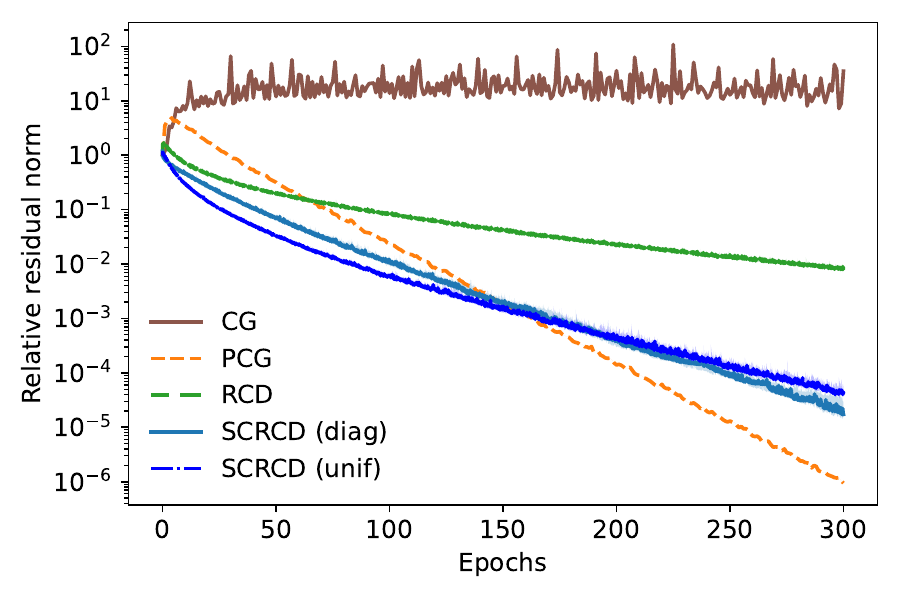}
    \includegraphics[width=0.495\linewidth, trim={0.2cm 1cm 0.3cm 0}, clip]{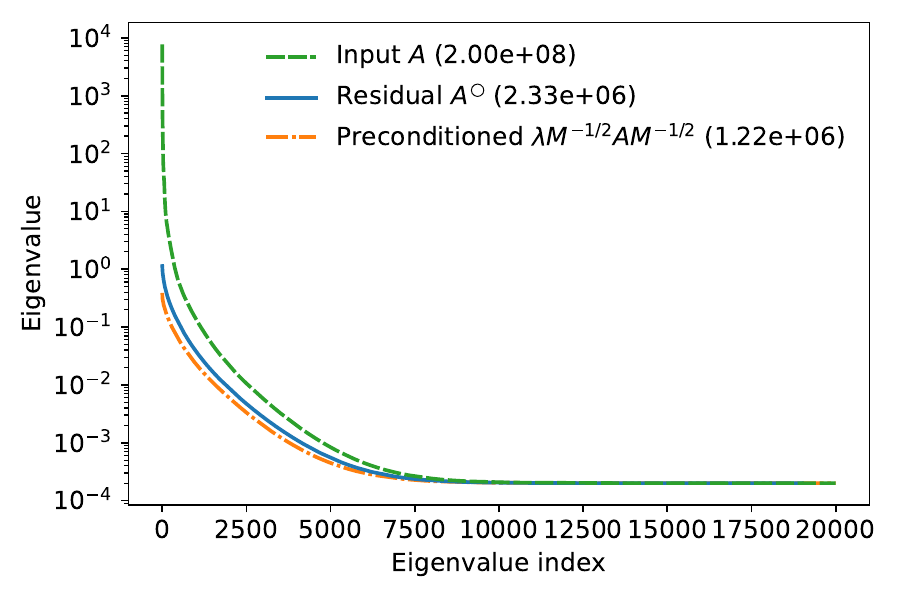}
    \includegraphics[width=0.495\linewidth, trim={0.2cm 1cm 0.1cm 0}, clip]{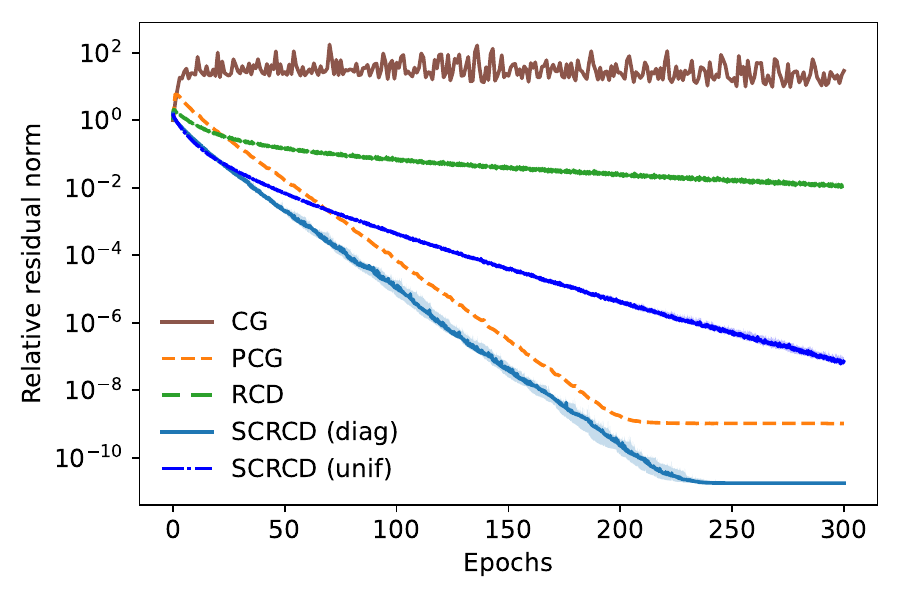}
    \includegraphics[width=0.495\linewidth, trim={0.2cm 1cm 0.3cm 0}, clip]{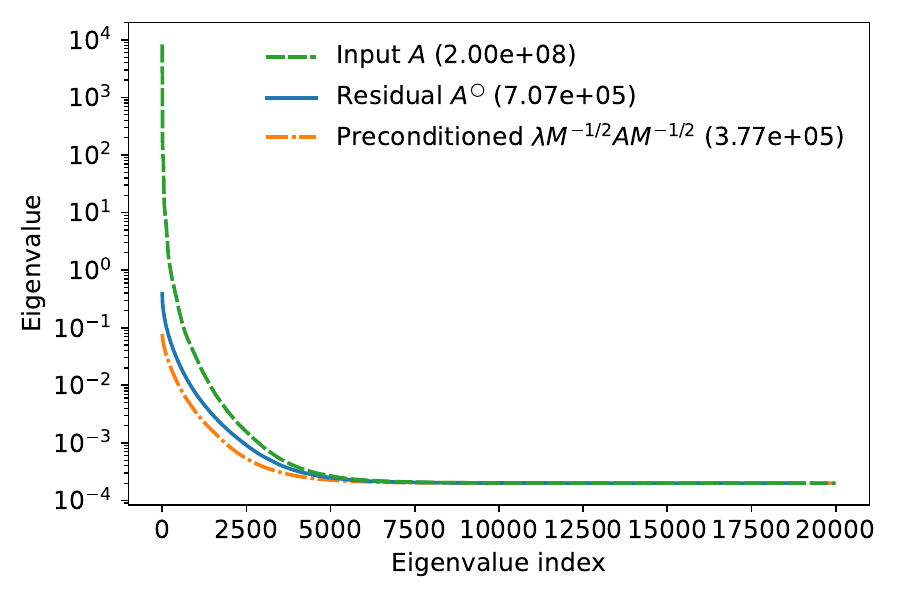}
    \includegraphics[width=0.495\linewidth, trim={0.2cm 1cm 0.3cm 0}, clip]{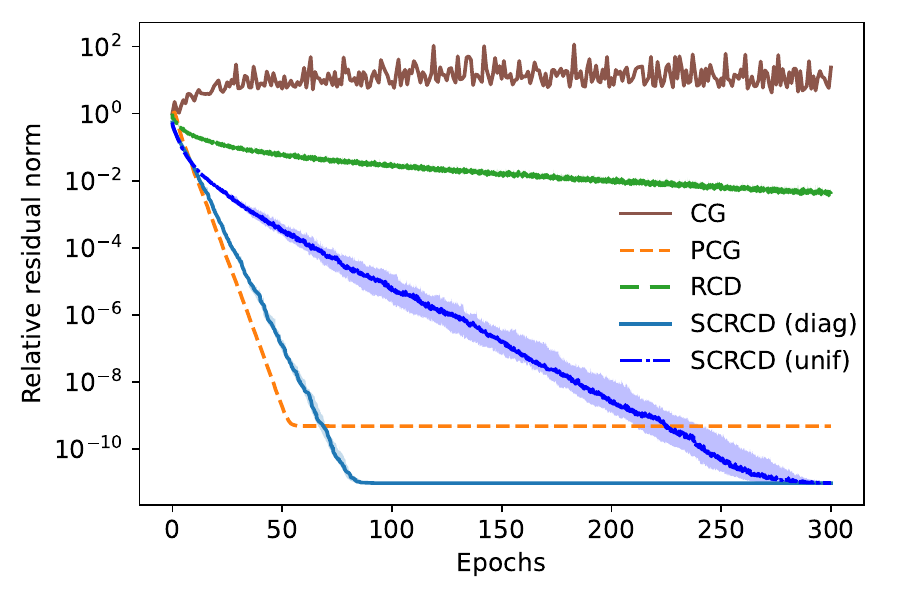}
    \includegraphics[width=0.495\linewidth, trim={0.2cm 1cm 0.3cm 0}, clip]{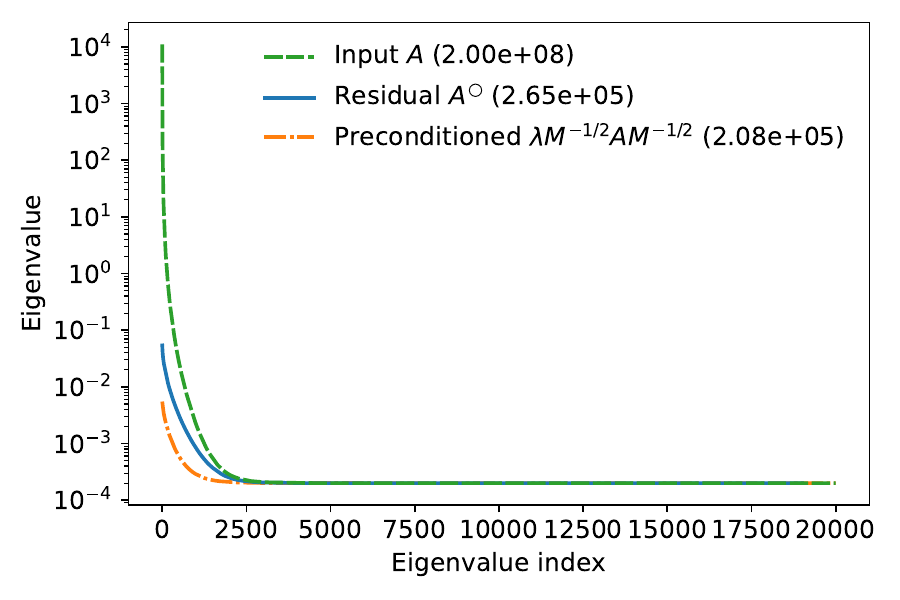}
    \includegraphics[width=0.495\linewidth, trim={0.2cm 0.3cm 0.3cm 0}, clip]{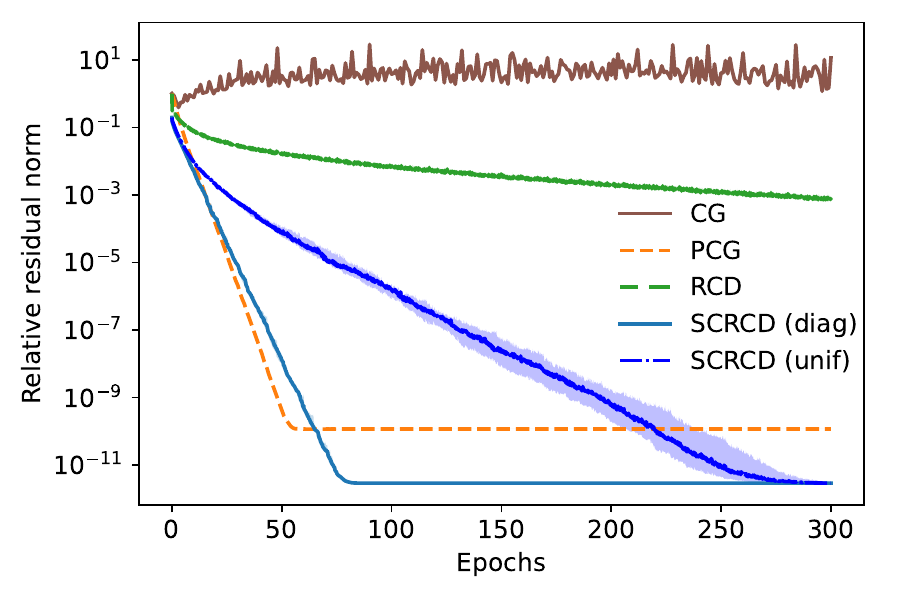}
    \includegraphics[width=0.495\linewidth, trim={0.2cm 0.3cm 0.3cm 0}, clip]{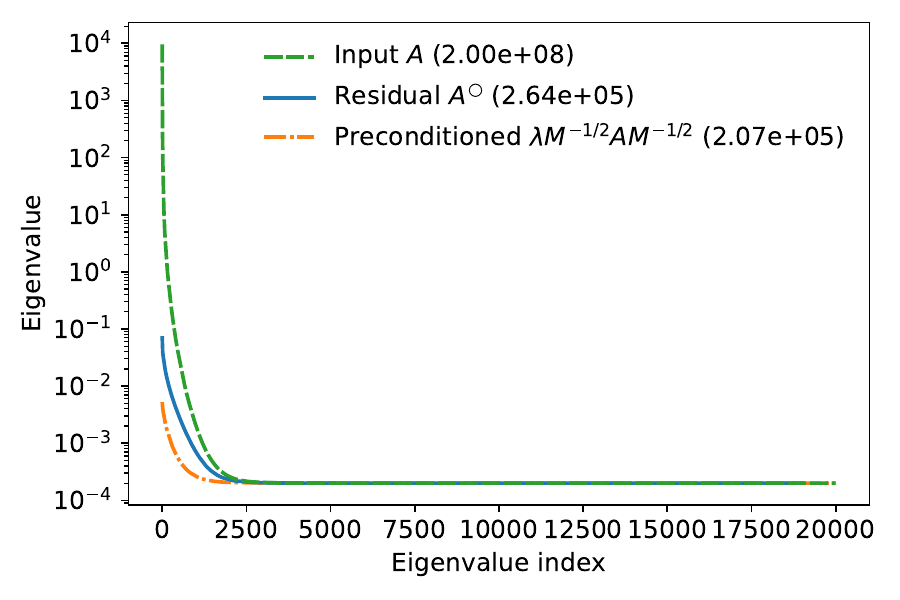}
    \vspace{-1\floatsep}
    \caption{
    For kernel matrices exhibiting rapid spectral decay, the SC-RCD method is particularly effective:
    \textbf{(Left)} Convergence trajectories and \textbf{(right)} eigenvalues for the (i) \texttt{ACSIncome} $(p=11)$, (ii) \texttt{Airlines\_DepDelay\_1M} $(p=9)$, (iii) \texttt{cod-rna} $(p=8)$, and (iv) \texttt{diamonds} $(p=9)$ datasets.
    }
    \label{fig:app_numexp_rapid}
\end{figure}

\begin{figure}[!htb]
    \centering
    \includegraphics[width=0.495\linewidth, trim={0.2cm 1cm 0.3cm 0}, clip]{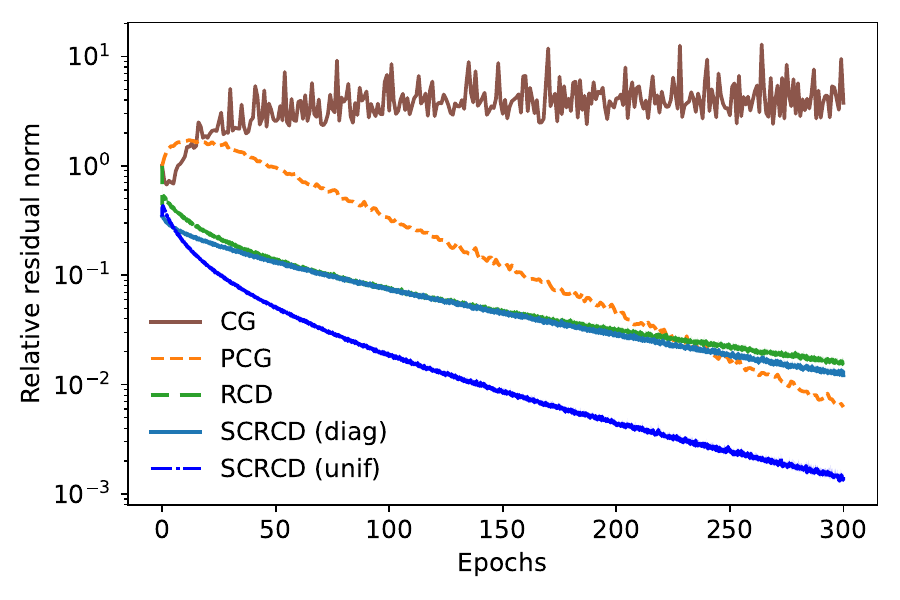}
    \includegraphics[width=0.495\linewidth, trim={0.2cm 1cm 0.3cm 0}, clip]{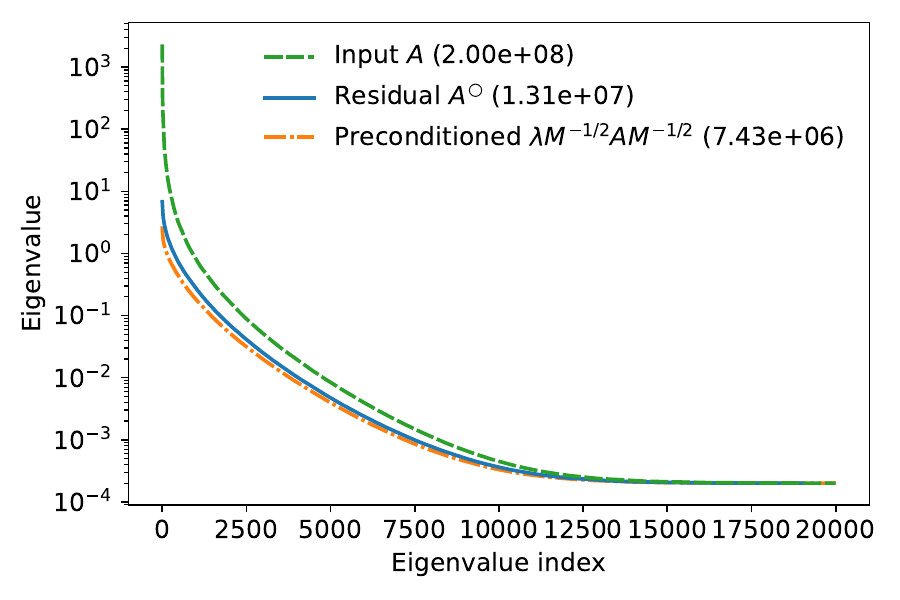}
    \includegraphics[width=0.495\linewidth, trim={0.2cm 1cm 0.3cm 0}, clip]{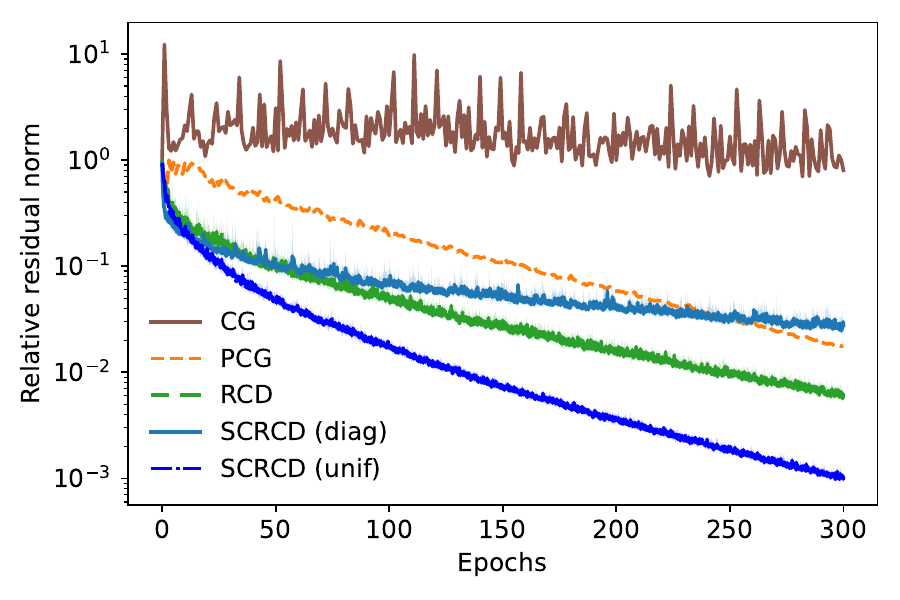}
    \includegraphics[width=0.495\linewidth, trim={0.2cm 1cm 0.3cm 0}, clip]{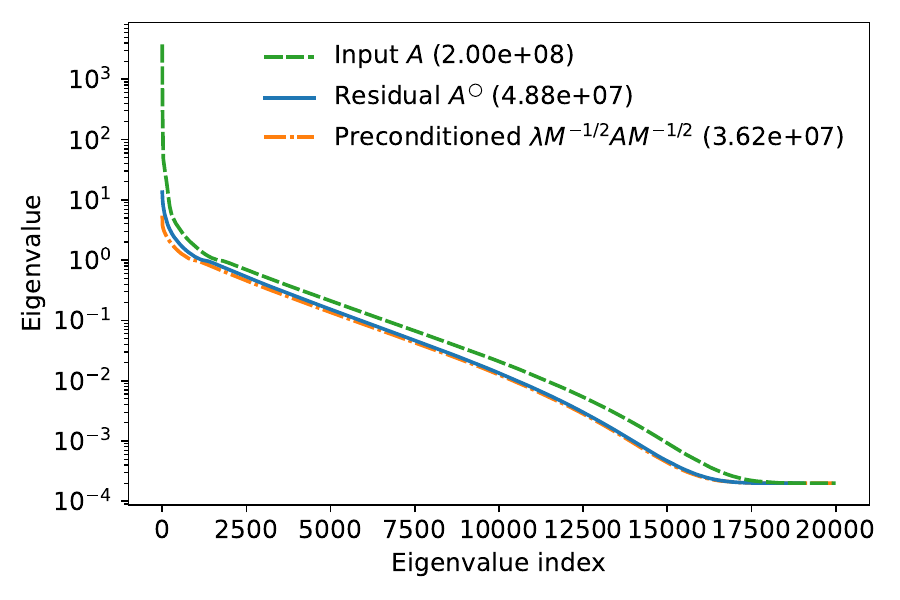}
    \includegraphics[width=0.495\linewidth, trim={0.2cm 1cm 0.3cm 0}, clip]{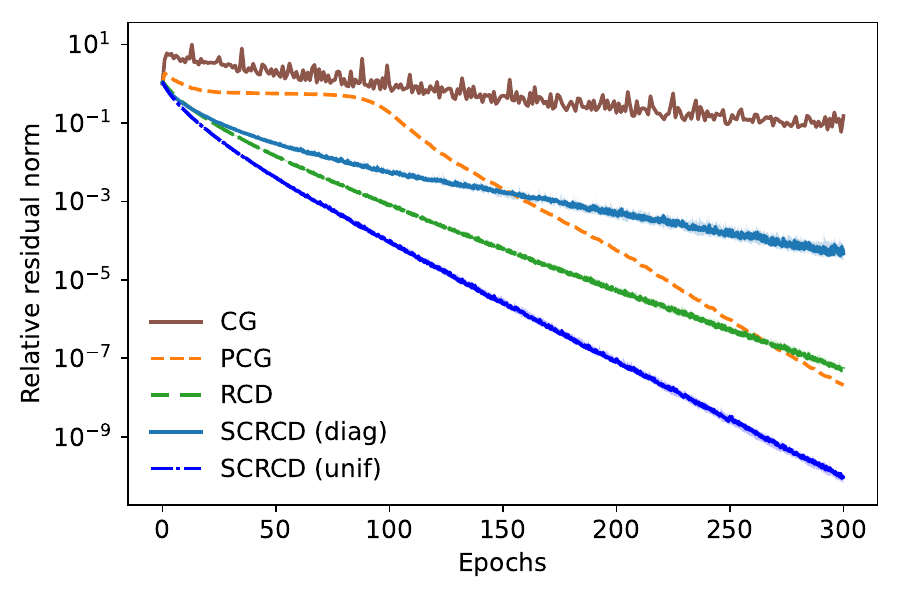}
    \includegraphics[width=0.495\linewidth, trim={0.2cm 1cm 0.3cm 0}, clip]{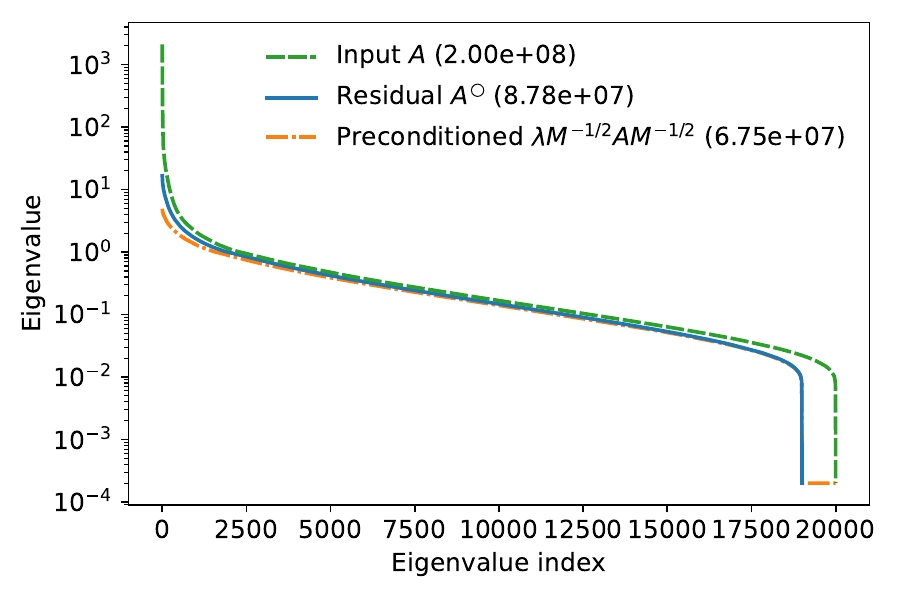}
    \includegraphics[width=0.495\linewidth, trim={0.2cm 0.3cm 0.3cm 0}, clip]{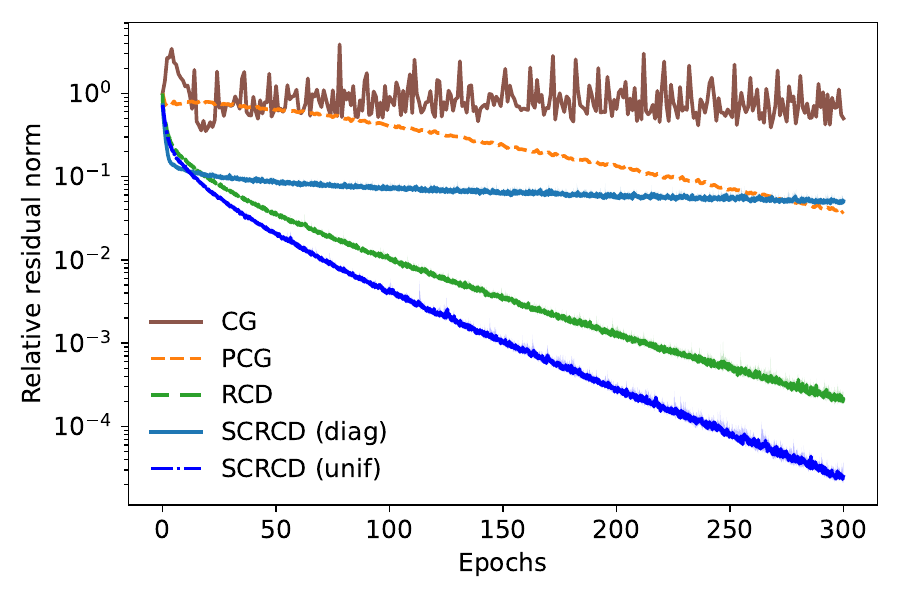}
    \includegraphics[width=0.495\linewidth, trim={0.2cm 0.3cm 0.3cm 0}, clip]{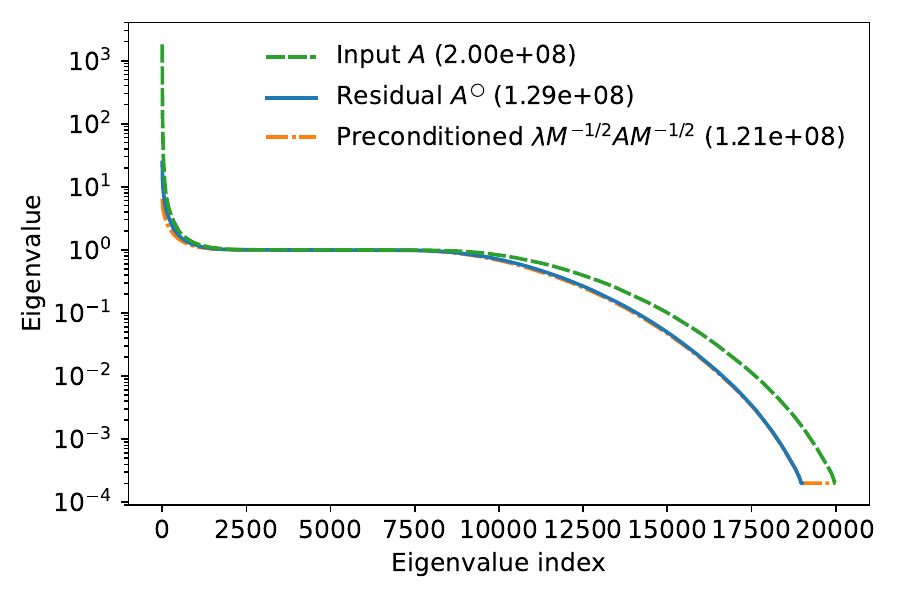}
    \vspace{-1\floatsep}
    \caption{
    For matrices with slower spectral decay, SC-RCD with uniformly sampled blocks typically works quite well:
    \textbf{(Left)} Convergence trajectories and \textbf{(right)} eigenvalues for the (i) \texttt{covtype.binary} $(p=54)$, (ii) \texttt{creditcard} $(p=29)$, (iii) \texttt{HIGGS} $(p=28)$, and (iv) \texttt{SensIT Vehicle} $(p=100)$ datasets.
    }
    \label{fig:app_numexp_slower}
\end{figure}

\end{appendix}

\end{document}